\title{A combination theorem for Anosov subgroups}
\author{Subhadip Dey \and Michael Kapovich \and Bernhard Leeb}
\date{May 18, 2018}
\newcommand{\Addresses}{{
  \bigskip
 
\noindent
  Subhadip Dey\\ \indent
  {\footnotesize
  \textsc{Department of Mathematics\\ \indent
   University of California, Davis\\ \indent
    One Shields Avenue, Davis, CA 95616}\par\nopagebreak
  \textit{E-mail address}: \texttt{sdey@math.ucdavis.edu}}

\medskip  
\noindent Michael Kapovich\\ \indent
  {\footnotesize
  \textsc{Department of Mathematics\\ \indent
   University of California, Davis\\ \indent
    One Shields Avenue, Davis, CA 95616}\par\nopagebreak
  \textit{E-mail address}: \texttt{kapovich@math.ucdavis.edu}}
  
  \medskip\noindent
  Bernhard Leeb\\ \indent
   {\footnotesize
  \textsc{Mathematisches Institut,\\ \indent
  Universita\"{t} M\"{u}nchen, \\ \indent
  Theresienstr. 39, D-80333 Mu\"{n}chen, Germany}\par\nopagebreak
  \textit{E-mail address}: \texttt{b.l@lmu.de}}
}}
\numberwithin{equation}{section}
\begin{document}
\maketitle

%========================
\newcommand{\C}[1]{\mathrm{Cay}\left(#1\right)}
\newcommand{\flag}[1]{\mathrm{Flag}\left(#1\right)}
\newcommand{\gen}[1]{\left\langle #1 \right\rangle}
\newcommand{\hausdist}[2]{d_\mathrm{Haus} \left({#1},{#2}\right)}
\newcommand{\isom}[1]{\mathrm{Isom}\left(#1\right)}
\newcommand{\isomid}[1]{\mathrm{Isom}_0\left(#1\right)}
\newcommand{\limflag}[1]{\Lambda_\tmod\left({#1}\right)}
\newcommand{\Int}[1]{\mathrm{int}\left(#1\right)}
\newcommand{\nbhd}[2]{\mathcal{N}_{#1}\left({#2}\right)}
\newcommand{\ost}[1]{\mathrm{ost}\left(#1\right)}
\newcommand{\st}[1]{\mathrm{st}\left(#1\right)}
\newcommand{\td}[1]{\diamondsuit_\Theta\left({#1}\right)}
\newcommand{\tdd}[1]{\diamondsuit_{\Theta'}\left({#1}\right)}
\newcommand{\tddd}[1]{\diamondsuit_{\Theta''}\left({#1}\right)}
\newcommand{\tdddd}[1]{\diamondsuit_{\Theta'''}\left({#1}\right)}
\newcommand{\tmd}[1]{\diamondsuit_{\tau_{\mathrm{mod}}}\left({#1}\right)}
\newcommand{\tst}[1]{\mathrm{st}_\Theta\left(#1\right)}
\newcommand{\ttst}[1]{\mathrm{st}_{\Theta'}\left(#1\right)}
\newcommand{\tttst}[1]{\mathrm{st}_{\Theta''}\left(#1\right)}
\newcommand{\ttttst}[1]{\mathrm{st}_{\Theta'''}\left(#1\right)}
\newcommand{\V}[1]{V\left({#1}, \mathrm{st}_\Theta(\tau_{\mathrm{mod}})\right)}
\newcommand{\VV}[1]{V\left({#1}, \mathrm{st}_{\Theta'}(\tau_{\mathrm{mod}})\right)}
\newcommand{\VVt}[1]{V\left({#1}, \mathrm{st}_{\Theta'}(\tau)\right)}
\renewcommand{\vector}[1]{\overrightarrow{#1}}
%========================
\def\amod{a_\mathrm{mod}}
\def\antiflag{\left(\flag{\tmod}\times \flag{\tmod}\right)^\mathrm{opp}}
\def\Antiflag{\left(\flag{\t_+}\times \flag{\t_-}\right)^\mathrm{opp}}
\def\bg{\bar{\gamma}}
\def\bgp{\bar{\gamma}'}
\def\dopp{d^\mathrm{opp}}
\def\cat0{\mathrm{CAT}(0)}
\def\catk{\mathrm{CAT}(k)}
\def\catn{\mathrm{CAT}(-1)}
\def\di{\partial_\infty}
\def\diam{\mathrm{diam}}
\def\el{extension lemma (Proposition \ref{extension})}
\def\ev{\mathrm{ev}_{\omega_0}}
\def\fmod{{F_{\mathrm{mod}}}}
\def\fuboun{\partial_{\mathrm{F\"u}}}
\def\G{\mathfrak{g}}
\def\Ghat{\widehat{\Gamma}}
\def\g{\gamma}
\def\H{\mathbb{H}}
\def\id{\mathrm{Id}}
\def\iinv{$\iota$-invariant, Weyl-convex, compact subset}
\def\m{\mathrm{mod}}
\def\ml{Morse lemma (Theorem \ref{morse_lemma})}
\def\P{(P_+,P_-)}
\def\pt{P(\tau_-,\tau_+)}
\def\r{\rightarrow}
\def\R{\mathbb{R}}
\def\rplus{\mathbb{R}_{\ge 0}}
\def\S{\mathcal{S}}
\def\smod{{\sigma_{\mathrm{mod}}}}
\def\t{\tau}
\def\T{\Theta}
\def\tedl{triangle inequality for the $\Delta$-lengths}
\def\tmod{{\tau_{\mathrm{mod}}}}
\def\tits{\partial_{\mathrm{Tits}}}
\def\titsangle{\angle_{\mathrm{Tits}}}
\def\X{\mathcal{X}}
%========================
\theoremstyle{plain}
\newtheorem{thm}{Theorem}[section]
\newtheorem{lem}[thm]{Lemma}
\newtheorem{prop}[thm]{Proposition}
\newtheorem{cor}[thm]{Corollary}
\newtheorem{conj}[thm]{Conjecture}
\newtheorem*{claim}{Claim}
\newtheorem{ques}[thm]{Question}
\newtheorem*{case1}{Case 1}
\newtheorem*{case2}{Case 2}
\newtheorem*{case3}{Case 3}
\newtheorem*{gte}{Triangle inequality for $\Delta$-valued distances}
\theoremstyle{definition}
\newtheorem{defn}[thm]{Definition}
\newtheorem{exmp}[thm]{Example}
\theoremstyle{remark}
\newtheorem*{rem}{Remark}
\newtheorem*{note}{Note}
\newtheorem*{case}{Case}
%========================

\begin{abstract}
We prove an analogue of Klein combination theorem for Anosov subgroups by using a local-to-global principle for Morse quasigeodesics.
\end{abstract}

\tableofcontents

\section{Introduction}

The combination theorems in geometric group theory provide tools to construct new groups with ``nice'' geometric properties out of old ones. The classical Klein combination theorem \cite{klein1883neue} states that under certain assumptions the group $\gen{\Gamma_1,\Gamma_2}$ generated by two Kleinian groups $\Gamma_1$ and $\Gamma_2$ is Kleinian, and is naturally isomorphic to the  free product $\Gamma_1 * \Gamma_2$. In a series of articles \cite{maskit1965klein,maskit1968klein,maskit1971klein,maskit1993klein}, Maskit generalized the Klein combination theorem to amalgamated free products and HNN extensions. 
These so called ``Klein-Maskit combination theorems'' have been generalized to the geometrically finite subgroups of the isometry groups of higher dimensional hyperbolic spaces by several mathematicians. For instance, in \cite{baker2008combination}, Baker and Cooper proved the following theorem.

\begin{thm}[Virtual amalgam theorem, \cite{baker2008combination}]\label{thm:virtualamalgam}
If $\Gamma_1$ and $\Gamma_2$ are two geometrically finite subgroups  of $\isom{\H^n}$ which have compatible parabolic subgroups, and if $H = \Gamma_1\cap \Gamma_2$ is separable in $\Gamma_1$ and $\Gamma_2$, then there exists finite index subgroups $\Gamma_1'$ and $\Gamma_2'$ of $\Gamma_1$ and $\Gamma_2$, respectively, containing $H$ such that the group $\gen{\Gamma_1',\Gamma_2'}$ generated by $\Gamma_1'$ and $\Gamma_2'$ is geometrically finite, and is naturally isomorphic to the  amalgam $\Gamma_1' *_H \Gamma_2'$.
\end{thm}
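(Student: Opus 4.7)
The plan is to combine the classical Klein--Maskit combination argument for amalgams with a separability-based selection of finite-index subgroups. I would work inside $\H^n$ with the convex cores $C_i$ of $\Gamma_i$; by geometric finiteness, each $\Gamma_i \backslash C_i$ has finite volume outside finitely many horoball cusp neighborhoods. The ``compatible parabolic subgroups'' hypothesis lets me arrange that, at every shared parabolic fixed point $p$, the stabilizers $\mathrm{Stab}_{\Gamma_1}(p)$ and $\mathrm{Stab}_{\Gamma_2}(p)$ admit a common finite-index subgroup inside $H$, so matching horoballs at $p$ can be erected on which the two groups coincide (after passing to finite-index subgroups). Away from shared cusps, parabolic fixed points of $\Gamma_1$ and $\Gamma_2$ are distinct, and one deletes small disjoint horoball neighborhoods there in the usual way.

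The heart of the argument is to use separability to upgrade this setup to a ping-pong. I would choose a convex $H$-invariant region $D \subset C_1 \cap C_2$ which contains a fundamental domain for $H$ in the non-cusp part and is cut out near cusps by the matching horoballs above. By properness of the actions, there are only finitely many double cosets $HgH \subset \Gamma_i$ (with $g \notin H$) whose representatives satisfy $gD \cap D \neq \emptyset$ in a manner that would violate the amalgam ping-pong. Pick one representative from each such ``bad'' double coset; separability of $H$ in $\Gamma_i$ then yields a finite-index subgroup $\Gamma_i' \leq \Gamma_i$ containing $H$ and avoiding all the bad representatives. The resulting $\Gamma_1', \Gamma_2'$ satisfy: for $\gamma \in \Gamma_i' \setminus H$, the translate $\gamma D$ meets $D$ only along the prescribed ``$H$-interface'' at shared cusps.

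With this ping-pong in place, a standard alternating-word calculation gives $\gen{\Gamma_1', \Gamma_2'} \cong \Gamma_1' *_H \Gamma_2'$: a reduced word $\gamma_1 \eta_1 \gamma_2 \eta_2 \cdots$ with $\gamma_j \in \Gamma_1' \setminus H$, $\eta_j \in \Gamma_2' \setminus H$ moves $D$ through a sequence of pairwise distinct regions and so cannot act trivially. For geometric finiteness of the amalgam, I would assemble a convex set $C'$ for $\gen{\Gamma_1', \Gamma_2'}$ by gluing $\Gamma_i'$-translates of ``half-spaces'' cut off by walls through $D$, using the Bass--Serre tree of the amalgam to organize the tessellation. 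The quotient $\gen{\Gamma_1', \Gamma_2'} \backslash C'$ then decomposes into pieces isometric to the non-cusp parts of $\Gamma_i' \backslash C_i$ glued along $H \backslash D$, plus finitely many cusps inherited from the factors (and merged along shared cusps), which gives geometric finiteness.

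The principal obstacle is the second step: converting the ``pointwise'' separability of $H$ into a ``global'' ping-pong condition while keeping control near cusps. In the parabolic setting, $H$-translates of $D$ accumulate at $H$-parabolic fixed points, so the finiteness of the ``bad'' double cosets is not automatic; one must invoke geometric finiteness of $\Gamma_i$ together with the parabolic compatibility hypothesis to rule out interference coming from parabolic elements of $\Gamma_i \setminus H$ that shadow parabolic elements of $H$. This delicate interplay between cusp topology, separability, and ping-pong is precisely what the compatible-parabolic hypothesis is designed to address, and is what the argument of \cite{baker2008combination} has to negotiate.
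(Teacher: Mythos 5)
This statement is quoted from Baker--Cooper \cite{baker2008combination} purely as motivating background in the introduction; the paper does not prove it, so there is no internal proof to compare against.

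Taken on its own terms, your sketch has the right overall shape for an amalgam-combination argument: choose a convex $H$-invariant interface region $D$, use separability of $H$ to drop to finite-index subgroups $\Gamma_i' \supseteq H$ that avoid a finite list of ``bad'' coset representatives, run a ping-pong along the Bass--Serre tree to get the amalgam presentation, and assemble a convex core for $\langle \Gamma_1', \Gamma_2' \rangle$ by gluing. However, the pivotal step is the one you flag yourself: the claim that only finitely many double cosets $HgH$ with $g \in \Gamma_i \setminus H$ yield bad overlaps $gD \cap D \neq \emptyset$. That finiteness is not a consequence of properness alone when cusps are present, because $H$-translates of $D$ accumulate on the parabolic fixed points of $H$ and parabolic elements of $\Gamma_i \setminus H$ shadowing those of $H$ can produce infinitely many interfering cosets. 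You acknowledge the issue and then point back to \cite{baker2008combination} to resolve it, which means the argument as written is conditional precisely at the place where the ``compatible parabolics'' hypothesis has to do real work. Until that finiteness is established --- e.g.\ by a careful analysis of which parabolics of $\Gamma_i$ can fix a point also fixed by a parabolic of $H$ and by matching horoballs so that the cusp contribution is absorbed into $H$ --- the separability step has no finite target set to avoid, and the ping-pong cannot be launched.

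It is also worth noting that Baker and Cooper's own route is not a group-theoretic ping-pong in the form you describe. They work with convex hyperbolic manifolds and prove a ``convex combination theorem'': after thickening, they use separability to pass to finite covers of the factor manifolds in which the convex core of $H$ embeds, then glue along that isometric copy and read off geometric finiteness from convexity of the glued object; the amalgam structure follows from van Kampen rather than from tracking translates of $D$. Your ping-pong formulation is closer in spirit to Gitik \cite{GITIK199965} and Mart\'inez-Pedroza \cite{pedroza2009}, and it could in principle be made to work, but the double-coset finiteness remains the substantive gap to fill.
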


When $\Gamma_1$ and $\Gamma_2$ intersect trivially, the ``compatibility condition'' in the above theorem simply means that the limit sets of $\Gamma_1$ and $\Gamma_2$ in $\di\H^n$ are disjoint. Since this case would be most relevant to our work, we state it separately.

\begin{cor}\label{thm:virtualamalgam2}
If $\Gamma_1$ and $\Gamma_2$ are two geometrically finite subgroups  of $\isom{\H^n}$ with disjoint limit sets in $\di\H^n$, then there exists finite index subgroups $\Gamma_1'$ and $\Gamma_2'$ of $\Gamma_1$ and $\Gamma_2$, respectively, such that the group $\gen{\Gamma_1',\Gamma_2'}$ generated by $\Gamma_1'$ and $\Gamma_2'$ is geometrically finite and is naturally isomorphic $\Gamma_1' * \Gamma_2'$.
\end{cor}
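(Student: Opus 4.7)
My plan is to reduce the corollary to Theorem \ref{thm:virtualamalgam} by passing to torsion-free finite index subgroups that already intersect trivially. Since each $\Gamma_i \le \isom{\H^n}$ is a finitely generated linear group, Selberg's lemma furnishes torsion-free subgroups $\widetilde{\Gamma}_i \le \Gamma_i$ of finite index. I would then show that $\widetilde{\Gamma}_1 \cap \widetilde{\Gamma}_2$ is trivial: the intersection is torsion-free, and any element $\gamma$ of infinite order in it would be loxodromic or parabolic, so its fixed points in $\di\H^n$ would lie in $\Lambda(\widetilde{\Gamma}_1)\cap\Lambda(\widetilde{\Gamma}_2) = \Lambda(\Gamma_1)\cap\Lambda(\Gamma_2) = \emptyset$, a contradiction.

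Next I would verify the hypotheses of Theorem \ref{thm:virtualamalgam} for the pair $(\widetilde{\Gamma}_1, \widetilde{\Gamma}_2)$ with $H = \{1\}$. Geometric finiteness is inherited by finite index subgroups, and limit sets are preserved, so they remain disjoint. Because the intersection is trivial, the compatibility condition on parabolic subgroups reduces to disjointness of limit sets, as noted by the authors in the paragraph preceding the corollary. The trivial subgroup is separable in $\widetilde{\Gamma}_i$ because finitely generated linear groups are residually finite by Mal'cev's theorem. Theorem \ref{thm:virtualamalgam} then supplies finite index subgroups $\Gamma_i' \le \widetilde{\Gamma}_i$ for which $\gen{\Gamma_1',\Gamma_2'}$ is geometrically finite and naturally isomorphic to $\Gamma_1' *_{\{1\}} \Gamma_2' = \Gamma_1' * \Gamma_2'$, and transitivity of finite index gives $\Gamma_i' \le \Gamma_i$ of finite index.

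The main point requiring verification is the equivalence between ``compatible parabolic subgroups'' and disjointness of limit sets in the trivial-intersection case, but the authors have already recorded this reduction explicitly. I therefore do not expect any step to present a genuine obstacle; the corollary should follow formally from Theorem \ref{thm:virtualamalgam} combined with Selberg's lemma and Mal'cev's theorem on residual finiteness of finitely generated linear groups.
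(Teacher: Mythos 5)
Your proof is correct and supplies details that the paper itself leaves implicit. The paper does not actually prove Corollary \ref{thm:virtualamalgam2}; it only remarks, in the paragraph before the statement, that when $\Gamma_1\cap\Gamma_2=\{1\}$ the compatibility hypothesis of Theorem \ref{thm:virtualamalgam} reduces to disjointness of limit sets, and then records the corollary without further argument. In particular, the paper never addresses the possibility that $\Gamma_1\cap\Gamma_2$ is a nontrivial finite group, which is all that disjointness of limit sets rules out a priori (an infinite intersection would have a nonempty limit set contained in $\Lambda(\Gamma_1)\cap\Lambda(\Gamma_2)$). Your Selberg's-lemma step, passing to torsion-free finite-index subgroups $\widetilde{\Gamma}_i$ so that the intersection is forced to be trivial, is exactly the missing reduction, and the rest of your verification (geometric finiteness and limit sets are preserved under passage to finite index; separability of $\{1\}$ is residual finiteness via Mal'cev) is sound. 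In short, this is the natural way to derive the corollary from Theorem \ref{thm:virtualamalgam}, and you have filled in the one genuine gap the paper's brief remark glosses over.
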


There are also certain generalizations of these combination theorems in the realm of subgroups of hyperbolic groups and, more generally, isometry groups of Gromov-hyperbolic spaces. In \cite{GITIK199965}, Gitik proved that under certain conditions two quasiconvex subgroups of a $\delta$-hyperbolic group ``can be virtually amalgamated.''  In this regard, our main theorem is an analogue of \cite[Corollary 3]{GITIK199965}. See also the papers by Mart\'inez-Pedroza \cite{pedroza2009} and Mart\'inez-Pedroza--Sisto \cite{MR2994828} for closely related results.

\medskip
In the present work, we prove a combination theorem for Anosov subgroups  of semisimple Lie groups.  {Anosov representations} of surface groups (and, more generally, fundamental groups of compact negatively curved manifolds) were introduced by Labourie \cite{labourie2006anosov} to study the ``Hitchin component'' of the space of reducible representations in $PSL(n,\R)$. 
Guichard and Weinhard \cite{guichard2012anosov} generalized this notion in the setting of representations of  hyperbolic groups in real semisimple Lie groups. Anosov subgroups can be regarded as higher rank generalizations of convex-cocompact subgroups of isometry groups of negatively curved symmetric spaces. 

Our main result presents an analogue of Corollary \ref{thm:virtualamalgam2} for Anosov subgroups. Before stating our theorem, we briefly discuss our framework. Let $G$ be a semisimple Lie group, let $P$ be a maximal parabolic subgroup conjugate to its opposite subgroups.

Our main result is the following.

\begin{thm}[Combination theorem] \label{thm:combination}
Let $\Gamma_1,\dots,\Gamma_n$ be pairwise antipodal, residually finite\footnote{It suffices to assume that each $\Gamma_i$ has trivial intersection with the center of $G$. See also the remark following Theorem \ref{main_result}.}
 $P$-Anosov subgroups of $G$. Then there exist finite index subgroups $\Gamma'_i$ of $\Gamma_i$, for $i=1,\dots,n$, such that the subgroup $\gen{\Gamma'_1,\dots,\Gamma'_n}$ generated by $\Gamma'_1,\dots,\Gamma'_n$ in $G$ is $P$-Anosov, and is naturally isomorphic to the  free product $\Gamma'_1*\dots*\Gamma'_n$.
\end{thm}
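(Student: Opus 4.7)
The plan is a higher-rank analogue of the classical ping-pong argument for Kleinian groups, with \emph{Morse quasigeodesics} in the symmetric space $\X = G/K$ playing the role of quasigeodesics in $\H^n$, and with the local-to-global principle for Morse quasigeodesics (mentioned in the abstract) replacing the classical Morse lemma. Recall that a subgroup of $G$ is $P$-Anosov precisely when any orbit map into $\X$ is a Morse quasi-isometric embedding, so it suffices to produce finite-index $\Gamma_i' \le \Gamma_i$ such that the natural map $\Gamma_1' * \cdots * \Gamma_n' \to G$ is injective and its orbit in $\X$ is Morse.

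First, using residual finiteness (or, as the footnote permits, triviality of intersection with $Z(G)$), I pass to finite-index subgroups $\Gamma_i' \le \Gamma_i$ in which every non-identity element has $\tmod$-Cartan projection of norm at least some large constant $L$, to be chosen at the end after the Morse constants of the $\Gamma_i$, the quantitative antipodality data of the $\limflag{\Gamma_i}$, and the thresholds in the local-to-global principle are fixed. Next, for a reduced word $w = g_1 \cdots g_k$ in $\Gamma_1' * \cdots * \Gamma_n'$ with consecutive letters in distinct factors, fix a basepoint $o \in \X$, set $o_j = (g_1 \cdots g_j) \cdot o$, and form the concatenated path $c_w$ whose $j$-th segment is the $(g_1 \cdots g_{j-1})$-translate of the Morse quasigeodesic in $\X$ representing $g_j \in \Gamma_{i(j)}$. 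Along each segment $c_w$ automatically satisfies uniform Morse conditions, inherited from the Anosov property of the individual $\Gamma_{i(j)}$.

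The heart of the proof is the local Morse condition at each transition vertex $o_j$. The incoming segment is asymptotic to an ideal $\tmod$-flag lying in $(g_1 \cdots g_{j-1}) \cdot \limflag{\Gamma_{i(j)}}$, while the outgoing segment is asymptotic to one in $(g_1 \cdots g_j) \cdot \limflag{\Gamma_{i(j+1)}}$. Pairwise antipodality of $\limflag{\Gamma_{i(j)}}$ and $\limflag{\Gamma_{i(j+1)}}$ is a compact-open condition on flag pairs, so the antipodality is quantitatively uniform. For $L$ chosen sufficiently large, each segment lies inside a narrow Weyl cone around its two endpoint flags (this is a quantitative ``narrowing'' property of Morse quasigeodesics of elements with large Cartan projection), so the turning at $o_j$ is realized by a genuinely antipodal pair of flags, yielding the required uniform local Morse condition. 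Applying the local-to-global principle then promotes $c_w$ to a global Morse quasigeodesic with constants independent of $w$. In particular distinct reduced words give distinct endpoints in $\X$, which simultaneously shows that $\Gamma_1' * \cdots * \Gamma_n' \to \gen{\Gamma_1', \dots, \Gamma_n'}$ is an isomorphism and that the resulting orbit map is a Morse quasi-isometric embedding; hence $\gen{\Gamma_1', \dots, \Gamma_n'}$ is $P$-Anosov.

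The main obstacle is the quantitative content of the previous paragraph: converting qualitative antipodality of the limit sets into a uniform local Morse condition at every joint. This requires combining compactness of each $\limflag{\Gamma_i}$, openness of antipodality in the product of flag manifolds, and a quantitative statement that Morse quasigeodesics of elements with large Cartan projection concentrate narrowly around their endpoint flags; only then is it legitimate to fix a single $L$ that works for the local-to-global principle uniformly in $w$. Once this quantitative bridge is in place, the Anosov characterization by Morse quasi-isometric orbit maps supplies the conclusion.
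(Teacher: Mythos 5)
Your high-level strategy matches the paper's: pass to Morse subgroups via the Morse $\Leftrightarrow$ Anosov equivalence, use residual finiteness to arrange large displacements, exploit compactness and antipodality of the limit sets, and invoke a local-to-global principle to upgrade to a global Morse quasigeodesic. However, the mechanism by which you obtain the ``local Morse condition'' at the transition vertices has a genuine gap, and it is exactly the point that the paper's midpoint construction (in the proof of Theorem \ref{main_result}) and the angle estimates of Section \ref{estimates} are designed to circumvent.

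You claim that because the incoming and outgoing segments at a transition vertex $o_j$ are asymptotic to antipodal flags $\tau_-\in\Lambda_{i(j)}'$ and $\tau_+\in\Lambda_{i(j+1)}'$, the ``turning'' at $o_j$ is straight. But the straightness condition required as input for the local-to-global principle (Theorem \ref{seq_morse}) is a \emph{visual} $\xi$-angle bound $\angle_{o_j}^\xi(o_{j-1},o_{j+1})\ge\pi-\epsilon$, i.e.\ $\angle_{o_j}(\xi_{\tau_-},\xi_{\tau_+})\ge\pi-\epsilon$. Antipodality of $\tau_\pm$ only gives Tits angle $\pi$; the visual angle at $o_j$ equals $\pi$ only when $o_j$ lies in the parallel set $P(\tau_-,\tau_+)$. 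In general $o_j$ (after translating to the basepoint $x$) lies merely within some fixed distance $D=D(\Lambda_i',\Lambda_j',x)$ of $P(\tau_-,\tau_+)$ (Corollary \ref{D}), and this distance does \emph{not} shrink as you increase the displacement threshold $L$. Consequently $\angle_{o_j}(\xi_{\tau_-},\xi_{\tau_+})$ is bounded below away from $0$, but is stuck bounded \emph{away from $\pi$} by an amount determined by $D$, independently of $L$. So enlarging $L$ does not give you uniform straightness at the vertices $o_j$, and your bridge to the local-to-global principle collapses.

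The paper sidesteps this obstruction by running the straight-sequence argument at the \emph{midpoints} $m_r$ of consecutive orbit points $p_{r-1},p_r$ rather than at the orbit points themselves. When $\|\Gamma_i'\|_x\ge S$, the midpoint sits at distance $\ge S/2$ inside the relevant Weyl cone, and Proposition \ref{tool} (Uniformly small visual angles, built on Propositions \ref{f}, \ref{small_angles_preffered} and Lemmas \ref{lem1}--\ref{lem3}) gives the quantitative statement you gestured at: as $S\to\infty$ the $\xi$-angle defect at the midpoint tends to $0$, so the midpoint sequence is $(\Theta',\epsilon)$-straight and $l$-spaced with $\epsilon\to 0$ and $l\to\infty$. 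Theorem \ref{seq_morse} then shows the piecewise \emph{geodesic} $p_0p_1\cdots p_{r_0}$ is uniformly Morse, and only afterwards does the paper pass to the actual concatenation of the $\Gamma_i'$-Morse quasigeodesics -- this second step uses the Replacement Lemma (Theorems \ref{replacements}, \ref{replacements2}), which your sketch bypasses. To repair your argument you would need either to reproduce the midpoint trick together with the angle estimates of Section \ref{estimates}, or supply an alternative quantitative lemma showing that for $L$ large the visual $\xi$-angle at $o_j$ itself tends to $\pi$, and the latter is false as stated.
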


The undefined term ``antipodal'' will be made precise later in the paper (Definition \ref{def:antipodal_morse}): This condition replaces the disjointness of the limit sets in Corollary \ref{thm:virtualamalgam2}. Moreover, the geometric finiteness in Corollary \ref{thm:virtualamalgam2} is replaced by the Anosov condition.

In fact, our combination theorem is a special case of a more geometric result (Theorem \ref{thm:general}), stated in terms of ``sufficiently high displacements'' and ``sufficient antipodality'' of the groups $\Gamma_i$ at a point $x\in X=G/K$; with this condition, there is no need to pass to finite index subgroups.

\medskip
Although we state main result (Theorem \ref{thm:combination}) in the language of Anosov representations, we never really use it in our proof. Instead, we use the language of \emph{Morse subgroups}, and prove an equivalent statement in this context (Theorem \ref{main_result}).

In \cite{Kapovich:2014aa}, Kapovich, Leeb and Porti introduced a class of discrete subgroups of isometries of higher rank symmetric spaces. This class of subgroups generalizes the convex cocompact subgroups in the rank one Lie groups. In \cite{Kapovich:2014aa} and in subsequent articles  \cite{Kapovich:2014ab,MR3736790,MR3720343}, they introduced and proved several equivalent definitions of this class, and studied their geometric properties (e.g. structural stability, cocompactness etc.). Some of these equivalent definitions are given in terms of RCA subgroups, URU subgroups, Morse subgroups, asymptotically embedded subgroups,  etc. In \cite{Kapovich:2014aa}, they also proved that the classes of Morse subgroups and Anosov subgroups are equal. 

\begin{thm}[Morse $\Leftrightarrow$ Anosov, \cite{Kapovich:2014aa}] \label{equiv_RCA} For a discrete subgroup $\Gamma$ of $G$, the following are equivalent.
\begin{enumerate}
\item $\Gamma$ is $P_{\tmod}$-Anosov.
\item $\Gamma$ is $\tmod$-Morse.
\end{enumerate}
\end{thm}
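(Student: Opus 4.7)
The plan is to establish the equivalence via a chain of intermediate characterizations introduced by Kapovich-Leeb-Porti: the RCA (regular, convergent, antipodal), URU (uniformly regular, undistorted), and asymptotically embedded conditions, each of which I would show is equivalent to both being $P_\tmod$-Anosov and being $\tmod$-Morse. The strategy is to close the loop: Anosov $\Rightarrow$ asymptotically embedded $+$ URU $\Rightarrow$ Morse $\Rightarrow$ Anosov.

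For the direction (1)$\Rightarrow$(2), I would first exploit the Gromov hyperbolicity of an Anosov subgroup $\Gamma$ together with the Anosov contraction/expansion dynamics on the flag bundle to produce a $\Gamma$-equivariant antipodal topological embedding $\xi\colon \partial_\infty \Gamma \hookrightarrow \flag{\tmod}$. The contraction along geodesic rays in $\Gamma$ translates, via the Cartan projection, into uniform $\tmod$-regularity of $\Gamma$-orbits in $X = G/K$: the $\Delta$-valued displacements $d_\Delta(x,\gamma x)$ drift uniformly into the interior of the face $\tmod$. Combined with the quasi-isometric embedding of $\Gamma$ into $X$ that follows from the same contraction, this gives the URU condition. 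Applying the \ml{} (the local-to-global principle for Morse quasigeodesics), I then conclude that every $\Gamma$-orbit map is a $\tmod$-Morse quasi-isometric embedding; together with $\xi$, this is exactly the $\tmod$-Morse property.

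For (2)$\Rightarrow$(1), I would reverse the process and reconstruct the Anosov structure from coarse data. The Morse condition directly yields that $\Gamma$ is word hyperbolic, that orbits are $\tmod$-regular and undistorted, and that ideal endpoints of Morse quasi-rays assemble into a continuous, equivariant, antipodal embedding $\partial_\infty \Gamma \hookrightarrow \flag{\tmod}$. The only nontrivial remaining ingredient is exponential contraction along geodesic rays in $\Gamma$. Uniform $\tmod$-regularity forces the Cartan projections along such rays to drift linearly into the interior of the face $\tmod$, and using the geometry of stars and diamonds in $X$ together with the dynamics of $G$ on $\flag{\tmod}$ near antipodal pairs, this linear drift upgrades to exponential contraction on the stable leaves of the associated flag bundle. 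This delivers the Anosov condition in the sense of Guichard-Wienhard.

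The hard part is the second direction, specifically converting the coarse-geometric Morse property into the sharp dynamical exponential-contraction statement that the Anosov definition demands. The Morse property only gives uniform linear drift of Cartan projections; promoting this to exponential bundle dynamics requires a careful analysis of the asymptotic geometry of flats, the dynamics on $\flag{\tmod}$ near the limit set, and effective local-to-global estimates for Morse quasigeodesics, which together constitute the technical core of the Kapovich-Leeb-Porti work.
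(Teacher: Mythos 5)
This statement is cited in the paper as a black box: it appears as ``Theorem (Morse $\Leftrightarrow$ Anosov, [KLPa])'' and no proof is given in the text, since the equivalence is one of the main results of Kapovich--Leeb--Porti's \emph{Morse actions of discrete groups on symmetric spaces} and of their follow-up papers. So there is no in-paper argument to compare your sketch against, and you should not attempt to reprove this result inside the present paper; the correct move is exactly the one the authors make, which is to cite it.

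That said, your outline of how the KLP proof goes is broadly accurate: they do close a loop through intermediate conditions (asymptotically embedded, RCA, URU), and the local-to-global principle for Morse quasigeodesics is indeed what promotes URU to Morse. One correction to your description of the ``hard part.'' You say the core difficulty in (2)$\Rightarrow$(1) is upgrading linear drift of Cartan projections to exponential contraction on the flag bundle. In the KLP framework this is not where the technical weight sits: one of their key observations is that for a word-hyperbolic group, once you have a continuous, equivariant, antipodal boundary embedding together with uniform regularity and undistortedness, the exponential estimates demanded by the Labourie/Guichard--Wienhard definition come essentially for free from the Gromov hyperbolicity of $\Gamma$ (linear drift along rays in a hyperbolic group already encodes exponential divergence). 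The genuine technical core is rather the higher-rank Morse lemma itself --- the local-to-global statement that uniformly regular uniform quasigeodesics track diamonds --- which is what makes URU, RCA, Morse, and asymptotically embedded all coincide. So your plan would work, but the emphasis is misplaced, and in any case the correct citation structure is to quote the theorem rather than reprove it.
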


See also \cite{Kapovich:2017aa} and \cite{survey} for detailed surveys on these results.

\medskip 
In  \cite[Theorem 7.40]{Kapovich:2014aa} they used the local-to-global principle for 
Morse quasigeodesics to construct (free) Morse-Schottky subgroups of semisimple Lie groups (cf. also \cite{Benoist}):

\begin{thm}
Suppose that $g_1,...,g_n$ are hyperbolic isometries of a symmetric space $X=G/K$ of noncompact type, whose repelling/attracting points in the flag-manifold $G/P_{\tau_{mod}}$ are pairwise antipodal. Then for all sufficiently large $N$, the subgroup of $G$ generated by $g_1^N,...,g_n^N$ is $\tau_{mod}$-Morse and free of rank $k$. 
\end{thm}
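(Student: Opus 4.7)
The plan is to prove the two conclusions of the theorem independently: freeness of $\gen{g_1^N,\dots,g_n^N}$ via a ping-pong argument in the flag manifold $G/P_{\tmod}$, and the $\tmod$-Morse property via the local-to-global principle applied to the orbit of a basepoint under freely reduced words. The antipodality hypothesis furnishes the dynamical setup for both.

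For the setup, fix a basepoint $x_0 \in X$ and let $\tau_i^{\pm} \in G/P_{\tmod}$ denote the attracting/repelling simplex of $g_i$; the pair $(\tau_i^+, \tau_i^-)$ is automatically antipodal since $g_i$ is regular hyperbolic. By hypothesis, all the $\tau_i^{\pm}$ are pairwise antipodal, so one can choose small open neighborhoods $U_i^{\pm}$ of $\tau_i^{\pm}$ which remain pairwise antipodal in the same sense (antipodality is an open condition on $G/P_{\tmod}$). By the North--South dynamics of $g_i$ on the flag manifold, for all sufficiently large $N$, $g_i^{\pm N}$ maps the complement of any fixed open neighborhood of $\tau_i^{\mp}$ into $U_i^{\pm}$; in particular $g_i^{\pm N} \bigl( \bigcup_{(j,\delta)\neq(i,\mp)} U_j^{\delta} \bigr) \subset U_i^{\pm}$, which is the standard ping-pong configuration and yields freeness of $\gen{g_1^N,\dots,g_n^N}$ of rank $n$.

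For the $\tmod$-Morse property, consider a freely reduced word $w = g_{i_1}^{N\epsilon_1} \cdots g_{i_m}^{N\epsilon_m}$ and build a piecewise path in $X$ from $x_0$ to $wx_0$ by concatenating orbit segments of the form $k \mapsto g_{i_1}^{N\epsilon_1} \cdots g_{i_{j-1}}^{N\epsilon_{j-1}} g_{i_j}^{k\epsilon_j} x_0$, for $0 \leq k \leq N$. Each block, being a translate of an orbit under a regular hyperbolic element with antipodal fixed simplices, is a Morse quasigeodesic with uniform Morse data depending only on $g_1,\dots,g_n$ and $x_0$. At the junction between the $j$-th and $(j+1)$-th block, the simplices pointing backward into the previous block and forward into the next block lie in $U_{i_j}^{\epsilon_j}$ and $U_{i_{j+1}}^{-\epsilon_{j+1}}$ respectively, and are therefore antipodal by our choice of neighborhoods. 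This delivers the local Morse hypothesis at each junction once $N$ is large enough. The local-to-global principle for Morse quasigeodesics then promotes the piecewise path to a global Morse quasigeodesic, uniformly in $w$, which is exactly the $\tmod$-Morse condition for the subgroup.

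The main obstacle is quantitative: one must pass from the asymptotic antipodality of fixed simplices at infinity to a finite-scale statement matching the input required by the local-to-global principle, choosing $N$ large enough that the flags tracked by the path near each junction are close enough to $\tau_{i_j}^{\pm}$ to satisfy the regularity and antipodality bounds at the scale $L$ demanded by the principle. Once this quantitative matching is in hand, the conclusion holds for all sufficiently large $N$ uniformly in the word, and the two parts combine to give the desired Morse-Schottky subgroup.
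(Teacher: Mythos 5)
Your two-pronged approach---Tits ping-pong in $G/P_{\tmod}$ for freeness and the local-to-global principle for the Morse property applied to the concatenated orbit path---is essentially the original proof of Kapovich--Leeb--Porti \cite[Theorem 7.40]{Kapovich:2014aa}, from which this statement is quoted. The present paper does not reprove it; it derives it as the cyclic special case of the combination theorem (Theorem \ref{main_result}), whose proof takes a genuinely different route. There, freeness is not argued separately by ping-pong but falls out once the orbit map is shown to be a Morse (hence quasiisometric) embedding of the abstract free product, so your ping-pong step, while valid, is redundant in that framework. More substantively, instead of applying the local-to-global principle directly to the concatenated path, the paper (i) forms the coarse broken geodesic through the midpoints $m_r$ of consecutive block-endpoint pairs, (ii) uses the visual angle estimates of Proposition \ref{tool} to show that $(m_r)$ is a uniformly $(\Theta',\epsilon)$-straight, $l$-spaced sequence, (iii) invokes the Morse lemma for straight spaced sequences (Theorem \ref{seq_morse}) to conclude that the broken geodesic is uniformly Morse, and (iv) only then replaces each coarse geodesic segment by the fine orbit path inside a block using the new replacement lemma (Theorems \ref{replacements}, \ref{replacements2}). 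What the paper's machinery buys is generality: the replacement lemma makes the argument work for arbitrary Morse subgroups in place of cyclic ones, where a block need not lie near any single geodesic. What your direct local-to-global route buys is economy in the cyclic case, where each block is a single hyperbolic orbit and the local hypothesis is immediate to articulate. The quantitative gap you flag at the end---converting asymptotic antipodality of the $\tau_i^\pm$ into a finite-scale straightness/alignment bound at each junction---is precisely the content of the angle estimates in section \ref{estimates} and is the technical crux of either proof; your outline correctly isolates it as the step that must be supplied.
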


While our main theorem contains this result as a special case 
when the subgroups $\Gamma_1,...,\Gamma_n$ are cyclic, our proof uses some of the main ideas of the proof of \cite[Theorem 7.40]{Kapovich:2014aa}. 

\subsection*{Organization of this paper}
In section \ref{sec:preliminaries}, we give a brief overview on symmetric spaces of noncompact type, $\Delta$-valued distances and the triangle inequalities, $\tmod$-regularities, parallel sets, $\xi$-angles, $\T$-cones, and $\T$-diamonds, mostly to set up our notations while leaving the details to the references. 
In section \ref{estimates}, we prove several estimates on $\xi$-angles which will provide crucial ingredients for construction of Morse embeddings in the proof of our main result. In section \ref{sec:morse}, or more specifically in \ref{stability} and \ref{sec:morsesubgroups},  we discuss Morse properties. In section \ref{sec:replacements}, we introduce the replacement lemma (Theorem \ref{replacements}, and a generalized version Theorem \ref{replacements2}) which is another important ingredient in the proof of our main result. In section \ref{sec:RF}, we discuss the residual finiteness property of Morse subgroups. Finally, in section \ref{sec:combination}, we state and prove our main result in terms of  Morse subgroups (Theorem \ref{main_result}).

\subsection*{Notations}
Here is a list of commonly used notations.
\begin{itemize}
\item $\angle^\xi_x(x_1,x_2)$ = $\xi$-angle between $\tmod$-regular segments $xx_1$ and $xx_2$ (see section \ref{sec:boundary})
\item $\td{x_1,x_2}$ = $\T$-diamond with tips at $x_1$ and $x_2$ (see section \ref{sec:parallelset})
\item $\iota$ = the opposition involution (see section \ref{sec:symm_noncompact})
\item $\nbhd{D}{\cdot}$ =  open $D$-neighborhood
\item $\ost{\tau}$ = open star of $\tau$ in the visual boundary (see section \ref{sec:parallelset})
\item $\st{\tau}$ = star of $\tau$ in the visual boundary (see section \ref{sec:parallelset})
\item $V(x, \tst{\tau})$ = $\T$-cone asymptotic to $\tau$ with tip at $x$ (see section \ref{sec:parallelset})
\end{itemize}

\medskip 
{\bf Acknowledgements.} The second author was partly supported by the NSF grant  DMS-16-04241, by 
KIAS (the Korea Institute for Advanced Study) through the KIAS scholar program, by a Simons Foundation Fellowship, grant number 391602, and by Max Plank Institute for Mathematics in Bonn. 

%========================
\section{Geometric background}\label{sec:preliminaries}
In this section, we first review some notions pertinent to geometry of  \emph{symmetric spaces of noncompact type}. A standard reference for this section is \cite{eberlein1996geometry}. Then we briefly review various notions such as ideal boundaries, Tits metrics, $\tmod$-regularity, $\T$-cones, $\T$-diamonds etc. enough to fix our notations and conventions. For a detailed exposition on these topics, we refer to \cite{Kapovich:2014aa,Kapovich:2014ab}.

\subsection{Symmetric spaces of non-compact type}\label{sec:symm_noncompact}

A \emph{(global) symmetric space} $X$ is a Riemannian manifold which has an \emph{inversion symmetry} about each point, i.e. for each point $x\in X$, there exists an isometric involution $s_x : X\rightarrow X$ fixing $x$, called the \emph{Cartan involution}, whose differential $ds_x$ restricts to $-\mathrm{Id}$ on the tangent space $T_x X$. In this paper we consider only simply connected symmetric spaces.

Each symmetric space has a de Rham decomposition into irreducible symmetric spaces. A symmetric space $X$ is said to be of \emph{noncompact type} if it is nonpositively curved, simply connected and without a Euclidean factor. 
Under these assumptions, $X$ is a \emph{Hadamard manifold}, and is diffeomorphic to a Euclidean space.

\medskip
A semisimple Lie algebra $\G$ is called \emph{compact} if its Killing form is negative definite. A semisimple Lie group $G$ is {compact} if and only if its Lie algebra is compact. $G$ is said to have \emph{no compact factors} or of \emph{noncompact type} if none of the factors of the direct sum decomposition of its Lie algebra $\G$ into simple Lie algebras is compact, and the decomposition has no commutative factors.

\medskip
Let $G$ be a semisimple Lie group with no compact factors and with a finite center, let $K$ be a maximal compact subgroup of $G$. Maximal compact subgroups of $G$ are conjugate to each other. The coset space $X = G/K$ can be given a natural $G$-invariant Riemannian metric with respect to which it becomes a symmetric space of noncompact type. Moreover, under our assumptions, 
$G$ is \emph{commensurable} with the isometry group of $X$, $\isom{X}$, in the sense that the homomorphism 
$G\rightarrow \isom{X}$ has finite kernel and cokernel.
The group $G$ acts on $X = G/K$ transitively, so $X$ is a homogeneous $G$-space.

In fact, any symmetric space of noncompact type arises as a quotient space as above. Let $X$ be a symmetric space of noncompact type, and let $\isomid{X}$ be the identity component of $\isom{X}$. Then $\isomid{X}$ is a semisimple Lie group with no compact factors and  trivial center. We can identify $X$ with the quotient $\isomid{X}/\isomid{X}_x$ where $\isomid{X}_x$ is the isotropy subgroup for some $x\in X$. 

\medskip
In the rest of this paper we reserve the letter $X$ to denote a symmetric space of noncompact type. We identify $X$ with $G/K$ where $G$ \
and $K$ are as above. 
More assumptions on $G$ will be made later on, see section \ref{sec:boundary}.

\medskip
A \emph{flat} in $X$ is a totally geodesic submanifold of zero sectional curvature. A flat is called \emph{maximal} if it is not properly contained in another flat. The group $G$ acts transitively on the set of all maximal flats; the dimension of a maximal flat is called the \emph{rank} of $X$.

A choice of a maximal flat will be called the \emph{model flat}, and will be denoted by $\fmod$. $\fmod$ is isometric to $\mathbb{E}^k$, where $k$ is the rank of $X$. The image of the subgroup $G_\fmod< G$ stabilizing the model flat in the group of isometric affine transformations $\isom{\fmod}$ under restriction homomorphism is a semidirect product $\mathbb{R}^k\rtimes W$. Here $W$, called the \emph{Weyl group}, is a (finite) group of isometries of $\fmod$ generated by reflections fixing a chosen base point (origin) $o_\m$. A fundamental domain for the action $W \curvearrowright \fmod$ is a certain convex cone with tip at $o_\m$, called the \emph{model Weyl chamber}, and will be denoted by $\Delta$.

\subsection{$\Delta$-valued distances}

Given any two points $x,y\in X$, the unique oriented geodesic segment from $x$ to $y$ will be denoted by $xy$. All geodesics considered in this paper are unit speed parametrized. We denote the distance between $x$ and $y$ by $d(x,y)$.

Each oriented segment $xy$ uniquely defines a vector $v$ in $\Delta$ which can be realized as follows.

Any geodesic segment $xy$ can be extended to a complete geodesic $f\subset X$ which is, in fact, a flat of dimension one. This geodesic $f$ is contained in a maximal flat $F$. There exists an isometry $g\in G$ sending $F$ to $\fmod$, $x$ to $o_\m$ and $y$ to $\Delta$. The vector $v\in \Delta$ is defined as the image $g(y)$; it is independent of the choice of $g$.  
This vector $v$ is called the \emph{$\Delta$-valued distance} from $x$ to $y$, and denoted by $d_\Delta(x,y)$. 

\medskip
It follows from our discussion that $d_\Delta(x,y)$ is a complete $G$-congruence invariant for an oriented segment $xy$ or an ordered pair $(x,y)$. Precisely, for two pairs of points $(x,y)$ and $(x',y')$, there exists $g\in G$ satisfying $(gx,gy) = (x',y')$ if and only if $d_\Delta(x,y) = d_\Delta(x',y').$

\medskip
The $\Delta$-valued distances satisfy a set of inequalities which are generalizations of the ordinary triangle inequality (see \cite{kapovich2009convex}). For our purpose, the following form will be sufficient.

\begin{gte}
For any triple of points $x,y,z \in X$,
\[ \| d_\Delta (x,y) - d_\Delta (x,z) \| \le d(y,z), \]
where $d_\Delta (x,y) - d_\Delta (x,z)$ is realized as a vector in $\fmod$ and $\| \cdot \|$ is the induced Euclidean norm.
\end{gte}

\subsection{Ideal boundaries and Tits buildings}\label{sec:boundary}

Two geodesic rays in $X$ are said to be \emph{asymptotic} if they are within a finite Hausdorff distance from each other. The \emph{ideal} or \emph{visual boundary} $\di X$ is the set of asymptotic classes of rays. Given $x \in X$ and an asymptotic class $\zeta$, the unique ray emanating from $x$ which is a member of the asymptotic class $\zeta$ will be denoted by $x\zeta$. For a fixed base point $x\in X$, the set $\di X$ can be metrized by the \emph{angle metric} $\angle_x$,
\[ \angle_x(\zeta_1,\zeta_2) = \text{angle between the rays } x\zeta_1 \text{ and } x\zeta_2.\]
The \emph{visual topology}  on $\di X$ induced by an angle metric $\angle_x$ is independent of the choice of a base point. In fact, $\di X$ is homeomorphic to $S^{n-1}$ where $n$ is the dimension of $X$.

\medskip
The natural \emph{Tits metric} on the ideal boundary $\di X$ can be defined as
\[ \titsangle (\zeta,\eta) = \sup_{x\in X} \angle_x(\zeta,\eta).\]
This metric defines \emph{Tits topology} on $\di X$ which is finer than the visual topology, and $\di X$ equipped with this topology is called the \emph{Tits boundary} of $X$ denoted by $\tits X$.

The Weyl group $W$ acts as a reflection group  
on the Tits boundary $\amod = \tits\fmod \cong S^{k-1}$,  
where $k$ is the rank of $X$. The pair $(\amod, W)$ is called the \emph{spherical Coxeter complex} associated with $X$. The quotient $\smod = \amod/W$ is called the \emph{spherical model Weyl chamber} which we identify as a fundamental chamber of $(\amod, W)$. Accordingly, we regard the model Weyl chamber $\Delta$ of $\fmod$ as a cone in $\fmod$ with tip at the origin $o_\m$ and ideal boundary $\smod$.

\medskip
The spherical Coxeter complex structure on $\amod$ induces a $G$-invariant spherical simplicial structure on $\tits X$. This simplicial complex, called the \emph{spherical} or \emph{Tits building} associated to $X$; we assume this building to be {\em thick}. 
The facets of this simplicial complex are called {\em chambers} in $\tits X$ and the ideal boundaries of maximal flats are \emph{apartments} 
 in $\tits X$.

Each chamber is naturally identified with the model chamber $\smod$ under the projection map (also called the \emph{type map})
\[ \theta : \tits X \rightarrow  \smod.\]
The type map is equivariant with respect to the isometric actions of $\isom{X}$ on $\tits X$ and $\smod$; hence, $G$ acts on $\smod$.  

\medskip 
 {\em From now on, we always assume that $G$ acts on the model chamber $\smod$ trivially}. In particular, $G$ preserves each de Rham factor of $X$ and the type map $\theta$ amounts to the quotient map $\tits X\to \tits X/G$.  

\medskip 
For an ideal point $\zeta$, $\zeta_\m =\theta({\zeta}) \in \smod$ is called the \emph{type} of $\zeta$.
Accordingly, for a face $\tau$ of a chamber $\sigma$, the face $\tmod = \theta({\tau})$ of $\smod$ is called the \emph{type} of $\tau$.

We denote the \emph{opposition involution} on $\smod$ by
\[\iota = -w_0,\]
where $w_0$ denotes the longest element in $W\curvearrowright \amod$.

Two simplices $\tau_1,\tau_2$ of $\tits X$ are called \emph{antipodal} or \emph{opposite} if there exists a point $x\in X$ such that $s_x(\tau_1) = \tau_2$, where $s_x$ is the Cartan involution with respect to $x$. Equivalently, two such simplices are contained in an apartment $a$ such that the antipodal map $-\id$ (induced by a Cartan involution) sends $\t_1$ to $\t_2$. Their types are related by $\theta(\t_1) = \iota \theta(\t_2)$.

\medskip
Throughout the paper, we will consider only $\iota$-invariant faces $\tmod$ of $\smod$. For every such face we pick one and for all 
a fixed point $\xi=\xi_\m$ of $\iota$ in the interior of $\tmod$. Then, for every simplex $\tau$ in $\tits X$ of type $\tmod$, we define 
a point $\xi_\tau\in \tau$ by 
$$
\{\xi_\tau\}= \theta^{-1}(\xi_\m)\cap \tau. 
$$

For a type (face) $\tmod$ of $\smod$ and a point $x\in X$, we define the \emph{$\xi$-angle} between two simplices $\t_1$ and $\t_2$ of type $\tmod$ with respect to $x$ by
\[ \angle^\xi_x (\t_1,\t_2) = \angle_x(\xi_{\t_1},\xi_{\t_2}).\]
Similarly, given $\tmod$-regular segments $xy_1, xy_2$ in $X$, we define the $\xi$-angle 
$$
 \angle^\xi_x (y_1, y_2):= \angle^\xi_x(\t_1,\t_2), 
$$
where $y_i\in V(x, \st{\t_i}), i=1, 2$.

\medskip 
The angular distance $\angle^\xi_x$ induces a \emph{visual topology} on the space of simplices of type $\tmod$. 
The group $G$ acts transitively on this space. The stabilizers $P_\tau$ of simplices $\tau \subset \tits X$ are called the \emph{parabolic subgroups} of $G$. After identifying  $\tmod$ with a simplex $\tau$ of type $\tmod$, the space of simplices of type $\tmod$,
\[ \flag{\tmod} = G/P_{\tmod},\]
is called the \emph{partial flag manifold}.
The topology of $\flag{\tmod}$ as a homogeneous $G$-space agrees with the visual topology.

\subsection{Parallel sets, regularity, cones and diamonds}\label{sec:parallelset}

We often denote a pair of antipodal simplices by $\t_+$ and $\t_-$. Let $\t_\pm$ be a pair of antipodal simplices of same type $\tmod$. Every such pair $\t_\pm$ is contained in a unique minimal\footnote{``Minimal'' means that the dimension of $S$ matches with the dimension of the cells $\t_\pm$.}
singular sphere $S \subset \di X$. The \emph{parallel set} of the pair $\t_\pm$ is defined to be the union of all flats in $X$ which are asymptotic to $S$, and denoted by $\pt$. Equivalently, $\pt$ is the union of all maximal flats $F$ whose ideal boundaries $\di F$ contain $\tau_\pm$. The parallel set $\pt$ is a nonpositively curved symmetric space with Euclidean de Rham factor.

\medskip
In the simplicial complex  $\tits X$, we define the \emph{star} $\st{\t}$, the \emph{open star} $\ost{\t}$ and the \emph{boundary} $\partial\st{\t}$ for a simplex $\t\in \tits X$ as
\[ \st{\t} = \text{minimal subcomplex of $\tits X$ consisting of simplices $\sigma \supset \t$,}\]
\[ \ost{\t} = \text{union of all open simplices whose closure intersects $\Int{\t}$},\]
\[ \partial\st{\t} = \st{\t} - \ost{\t}.\]
Accordingly, we denote the open star and boundary of the star of a model face $\tmod$ in the simplicial complex $\smod$ by   $\ost{\tmod}$ and $\partial\st{\tmod}$, respectively. Note that the simplicial map $\theta: \tits X \rightarrow \smod$ sends $\ost{\t}$ and $\partial\st{\t}$ to $\ost{\tmod}\subset \smod$ and $\partial\st{\tmod}= \smod - \ost{\tmod}$, 
respectively, where $\tmod$ is the type of $\t$.

For a subset $\T \subset\ost{\tmod}$, we define the \emph{$\tmod$-boundary} $\partial\T$ in the topological sense as a subset of $\ost{\tmod}$, where the topology is provided by the Tits metric. We define the interior $\Int{\T}$ of $\T$ as $\T - \partial \T$. If $\Theta$ is compact, then $\epsilon_0({\T}) := \titsangle({\partial\st{\t},\T})  > 0$. Moreover, if $\T'$ and $\T$ are two compact subsets of $\ost{\tmod}$ such that $\T \subset \Int{\T'}$, a scenario we will often consider in our paper, then $ \epsilon_0({\T},\T') := \titsangle(\T,\partial \T')>0$.

A subset $\T$ of $\smod$ is called \emph{$\tmod$-Weyl-convex} if its symmetrization $W_\tmod \T$ in $\amod$ is convex. Here $W_\tmod$ denotes the stabilizer of the face $\tmod$ for the action $W\curvearrowright \amod$. 
For  a ($\tmod$-)Weyl-convex subset $\T \subset \ost{\tmod}$, 
we define the $\T$-star of a simplex $\t \in\tits X$ as
\[  \tst{\t} = \theta^{-1}(\T) \cap \st{\t}.\]
The star $\st{\t}$ and $\T$-stars $\tst{\t}$ of a simplex $\t$ are convex subsets of $\tits X$ with respect to the Tits metric (see \cite{Kapovich:2014aa, MR3736790}). 

\medskip
Define the \emph{$\tmod$-regular} part of the ideal boundary as $\di^{\tmod\mathrm{-reg}} X = \theta^{-1} \ost{\tmod}$. 
An ideal point $\xi$ is called \emph{$\tmod$-regular} if $\xi \in \di^{\tmod\mathrm{-reg}}$.
Given $x\in X$ and $\xi\in\di X$, the geodesic ray $x\xi$ is called $\tmod$-regular if $\xi \in\ost{\tmod}$. A geodesic segment $xy$ is called \emph{$\tmod$-regular} if $xy$ can be extended to a $\tmod$-regular ray $x\xi$. For a Weyl-convex subset $\T\subset\ost{\tmod}$, in a similar fashion we define \emph{$\T$-regularities} for ideal points, rays and segments. Note that a segment $xy$ is $\tmod$-regular if and only if $yx$ is $\iota(\tmod)$-regular.

\medskip
Let $\tmod$ be an $\iota$-invariant face of $\smod$, and $\T$ is an \iinv\ of $\ost{\tmod}$.
Given a point $x\in X$ and a simplex $\t$ of type $\tmod$, the \emph{$\T$-cone} $V(x,\tst{\t})$ with tip $x$ is defined as the union of all $\T$-regular rays $x\xi$ asymptotic to $\st{\t}$. For a $\T$-regular segment $xy$, the $\T$-diamond $\td{x,y}$ is defined as
\[\td{x,y} = V(x,\tst{\t_+}) \cap V(y,\tst{\t_-})\subset \pt,\]
where $\t_\pm$ are unique (unless $x=y$) pair of antipodal simplices in $\flag{\tmod}$ such that $y\in V(x,\tst{\t_+})$ and $x\in V(y,\tst{\t_+})$. The cones and diamonds are convex subsets of $X$, see \cite{Kapovich:2014aa, MR3736790}.

%========================
\section{Visual angle estimates}\label{estimates}

The key result in this section is Proposition \ref{tool} which will be used in the proof of Theorem \ref{main_result} to construct \emph{Morse quasigeodesics} (see Definition \ref{MQG_def}). In the first section, we first obtain some weaker results which would lead to the estimates in Proposition \ref{tool} in the later section.

\medskip
In what follows, we always denote by $\tmod$ an $\iota$-invariant face of the model chamber $\smod$. The sets denoted by $\T,\T'$ etc. will always be \iinv\ of $\ost{\tmod}$. By $\xi_\m$ we denote an $\iota$-invariant point in the interior of $\tmod$.

\subsection{Small visual angles I}\label{estimates1}

Define the \emph{space of opposite simplices}
\[\X = \antiflag \underset{\mathrm{open}}{\subset} \flag{\tmod} \times \flag{\tmod},\]
which consists of all pairs of opposite simplices of $\flag{\tmod}$. This space has a transitive $G$-action which makes it a homogeneous $G$-space. The point stabilizer $H$ of this action is the intersection of two opposite parabolic subgroups of $G$.

\medskip
Throughout in this section $x$ will be a fixed point of $X$. For a point $\omega = (\tau_+,\tau_-) \in \X$, let $P(\omega)$ denote the parallel set $P(\tau_+,\tau_-)$. We define a function $\dopp_x : \X \rightarrow \rplus$ by
\[\dopp_x\left(\omega\right) = d\left(x, P(\omega)\right).\]

\begin{prop}\label{dopp}
The function $\dopp_x$ is continuous.
\end{prop}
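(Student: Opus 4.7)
The plan is to exploit the $G$-equivariance of the parallel set construction together with the homogeneous-space structure $\X = G/H$ given in the statement, which will reduce continuity of $\dopp_x$ to continuity of a $1$-Lipschitz distance function composed with the orbit map on $G$.

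First I would fix a base point $\omega_0=(\tau_+^0,\tau_-^0)\in\X$ and write $P_0:=P(\omega_0)$. Since $G$ acts on $X$ by isometries and preserves type, it permutes antipodal simplices of type $\tmod$; moreover, the unique minimal singular sphere containing an antipodal pair is carried to the corresponding sphere by $g\in G$, and hence the set of maximal flats whose ideal boundary contains that pair is mapped equivariantly. Thus $P(g\omega_0)=gP_0$ for every $g\in G$. Combined with transitivity of $G\curvearrowright\X$ this gives, for any $\omega=g\omega_0\in\X$,
\[
\dopp_x(g\omega_0)=d(x,gP_0)=d(g^{-1}x,P_0).
\]

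Next I would define $\Phi: G\to\rplus$ by $\Phi(g)=d(g^{-1}x,P_0)$. The orbit map $g\mapsto g^{-1}x$ is continuous (the $G$-action on $X$ is continuous), and the distance function $y\mapsto d(y,P_0)$ is $1$-Lipschitz on $X$, hence continuous. Therefore $\Phi$ is continuous. I would then check that $\Phi$ is constant on left cosets $gH$ where $H$ is the stabilizer of $\omega_0$: any $h\in H$ fixes $\omega_0$, so $hP_0=P(h\omega_0)=P_0$, and consequently
\[
\Phi(gh)=d(h^{-1}g^{-1}x,P_0)=d(g^{-1}x,hP_0)=d(g^{-1}x,P_0)=\Phi(g).
\]

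Thus $\Phi$ descends to a well-defined function $\widetilde\Phi$ on $G/H=\X$ that agrees with $\dopp_x$ by the identity in the first step. Since the quotient projection $G\to G/H$ is a continuous open surjection, continuity of $\Phi$ passes to $\widetilde\Phi=\dopp_x$, finishing the proof. There is no real obstacle: the only two geometric inputs are the isometric $G$-action (which yields both the equivariance $P(g\omega_0)=gP_0$ and the $H$-invariance of $P_0$) and the soft fact that distance to any closed subset is $1$-Lipschitz; everything else is formal continuity on a homogeneous space.
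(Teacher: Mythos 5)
Your argument is correct and follows essentially the same route as the paper: both reduce continuity of $\dopp_x$ to the identity $\dopp_x(g\omega_0)=d(g^{-1}x,P(\omega_0))$ via equivariance of the parallel-set construction, and then transfer continuity of the resulting function on $G$ down to $\X$. The only cosmetic difference is that you invoke the universal property of the quotient topology on $G/H$ directly, whereas the paper gets the same effect by choosing local sections of the fibration $H\to G\to \X$; both mechanisms are standard and interchangeable here.
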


\begin{proof} The proof is the same as of Lemma 2.21 of \cite{ MR3736790}. 
Fix a point $\omega_0\in \X$. From the fiber bundle theory, we have a fibration 
\[ H \rightarrow G \xrightarrow{~\ev~} \X,\]
where $H$ denotes the point stabilizer of the transitive $G$ action, and $\ev$ denotes the evaluation map $\ev(g) = g\cdot \omega_0$. See \cite[Sections 7.4, 7.5]{MR1688579}. For any $\omega \in \X$, there exists a neighborhood $U$ such that $\ev$ has a local section $\sigma$ over $U$,
\[ \sigma: U \rightarrow G, \quad \ev\circ s = \id_U. \]
It suffices to show that $\dopp_x$ is continuous on such neighborhoods $U$.

Define a function $d': X\times\X \rightarrow \rplus$ by $d'(x,\omega) = \dopp_x(\omega)$.
Note that the action of $G$ on $X\times \X$ given by $g(x,\omega) = (gx,g\omega)$ leaves $d'$ invariant. Therefore, on $U$,
\[ \dopp_x(\omega) = d'(x,\omega) = d'(x,\sigma(\omega) \omega_0) = d'(\sigma(\omega)^{-1}x, \omega_0) = d(s(\omega)^{-1}x, P(\omega_0)), \]
where the last function is continuous on $U$. Therefore, $\dopp_x$ is continuous on $U$.
\end{proof}

\begin{defn}[Antipodal subsets]
 A pair of subsets $\Lambda_1$, $\Lambda_2$  of $\flag{\tmod}$ is called \emph{antipodal}, if any simplex $\tau_1\in \Lambda_1$ is antipodal\footnote{See subsection \ref{sec:boundary} for the definition of antipodal simplices.} to any simplex $\tau_2\in \Lambda_2$ and vice versa. 
\end{defn}

Let $\Lambda_1$ and $\Lambda_2$ be a pair of compact, antipodal subsets of $\flag{\tmod}$. Then, $\Lambda_1\times\Lambda_2$ is a compact subset of $\X$.

Proposition \ref{dopp} implies:

\begin{cor}\label{D}
Let $\Lambda_1$ and $\Lambda_2$ be compact, antipodal subsets of $\flag{\tmod}$. If $\Lambda_1$ and $\Lambda_2$ are antipodal, then, for any point $x\in X$, there is a number $D = D(\Lambda_1,\Lambda_2,x)$ such that
\[d(x, P(\tau_1,\tau_2)) \leq D, \quad \forall\tau_1\in \Lambda_1, \forall\tau_2\in \Lambda_2.\]
\end{cor}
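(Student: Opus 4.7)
The plan is a direct application of Proposition \ref{dopp} combined with compactness. Since $\Lambda_1$ and $\Lambda_2$ are assumed to be mutually antipodal in $\flag{\tmod}$, every pair $(\tau_1,\tau_2)\in \Lambda_1\times\Lambda_2$ represents a pair of opposite simplices, so the product $\Lambda_1\times\Lambda_2$ is contained in the open subset $\X = \antiflag$ of $\flag{\tmod}\times\flag{\tmod}$. As noted just before the corollary, $\Lambda_1\times\Lambda_2$ is compact, being a product of compact sets.

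Next I would apply Proposition \ref{dopp}: the function $\dopp_x\colon \X \to \rplus$, $\omega\mapsto d(x,P(\omega))$, is continuous. Restricting this continuous function to the compact set $\Lambda_1\times\Lambda_2$ yields a continuous real-valued function on a compact space, which therefore attains its supremum. Define
\[
D = D(\Lambda_1,\Lambda_2,x) := \max_{(\tau_1,\tau_2)\in \Lambda_1\times\Lambda_2}\dopp_x(\tau_1,\tau_2).
\]
By construction, $d(x,P(\tau_1,\tau_2)) = \dopp_x(\tau_1,\tau_2) \le D$ for all $\tau_1\in\Lambda_1$ and $\tau_2\in\Lambda_2$, which is the desired bound.

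There is essentially no obstacle here; the only subtlety worth double-checking is that $\Lambda_1\times\Lambda_2$ really lies in $\X$ (so that $\dopp_x$ is defined on it), which follows immediately from the antipodality hypothesis since $\X$ consists precisely of pairs of opposite simplices. Everything else is a standard max-of-continuous-on-compact argument.
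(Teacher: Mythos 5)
Your proof is correct and is exactly the argument the paper intends: the text just before the corollary notes that $\Lambda_1\times\Lambda_2$ is a compact subset of $\X$, and the corollary is stated as an immediate consequence of the continuity of $\dopp_x$ from Proposition \ref{dopp}. Nothing is missing.
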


\begin{prop} \label{f}
Let $\Lambda_1,\Lambda_2 \subset \flag{\tmod}$ be compact, antipodal subsets. There exists a function 
$f = f(\Lambda_1,\Lambda_2,x):[0,\infty) \rightarrow [0,\pi]$ 
satisfying $f(R) \rightarrow 0$ as $R\rightarrow \infty$ such that 
for any $\tau_1\in \Lambda_1$, $\tau_2\in \Lambda_2$, and 
for any $z_1\in x\xi_{\tau_1}$, $z_2\in x\xi_{\tau_2}$ satisfying $d(z_1,x), d(z_2,x) \ge R$, 
we have
\[ 
\alpha_1 = \angle_{z_1} (x,z_2)\leq f(R), \quad 
\alpha_2 = \angle_{z_2} (x,z_1) \leq f(R).
\]
\end{prop}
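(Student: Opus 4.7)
The plan is to combine Corollary \ref{D} with $\cat0$ comparison, exploiting the fact that the parallel set $P(\tau_1,\tau_2)$ contains a geodesic line joining $\xi_{\tau_2}$ to $\xi_{\tau_1}$. First, Corollary \ref{D} furnishes a constant $D = D(\Lambda_1,\Lambda_2,x)$ with $d(x,P(\tau_1,\tau_2)) \le D$ for all $(\tau_1,\tau_2) \in \Lambda_1 \times \Lambda_2$. Let $p$ be the nearest-point projection of $x$ onto the closed convex subset $P(\tau_1,\tau_2) \subset X$, so $d(x,p) \le D$. Since $\tau_1,\tau_2$ are antipodal of the $\iota$-invariant type $\tmod$, the ideal points $\xi_{\tau_1},\xi_{\tau_2}$ are antipodal in $\di X$. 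Pick a maximal flat $F \subset P(\tau_1,\tau_2)$ through $p$ with $\tau_1,\tau_2 \subset \di F$; the line $\ell \subset F$ through $p$ with endpoints $\xi_{\tau_2}$ and $\xi_{\tau_1}$ is the required reference geodesic.

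Next I would set up the comparison picture. Write $s_i = d(x,z_i) \ge R$ and let $z_i' \in \ell$ be the point at distance $s_i$ from $p$ in the direction of $\xi_{\tau_i}$; since $z_1'$ and $z_2'$ lie on opposite sides of $p$ along $\ell$, $d(z_1',z_2') = s_1 + s_2$. The rays $x\xi_{\tau_i}$ and $p\xi_{\tau_i}$ are asymptotic, hence in the $\cat0$ space $X$ the function $t\mapsto d(x\xi_{\tau_i}(t),p\xi_{\tau_i}(t))$ is convex and bounded above by $d(x,p) \le D$; a bounded convex function on $[0,\infty)$ is nonincreasing, so $d(z_i,z_i') \le D$. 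The triangle inequality then gives $d(z_1,z_2) \ge s_1+s_2-2D$.

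Finally I would conclude by Euclidean comparison: $\cat0$ comparison for the geodesic triangle with vertices $x,z_1,z_2$ bounds $\alpha_1$ by the corresponding angle in a Euclidean triangle with side lengths $s_1,\,s_2,\,d(z_1,z_2)$, and the law of cosines combined with $d(z_1,z_2) \ge s_1+s_2-2D$ yields an estimate of the form
\[
1 - \cos \alpha_1 \;\le\; \frac{2Ds_2}{s_1(s_1+s_2-2D)},
\]
which tends to $0$ uniformly in $(\tau_1,\tau_2)$ as $s_1,s_2 \ge R \to \infty$; the same bound applies to $\alpha_2$ by symmetry. Taking $f(R)$ to be the worst such upper bound produces the desired function. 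I expect the main subtlety to lie in the uniformity of the constant $D$ across the family of antipodal pairs, but this is precisely the content of Corollary \ref{D}; everything remaining is a routine $\cat0$ calculation.
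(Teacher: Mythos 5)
Your proposal is correct and follows essentially the same route as the paper: project $x$ onto $P(\tau_1,\tau_2)$ via Corollary \ref{D}, use bounded convexity of the distance between asymptotic rays to get $d(z_i,z_i')\le D$, and finish with a $\cat0$ comparison. The only difference is in the last step: the paper drops a perpendicular from $x$ to the segment $z_1z_2$ (noting $d(x,z_1z_2)\le 2D$) and compares a right triangle to get $\alpha_i\le \sin^{-1}(2D/R)$, whereas you apply the law of cosines directly to the triangle $x z_1 z_2$ using $d(z_1,z_2)\ge s_1+s_2-2D$; both yield a bound tending to $0$ uniformly as $R\to\infty$.
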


\begin{proof}
Let $\bar{x}\in P(\tau_1,\tau_2)$ be the point closest to $x$. Since $\tau_1$ and $\tau_2$ are antipodal, $\angle^\xi_{\bar{x}}(\tau_1,\tau_2) = \pi$, i.e. $s_{\bar{x}} (\xi_{\tau_1}) = \xi_{\tau_2}$. Let $c : (-\infty,\infty) \rightarrow P(\tau_1,\tau_2)$, $c(0) = \bar{x}$, be the biinfinite geodesic passing through $\bar{x}$ and asymptotic to $c(+\infty) = \xi_{\tau_1}$ and $c(-\infty) = \xi_{\tau_2}$. For $i=1,2$, let $c_i : [0,\infty) \rightarrow X$ be the geodesic ray  $x\xi_{\tau_i}$ (see Figure \ref{fig:f}(a)). Since the functions $d(c(t), c_1(t))$ and $d(c(-t), c_2(t))$ are bounded convex functions, they are decreasing with maximum at $t=0$. Therefore,
\begin{equation}\label{H_dist}
d(c(t), c_1(t))\leq D, ~d(c(-t), c_2(t)) \leq D, \quad\forall t\in[0,\infty),
\end{equation}
where $D>0$ is a number as in Corollary \ref{D}.

\begin{figure}[h]
\begin{center}
\begin{tikzpicture}
    \node[anchor=south west,inner sep=0] (image) at (0,0,0) {\includegraphics[width=5in]{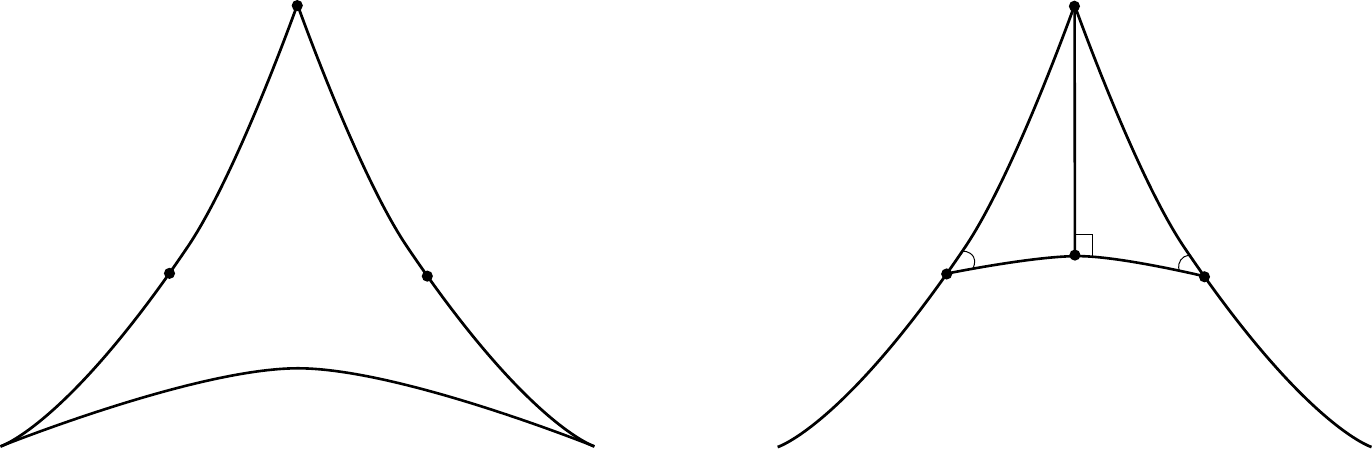}};
    \begin{scope}[x={(image.south east)},y={(image.north west)}]
        \node at (.234,1) {$x$};
        \node at (.799,1) {$x$};
        \node at (.1,.394) {$z_2$};
        \node at (.335,.389) {$z_1$};
        \node at (.663,.394) {$z_2$};
        \node at (.902,.389) {$z_1$};
        \node at (.785,.373) {$x'$};
        \node at (0.225,.127) {$\bar{x}$};
        \node at (.839,.45) {$\scriptstyle\alpha_1$};
        \node at (.728,.45) {$\scriptstyle\alpha_2$};
        \node at (.81,.57) {$\scriptstyle\leq 2D$};
        \node at (.841,.73) {$\scriptstyle\geq R$};
        \node at (.725,.73) {$\scriptstyle R \leq$};
        \node at (.55,0) {$\xi_{\tau_2}$};
        \node at (1.03,0) {$\xi_{\tau_1}$};
        \node at (-.015,0) {$\xi_{\tau_2}$};
        \node at (.46,0) {$\xi_{\tau_1}$};
        \node at (.22,-.19) {(a)};
        \node at (.78,-.19) {(b)};
        \node at (.149,.74) {$\scriptstyle c_2(t)$};
        \node at (.286,.74) {$\scriptstyle c_1(t)$};
        \node at (.3,.074) {$\scriptstyle c(t)$};
        \filldraw (0.225,.18) circle (1.37pt);
    \end{scope}
\end{tikzpicture}
\end{center}
\captionsetup{width=0.7\textwidth}
\caption{}
\label{fig:f}
\end{figure}

For $R\ge 0$, let $c_1(t_1) = z_1\in x\xi_{\tau_1}$ and $c_2(t_2) = z_2\in x\xi_{\tau_2}$ be any points satisfying $t_1 = d(z_1,x)\geq R$ and $t_2 = d(z_2,x) \ge R$. By (\ref{H_dist}), the Hausdorff distance between the segments $z_1z_2$ and $c(t_1)c(-t_2)$ is bounded above by $D$. Combining with $d(x,\bar{x}) \leq D$ we obtain
\begin{equation}\label{H_haus}
d(x, z_1z_2) \le 2D.
\end{equation}

Let $x'$ be the point on $z_1z_2$ nearest to $x$. When $R\ge 2D+1$, $x'$ is in the interior of $z_1z_2$. 
Consider geodesic triangles $\triangle_1 = \triangle (x,x',z_1)$ and $\triangle_2 = \triangle (x,x',z_2)$; 
the angle of $\triangle_1$ and $\triangle_2$ at the vertex $x'$ is $\pi/2$. 
Let $\alpha_1 =\angle_{z_1} (x,x')= \angle_{z_1} (x,z_2)$ and $\alpha_2 =\angle_{z_2} (x,x')= \angle_{z_2} (x,z_1)$ (see Figure \ref{fig:f}(b)). Let $\tilde{\triangle}_1$ and $\tilde{\triangle}_2$ be the Euclidean comparison triangles of $\triangle_1$ and $\triangle_2$, respectively; we denote the corresponding vertices of $\tilde{\triangle}_1$ and $\tilde{\triangle}_2$ by the same symbols. In the triangles $\tilde{\triangle}_1$, $\tilde{\triangle}_2$, since the angles at the vertex $x'$ are at least $\pi/2$, we have
\[
\tilde{\alpha}_i \leq \sin^{-1} \left( \frac{xx'}{xz_i}\right) 
\leq \sin^{-1} \left( \frac{2D}{R}\right),
\qquad i=1,2,
\]
where $\tilde{\alpha}_i$ denotes the angle corresponding to $\alpha_i$. The second inequality in above comes from (\ref{H_haus}). Since the triangles $\triangle_1$ and $\triangle_2$ are thinner than the triangles $\tilde{\triangle}_1$ and $\tilde{\triangle}_2$, respectively, we have $\alpha_i \leq \tilde{\alpha}_i$. Therefore, when $R\ge 2D+1$, $f(R)$ can be given by the following formula:
\[
f(R) = \sin^{-1} \left( \frac{2D}{R}\right).
\]

The domain of $f$ can be extended to $R< 2D+1$ continuously. However, the continuity of $f$ is irrelevant; we can simply set $f(R) = \pi$ for $R< 2D+1$.
\end{proof}

We also give a $\xi$-angle version of the proposition above which will be useful in the next section.

\begin{prop}\label{small_angles_preffered}
Let $\Lambda_1,\Lambda_2 \subset \flag{\tmod}$ be compact antipodal subsets. Given $\Theta\subset \ost{\tmod}$ containing $\xi_\m$ in its interior, there exists $R_0 = R_0(x,\Lambda_1,\Lambda_2,\Theta,\xi)$ such that 
for any $\tau_1\in \Lambda_1$, $\tau_2\in \Lambda_2$, and for any $z_1\in x\xi_{\tau_1}$, $z_2\in x\xi_{\tau_2}$ satisfying $d(z_1,x), d(z_2,x) \ge R_0$, the segment $z_1z_2$ is $\Theta$-regular.

Moreover, there exists a function 
$f_0 = f_0(x,\Lambda_1,\Lambda_2,\xi):[0,\infty) \rightarrow [0,\pi]$ 
satisfying $f_0(R) \rightarrow 0$ as $R\rightarrow \infty$ such that for any $\tau_1\in \Lambda_1$, $\tau_2\in \Lambda_2$, and for any $z_1\in x\xi_{\tau_1}$, $z_2\in x\xi_{\tau_2}$ satisfying $d(z_1,x), d(z_2,x) \ge R \ge R_0$, we have
\[ \angle^\xi_{z_1} (x,z_2), \angle^\xi_{z_2} (x,z_1) \leq f_0(R).\]
\end{prop}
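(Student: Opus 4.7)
The plan is to reduce both assertions to Proposition \ref{f} via a comparison of $\Delta$-valued distances. Keeping the notation of the proof of Proposition \ref{f}, let $\bar z_1=c(t_1)$ and $\bar z_2=c(-t_2)$ denote the nearest-point projections of $z_1,z_2$ onto the parallel set $P(\tau_1,\tau_2)$, so $d(z_i,\bar z_i)\le D$. The segment $\bar z_1\bar z_2$ runs along the geodesic $c$ in the direction of $\xi_{\tau_2}$, hence $d_\Delta(\bar z_1,\bar z_2)=(t_1+t_2)\,v_\xi$, where $v_\xi\in\Delta$ is the unit vector of type $\xi_\m$. Applying the triangle inequality for $\Delta$-valued distances twice and using $\iota$-invariance of the Euclidean norm on $\fmod$ gives
\[
\bigl\|d_\Delta(z_1,z_2)-(t_1+t_2)\,v_\xi\bigr\|\le 2D.
\]

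For the $\Theta$-regularity claim, this estimate forces the normalized vector $\theta(z_1z_2)\in\smod$ to converge to $\xi_\m$ at an explicit rate in $R$. Since $\xi_\m\in\Int{\Theta}$, one may pick $R_0$ so that $\theta(z_1z_2)\in\Theta$ for all $R\ge R_0$; in particular $z_1z_2$ is $\Theta$-regular.

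For the $\xi$-angle bound, the key observation is that the tangent at $z_1$ pointing toward $x$ has type $\iota(\xi_\m)=\xi_\m$, so the extension of the ray $z_1x$ to infinity is asymptotic to the $\xi$-point $\xi_{\tau_1'}$ of a unique simplex $\tau_1'$ of type $\tmod$, whence $\angle_{z_1}(x,\xi_{\tau_1'})=0$. By the first part, the extension of $z_1z_2$ is asymptotic to an ideal point $\zeta_2$ in the interior of a unique simplex $\tau_1''$ of type $\tmod$, and $\theta(\zeta_2)=\theta(z_1z_2)$. Both $\zeta_2$ and $\xi_{\tau_1''}$ lie in a common simplex, on which the Tits metric restricts to the spherical metric on $\tmod$, so their Tits angle equals $\epsilon(R):=d_{\tmod}(\theta(z_1z_2),\xi_\m)\to 0$. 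The triangle inequality for the angular metric at $z_1$ then yields
\[
\angle^\xi_{z_1}(x,z_2)=\angle_{z_1}(\xi_{\tau_1'},\xi_{\tau_1''})\le \angle_{z_1}(x,z_2)+\angle_{z_1}(z_2,\xi_{\tau_1''})\le f(R)+\epsilon(R),
\]
where $f$ is the function from Proposition \ref{f}. Setting $f_0:=f+\epsilon$ finishes the first bound; the estimate on $\angle^\xi_{z_2}(x,z_1)$ follows by the symmetric argument.

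The main subtlety is recognizing that the $\xi$-angle differs from the ordinary angle because the segment $z_1z_2$ need not be of type exactly $\xi_\m$; quantifying this discrepancy via the $\Delta$-valued triangle comparison with the ``model'' segment $\bar z_1\bar z_2$ on the parallel set is the crucial ingredient that both establishes $\Theta$-regularity and controls the correction term $\epsilon(R)$.
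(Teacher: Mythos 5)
Your proof is correct and follows essentially the same strategy as the paper's: bound the $\xi$-angle by the visual angle from Proposition~\ref{f} plus a ``type discrepancy'' term, then control that discrepancy via the triangle inequality for $\Delta$-valued distances. The differences are in the details of each step, and your version of the second step is arguably cleaner. For the $\Theta$-regularity, the paper compares $d_\Delta(x,z_1)$ with $d_\Delta(x',z_1)$, where $x'$ is the foot of the perpendicular from $x$ onto $z_1z_2$, exploiting that $d_\Delta(x,z_1)$ is \emph{exactly} $t_1 v_\xi$ since $z_1 \in x\xi_{\tau_1}$; you instead compare $d_\Delta(z_1,z_2)$ against $d_\Delta(\bar z_1,\bar z_2)=(t_1+t_2)v_\xi$ by projecting both endpoints to the parallel set. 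Both comparisons are valid and give the same qualitative estimate. For the $\xi$-angle bound, the paper introduces an auxiliary nested sequence $(\Theta_n)$ shrinking to $\{\xi\}$ and defines $f_0$ as a step function $f(R)+\alpha_n$ on the intervals $[R_0(\Theta_n),R_0(\Theta_{n+1}))$; your direct estimate $f_0=f+\epsilon$ with $\epsilon(R)=\titsangle(\theta(z_1z_2),\xi_\m)$ controlled explicitly by the displayed $\Delta$-inequality avoids that auxiliary family entirely, and your key identity $\angle^\xi_{z_1}(x,z_2)\le\angle_{z_1}(x,z_2)+\angle_{z_1}(z_2,\xi_{\tau_1''})$ is precisely the paper's inequality $\angle^\xi_{z_2}(x,z_1)\le\angle_{z_2}(x,z_1)+\angle_{o_{\mathrm{mod}}}(\xi,d_\Delta(z_2,z_1))$ with $1\leftrightarrow 2$. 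One small inaccuracy: $\bar z_i=c(\pm t_i)$ are not literally the nearest-point projections of $z_i$ onto $P(\tau_1,\tau_2)$, only points on the parallel set at distance $\le D$ from $z_i$ via the bounded-convexity estimate (2.1); since that $D$-bound is the only thing your argument needs, the conclusion is unaffected.
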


\begin{proof}
Let $\alpha = \min\left\{ \titsangle(\xi,\zeta)\mid \zeta\in\partial\Theta\right\}>0$. Using the \tedl, we get
\[
\left\| d_\Delta(x,z_1) - d_\Delta(x_1,z_1) \right\| \leq d(x,x_1),
\]
for any point $x_1\in X$. Specializing to $x_1 = x'$, the point on $z_1z_2$ closest to $x$, we obtain
\[ 
\left\| d_\Delta(x,z_1) - d_\Delta(x',z_1) \right\| \leq 2D.
\]
Then $x'z_1$ is $\Theta$-regular when $xz_1$ has length $\geq 2D/\sin\alpha$. Therefore, the constant $R_0$ can be given by
\begin{equation}\label{R_0}
R_0 = \frac{2D}{\sin\alpha}.
\end{equation}
This proves first part of the proposition.

\medskip
For the second part, let $\left(\Theta_n\right)_{n\in\mathbb{N}}$ be a nested sequence of $\iota$-invariant, Weyl-convex, compact subsets of $\ost{\tmod}$ such that $\xi$ is an interior point of each $\Theta_n$, and $\bigcap_{n=1}^\infty \Theta_n = \{\xi\}$. Let $\alpha_n$ be the Tits-distance from $\xi$ to the boundary of $\Theta_n$,
\[\alpha_n = \min\left\{ \titsangle(\xi,\zeta)\mid \zeta\in\partial\Theta_n\right\}>0.\]
Clearly, $\left(\alpha_n\right)_{n\in\mathbb{N}}$ is a strictly decreasing sequence converging to zero. This implies that $R_0(\Theta_n)$ is strictly increasing which diverges to infinity, where $R_0$ is as in (\ref{R_0}).
If $R_0(\Theta_n) \le d(x,z_1) < R_0(\Theta_{n+1})$, then the first part of the proposition  implies that $z_2z_1$ is $\Theta_n$-regular, which then implies
\begin{align*}
\angle^\xi_{z_2} (x,z_1)
&\le \angle_{z_2}(x,z_1) + \angle_{o_{\mathrm{mod}}} (\xi,d_\Delta(z_2,z_1))\\
&\le f(R) + \alpha_n,
\end{align*}
where the function $f$ is as in Proposition \ref{f}.
Therefore, when $R_0(\Theta_n) \le R < R_0(\Theta_{n+1})$, we may define 
\begin{equation*}
f_0(R) = f(R) + \alpha_n.
\end{equation*}
 As in the case of $f$ in Proposition \ref{f}, continuity of $f_0$ is irrelevant.
\end{proof}

\subsection{Small visual angles II}\label{estimates2}

The $\Theta$-cones (over a fixed simplex $\tau\in\flag{\tmod}$) vary continuously with their tips. Here, the topology on the set of $\Theta$-cones over a fixed simplex $\tau$ is given by their Hausdorff distances. Precisely, we have,

\begin{thm}[Uniform continuity of $\Theta$-cones, \cite{Kapovich:2014aa}]\label{hd}
The Hausdorff distance between two $\Theta$-cones over a fixed $\tau\in\flag{\tmod}$ is bounded by the distance between their tips,
\[\hausdist{V(x,\tst{\tau})}{V(\bar{x},\tst{\tau})} \leq d(x,\bar{x}).\]
\end{thm}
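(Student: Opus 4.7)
The plan is to establish the bound by producing, for each point $y$ in one cone, a nearby point $\bar y$ in the other, and then to invoke symmetry. Since $X$ is a Hadamard manifold (hence CAT(0)), every ideal point $\xi\in\di X$ is joined to any given base point by a unique geodesic ray. So a point $y\in V(x,\tst{\t})$ can be written uniquely as $y=\gamma_x(t)$ for $t=d(x,y)\ge 0$, where $\gamma_x$ is the ray from $x$ to some $\xi\in\tst{\t}$. The natural candidate for its partner is $\bar y=\gamma_{\bar x}(t)$, the point at parameter $t$ on the unique ray from $\bar x$ asymptotic to the \emph{same} ideal point $\xi$.

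First I would check that $\bar y$ actually lies in $V(\bar x,\tst{\t})$. This is immediate: the ray $\bar x\xi$ is asymptotic to $\xi\in\tst{\t}\subset \st{\t}$, and its type $\theta(\xi)$ lies in $\T$, so it is $\T$-regular by definition. Hence $\bar y\in V(\bar x,\tst{\t})$.

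Next I would estimate $d(y,\bar y)$. The function
\[
\varphi(t):=d\!\bigl(\gamma_x(t),\gamma_{\bar x}(t)\bigr)
\]
is convex on $[0,\infty)$ because $X$ is CAT(0), and it is bounded on $[0,\infty)$ because $\gamma_x$ and $\gamma_{\bar x}$ are asymptotic rays in a CAT(0) space (equivalently, they stay at bounded Hausdorff distance). A bounded convex function on a half-line is non-increasing, so
\[
d(y,\bar y)=\varphi(t)\le \varphi(0)=d(x,\bar x).
\]
Thus every point of $V(x,\tst{\t})$ has a point of $V(\bar x,\tst{\t})$ within distance $d(x,\bar x)$. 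Interchanging the roles of $x$ and $\bar x$ gives the same estimate in the opposite direction, which is precisely the claimed bound on the Hausdorff distance.

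I do not foresee a serious obstacle; the statement is essentially a quantitative version of the fact that asymptotic rays in a CAT(0) space are non-divergent. The only subtle point is making sure the partner point $\bar y$ still belongs to the cone, i.e.\ that $\T$-regularity is preserved when one changes the basepoint of the ray but keeps the ideal endpoint fixed — but this is tautological from the definition of $V(\,\cdot\,,\tst{\t})$ as the union of $\T$-regular rays into $\st{\t}$.
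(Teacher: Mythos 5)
Your proof is correct and is the standard argument. The paper itself cites this fact from Kapovich--Leeb--Porti without giving a proof, but the proof there is the same one you give: pair up points along rays to the same ideal point $\xi$, note that the new ray $\bar x\xi$ is still $\Theta$-regular (since $\Theta$-regularity depends only on the type of the ideal endpoint, not the basepoint), and then use that $t\mapsto d(\gamma_x(t),\gamma_{\bar x}(t))$ is a bounded convex function on $[0,\infty)$ and hence nonincreasing, so it is $\le d(x,\bar x)$ for all $t$. One tiny remark on the boundedness step: ``the Hausdorff distance between the two rays is finite'' does not literally give boundedness of the synchronously-parametrized distance $\varphi(t)$ without a short extra estimate relating the parameters; the cleaner version (and the one usually quoted, e.g.\ Bridson--Haefliger II.8.2 or Eberlein) proves the bound $d(\gamma_x(t),\gamma_{\bar x}(t))\le d(x,\bar x)$ directly by comparing with geodesics from $\bar x$ to $\gamma_x(n)$ and letting $n\to\infty$. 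Either route is fine; the substance of your argument matches the source.
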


Moreover, for diamonds, one also has the following form of uniform continuity. This will be useful in our paper, especially in the discussion of replacements (section \ref{sec:replacements}).

\begin{thm}[Uniform continuity of diamonds]\label{cd}
Given any $\Theta'$ with $\Int{\Theta'} \supset \Theta$, and any $\delta  > 0$, there exists $c = c(\Theta,\Theta',\delta)$ such that for all $\Theta$-regular segments $xy$ and $x'y'$ with $d(x,x') \le \delta$, $d(y,y') \le \delta$, we have
\[ \td{x,y} \subset \nbhd{c}{\tdd{x',y'}}.\]
\end{thm}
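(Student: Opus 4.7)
My plan is to prove Theorem \ref{cd} by combining the uniform continuity of cones (Theorem \ref{hd}) with the visual angle estimates from Proposition \ref{small_angles_preffered}, and splitting into two regimes depending on the size of $d(x,y)$.

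First, I would reduce to the case where $d(x,y)$ is bounded below by an explicit $R_0 = R_0(\Theta,\Theta',\delta)$. The diamonds $\td{x,y}$ and $\tdd{x',y'}$ are convex subsets of their respective parallel sets, and an elementary computation (using that cones have angular spread determined by $\Theta$ and $\Theta'$ respectively) shows their diameters are bounded by a constant times $d(x,y) + \delta$. Hence for $d(x,y) \le R_0$ both diamonds sit in a ball of bounded radius around $x$, and the constant $c$ can be taken to be this common diameter. This handles the delicate regime in which the simplices $\tau_\pm$ and $\tau'_\pm$ may differ significantly.

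Next, for $d(x,y) \ge R_0$, I would first apply Theorem \ref{hd} to relocate the tips at cost $\delta$: $V(x,\tst{\tau_+}) \subset \nbhd{\delta}{V(x',\tst{\tau_+})}$ and symmetrically for the $\tau_-$-cone. To then pass from the simplex $\tau_+$ to the perturbed simplex $\tau'_+$, observe that for $d(x,y)$ large, the estimates of Proposition \ref{small_angles_preffered} (applied with $\Lambda_i$ the singletons $\{\tau_\pm\}$) imply that the angular error at $x$ between the rays $x\xi_{\tau_+}$ and $x'\xi_{\tau'_+}$ can be made smaller than the Tits distance $\epsilon_0(\Theta,\Theta') = \titsangle(\Theta,\partial\Theta')$. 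Using a local section of the principal bundle $G \to \flag{\tmod}$ gives $g \in G$ near the identity with $g(\tau_+) = \tau'_+$, whence $V(x',\ttst{\tau'_+}) = g \cdot V(g^{-1}x',\ttst{\tau_+})$. Since $\tst{\tau_+} \subset \ttst{\tau_+}$ by the inclusion $\Theta \subset \Int{\Theta'}$, and $g$ is close to the identity on any fixed ball around $x'$, we obtain $V(x',\tst{\tau_+}) \subset \nbhd{c_1}{V(x',\ttst{\tau'_+})}$ within the region containing $\td{x,y}$.

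The main obstacle will be upgrading these two cone-level estimates into a single estimate for the intersection, that is, deducing
\[
\td{x,y} \subset \nbhd{c}{V(x',\ttst{\tau'_+}) \cap V(y',\ttst{\tau'_-})} = \nbhd{c}{\tdd{x',y'}}
\]
from $\td{x,y} \subset \nbhd{c_1}{V(x',\ttst{\tau'_+})}$ and $\td{x,y} \subset \nbhd{c_1}{V(y',\ttst{\tau'_-})}$, since in general a point close to each of two convex sets need not be close to their intersection. My approach here is to use that the parallel set $P(\tau'_-,\tau'_+)$ is close (on bounded balls) to $P(\tau_-,\tau_+)$, and to project a point $p \in \td{x,y}$ orthogonally into $P(\tau'_-,\tau'_+)$; the genuine angular margin $\Theta \subset \Int{\Theta'}$ ensures that this projection lies in both enlarged cones $V(x',\ttst{\tau'_+})$ and $V(y',\ttst{\tau'_-})$, hence in $\tdd{x',y'}$, while remaining within uniformly bounded distance of $p$.
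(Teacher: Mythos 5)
Your approach is genuinely different from the paper's. The paper reduces Theorem \ref{cd} to the Morse Lemma for higher-rank symmetric spaces (\cite[Theorem~5.16]{Kapovich:2014ab}): for any $z\in\td{x,y}$ the broken path $xz\star zy$ is a $(\Theta,0)$-regular $(L(\Theta),0)$-quasigeodesic, and the sharpened form of the Morse lemma (with nearby endpoints $x',y'$) puts $z$ in a uniform neighborhood of $\tdd{x',y'}$ immediately. Your plan instead works directly with cones, visual angles, and a local section of $G\to\flag{\tmod}$, which is more elementary in spirit, but as written it has a quantitative gap that would need to be closed.

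The gap is in the middle and final steps, and it is the same issue twice. When you pass from $\tau_+$ to $\tau'_+$ via a group element $g$ near the identity, you say that because ``$g$ is close to the identity on any fixed ball around $x'$'' one gets $V(x',\tst{\tau_+})\subset\nbhd{c_1}{V(x',\ttst{\tau'_+})}$ ``within the region containing $\td{x,y}$''. But the region containing $\td{x,y}$ is not a fixed ball: the diamond has diameter on the order of $d(x,y)$, which is unbounded. An element $g$ that is merely within some small $\epsilon$ of the identity can displace points at distance $R$ from $x'$ by roughly $\epsilon R$, which is unbounded as $d(x,y)\to\infty$ unless you know $\epsilon=O(\delta/d(x,y))$. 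You only claim that the angular discrepancy between $\tau_+$ and $\tau'_+$ is ``smaller than $\epsilon_0(\Theta,\Theta')$'', which is not strong enough; what is needed is a decay rate inversely proportional to $d(x,y)$. The same problem reappears in the last paragraph: ``$P(\tau'_-,\tau'_+)$ is close (on bounded balls) to $P(\tau_-,\tau_+)$'' does not control $d(p, P(\tau'_-,\tau'_+))$ for $p\in\td{x,y}$, since again $p$ can be at distance $\approx d(x,y)$ from $x$. Without a $O(1/d(x,y))$ estimate for the angular error between the simplices (and a corresponding Lipschitz-type bound for parallel sets at scale $d(x,y)$), neither the cone-containment step nor the intersection step yields a constant $c$ independent of $d(x,y)$. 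Your regime split and your observation that the problem is the passage from two cone-neighborhoods to the intersection are both sound, but the inverse-distance decay is the missing ingredient, and establishing it is not appreciably easier than invoking the Morse lemma as the paper does.
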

\begin{proof}
   We will prove this theorem as a corollary of  
\cite[Theorem 5.16]{Kapovich:2014ab}: For every $(\Theta,B)$-regular 
$(L,A)$-quasigeodesic $q: [a_-, a_+]\to X$ and points $x_\pm\in X$ within distance $\le B$ from  $q(a_\pm)$, 
the image of $q$ is contained in the $D(L,A,\Theta, B)$-neighborhood of  the diamond $\tmd{x_-, x_+}$.  

\begin{rem}
Using the hard theorem \cite[Theorem 5.16]{Kapovich:2014ab} in order to prove 
Theorem \ref{cd} is an overkill, but it is quicker than a direct argument.  We refer the reader to section \ref{sec:morse} for the definition 
of $(\Theta,B)$-regular quasigeodesics. 
\end{rem}
 
By appealing to the triangle inequalities for $\Delta$-length, one gets a slightly more precise statement, namely, there exists 
$D(L,A,\Theta, \Theta', B)$ such that the image of $q$ is contained in the 
$D(L,A,\Theta, \Theta', B)$-neighborhood of the diamond $\tdd{x_-, x_+}$. 

We observe that for every point $z\in \td{x, y}$ the broken geodesic segment 
$$
xz \star zy
$$
is $(L,0)$-quasigeodesic for some $L=L(\Theta)$,  and is $(\Theta, 0)$-regular. Hence, according to
the above sharpening of  \cite[Theorem 5.16]{Kapovich:2014ab}, 
the point $z$ belongs to the $c=D(L,0,\Theta,\Theta',\delta)$-neighborhood of the diamond $\tdd{x', y'}$, provided that
$$
d(x, x')\le \delta, d(y, y')\le \delta. 
$$
Thus,
\[
 \td{x,y} \subset \nbhd{c}{\tdd{x',y'}}.\qedhere 
\]
\end{proof} 

Now we turn to the main estimate in this section.

\begin{prop}[Uniformly small visual angles]\label{tool}
Let $\Lambda_1,\Lambda_2\subset\flag{\tmod}$ be compact, antipodal sets, and let $\Theta'$ be a subset of $\ost{\tmod}$ containing $\Theta$ in its interior.
Let $y_1 \in V(x,\tst{\tau_1})$ and $y_2\in V(x,\tst{\tau_2})$ be any points, where $\tau_1\in\Lambda_1$ and $\tau_2\in \Lambda_2$ are any simplices. Then,
\begin{enumerate}

\item \label{tool2}  There exists a constant $R_1 = R_1(x,\Lambda_1,\Lambda_2,\Theta',\Theta)$ such that $y_1y_2$ is $\Theta'$-regular if $d(x,y_i) \ge R_1$.

\item \label{tool3} There exists a function $f_1 = f_1(x,\Lambda_1,\Lambda_2,\Theta',\Theta,\xi): [0,\infty) \rightarrow [0,\pi]$ satisfying $\lim_{R\rightarrow \infty} f_1(R) = 0$ such that if $d(x,y_i) \ge R\ge R_1$, then
\begin{equation}\label{f_1}
\angle_{y_1}^\xi(x,y_2), \angle_{y_2}^\xi(x,y_1) \leq f_1(R).
\end{equation}

\end{enumerate}
\end{prop}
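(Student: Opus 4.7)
The plan is to reduce to geometry inside the parallel set $P(\tau_1,\tau_2)$, where the $\xi$-rays setup of Propositions~\ref{f} and~\ref{small_angles_preffered} can be imitated, and then transfer the conclusions back to $y_1,y_2$ via bounded perturbation. For each $y_i \in V(x,\tst{\tau_i})$, let $\eta_i \in \tst{\tau_i}\cap\theta^{-1}(\Theta)$ denote the ideal endpoint of the ray $xy_i$. Let $\bar x$ be the nearest point projection of $x$ onto $P(\tau_1,\tau_2)$, so that $d(x,\bar x) \le D$ for $D=D(x,\Lambda_1,\Lambda_2)$ from Corollary~\ref{D}. Let $\bar y_i$ be the point at distance $r_i = d(x,y_i)$ from $\bar x$ along the asymptotic parallel ray $\bar x\eta_i \subset P(\tau_1,\tau_2)$; convexity of the distance between asymptotic rays gives $d(y_i,\bar y_i) \le D$, and $\bar y_i \in V(\bar x,\tst{\tau_i})$.

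For Part~(1), I would combine the triangle inequality for $\Delta$-length,
\[\|d_\Delta(y_1,y_2) - d_\Delta(\bar y_1,\bar y_2)\| \le 2D,\]
with the following observation inside the parallel set: using the product structure $P(\tau_1,\tau_2)=E\times Y$ and the Weyl-convexity of $\Theta$, the normalized $\Delta$-direction of $\bar y_1\bar y_2$ lies in $\Theta$. Indeed, the $E$-component of the tangent vector to $\bar y_1\bar y_2$ at $\bar y_1$ is a positive combination of two vectors in the $\tau_2$-Weyl sector whose types lie in the $\iota$-invariant convex set $\Theta$, so its normalized type remains in $\Theta$. Since $d(y_1,y_2) \ge r_1+r_2-O(1)$ grows linearly in $R$, the normalized $\Delta$-direction of $y_1y_2$ remains in the larger open set $\Theta'$ provided $R \ge R_1 = R_1(x,\Lambda_1,\Lambda_2,\Theta',\Theta)$ is sufficiently large.

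For Part~(2), first observe that inside the parallel set, the ideal endpoint of $\bar y_1\bar x$ extended equals $-\eta_1 \in \tst{\tau_2}\cap\theta^{-1}(\Theta)$ (by $\iota$-invariance of $\Theta$), while the ideal endpoint of $\bar y_1\bar y_2$ extended lies in $\tst{\tau_2}\cap\theta^{-1}(\Theta)$ by the same convex-combination argument as in Part~(1). Hence both $\tmod$-directions at $\bar y_1$ equal $\tau_2$, and $\angle^\xi_{\bar y_1}(\bar x,\bar y_2) = 0$. Because $\Theta$ is compactly contained in $\ost{\tmod}$, there is a uniform positive lower bound $\epsilon_0(\Theta) > 0$ on the Tits distance from $\theta^{-1}(\Theta)$ to $\partial\st{\tmod}$. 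The spatial perturbations $d(y_i,\bar y_i), d(x,\bar x) \le D$ induce angular changes of order $D/R$ for the relevant ideal endpoints in the visual topology on $\di X$, so for $R \ge R_1$ taken sufficiently large these changes are below $\epsilon_0(\Theta)$ and the ideal endpoints remain in chambers containing $\tau_2$. Hence the $\tmod$-directions at $y_1$ are also equal to $\tau_2$, giving $\angle^\xi_{y_1}(x,y_2) = 0$; the symmetric argument yields $\angle^\xi_{y_2}(x,y_1) = 0$, so one may take $f_1(R) = 0$ for $R \ge R_1$.

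The main technical obstacle is verifying, inside the parallel set, that the normalized $\Delta$-direction of $\bar y_1\bar y_2$ lies in $\Theta$ (rather than only in the Weyl-saturated set $W_\tmod\Theta$). This requires combining the product decomposition $P(\tau_1,\tau_2)=E\times Y$ with the Weyl-convexity hypothesis on $\Theta$ to control the type of the $E$-component of the tangent vector after positive combinations from antipodal Weyl sectors; a secondary difficulty is quantifying the $D/R$ angular perturbation bound of the ideal endpoints in a way that is uniform over $\tau_i\in\Lambda_i$ and $\eta_i$ varying in the compact set $\tst{\tau_i}\cap\theta^{-1}(\Theta)$.
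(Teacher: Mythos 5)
Part (1) is roughly on track and parallels the paper: both approaches project to the parallel set, bound the perturbation by $D$, and use the triangle inequality for $\Delta$-lengths to transfer $\Theta$-regularity from $\bar y_1\bar y_2$ to $y_1y_2$. Your argument that $\bar y_1\bar y_2$ is $\Theta$-regular via the Euclidean factor of the parallel set is vague (the $\Delta$-type of a segment is not determined by its $E$-component); the cleaner route is to note that $\bar x,\bar y_1,\bar y_2$ lie in a single maximal flat and invoke convexity of the $\Theta$-cone $V(\bar y_1,\mathrm{st}_\Theta(\tau_2))$, but this is fixable and not the main issue.

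Part (2) contains a genuine error. You conclude that $\angle^\xi_{y_1}(x,y_2)=0$ for $R$ large, so that $f_1(R)=0$ eventually. This is false: once $y_1,y_2,x$ are moved off the parallel set $P(\tau_1,\tau_2)$, the simplices $\tau'$ with $x\in V(y_1,\st{\tau'})$ and $\tau''$ with $y_2\in V(y_1,\st{\tau''})$ are generically two \emph{distinct} elements of $\flag{\tmod}$ near $\tau_2$, and $\angle^\xi_{y_1}(\tau',\tau'')>0$. The step that goes wrong is the deduction that ``the ideal endpoints remain in chambers containing $\tau_2$.'' Being within a small visual angle of a point of $\ost{\tau_2}\cap\theta^{-1}(\Theta)$ does \emph{not} force an ideal point into $\st{\tau_2}$: the constant $\epsilon_0(\Theta)$ measures a distance of \emph{types} in $\smod$, whereas the $\tmod$-direction of a ray is a point of $\flag{\tmod}$ that moves continuously under perturbation of $y_1$ and $y_2$, and equality $\tau'=\tau''$ is a codimension-$\dim\flag{\tmod}$ coincidence. (Already in $\H^2\times\H^2$ with $\tmod=\smod$, moving $x,y_1,y_2$ slightly off the flat $P(\tau_1,\tau_2)$ produces two nearby but unequal chambers.) The correct statement is only that $\angle^\xi_{y_1}(x,y_2)\to 0$, and a quantitative bound is required. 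The paper obtains this by (i) using the expansion estimate \cite[Lemma 2.44(ii)]{Kapovich:2014aa} to propagate a small $\xi$-angle at the cone tip to a small $\xi$-angle far into the cone, (ii) moving along the ray $x\xi_{\tau_1}$ to a comparison point $z_1$ via an auxiliary inclusion of cones, and (iii) a triangle inequality $\angle^\xi_{y_1}(x,y_2)\le\angle^\xi_{y_1}(x,\tau_2)+\angle^\xi_{y_1}(y_2,\tau_2)$. None of this machinery appears in your proposal, and without it there is no mechanism to control the wobble of the $\tmod$-direction under the $O(D)$-perturbation.
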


\begin{proof}
For part \ref{tool2}, we take an approach similar to the one given in the proof of Proposition \ref{small_angles_preffered}. Let $\bar{x}$ be the nearest point projection of $x$ into the parallel set $P(\tau_1,\tau_2)$, and for each $i=1,2$ let $\bar{y}_i$ denote the nearest point projection of $y_i$ into $V(\bar{x},\tst{\tau_i}) \subset P(\tau_1,\tau_2)$.
Let $\alpha =  \titsangle(\Theta,\partial\Theta') >0$, and $\alpha' = \titsangle(\Theta,\partial\st{\tmod}) \ge \alpha$. Finally, let $D = D(\Lambda_1,\Lambda_2,x)$ be the constant given by Corollary \ref{D}. 

Since $d(x,\bar{x}) \le D$, we combine this with Theorem \ref{hd} to get
\[ d(y_i,\bar{y}_i) \le D, \quad i=1,2.\]
Then, using the triangle inequality for $\Delta$-lengths we deduce
\begin{multline*}
\left\| d_\Delta(y_1,y_2) - d_\Delta(\bar{y}_1, \bar{y}_2) \right\| \leq \left\| d_\Delta(y_1,y_2) - d_\Delta(\bar{y}_1,y_2) \right\| + \left\| d_\Delta(\bar{y}_1,y_2) - d_\Delta(\bar{y}_1, \bar{y}_2) \right\| \\
 \le d(y_1,\bar{y}_1) + d(y_2,\bar{y}_2) \le 2D.
\end{multline*}
Since $\bar{y}_1\bar{y}_2$ is $\Theta$-regular, $y_1y_2$ is $\Theta'$-regular whenever $y_1y_2$ has length $\ge 2D/\sin\alpha$.
Moreover,
\begin{multline}\label{4D}
d(y_1,y_2) \ge d(\bar{y}_1,\bar{y}_2)-2D \ge d(\bar{y}_i,\bar{x})\sin\alpha' -2D \ge (d(y_i,x)-2D)\sin\alpha -2D \\
= d(y_i,x)\sin\alpha -2D(1+\sin\alpha),
\end{multline}
where the second inequality comes from triangle comparisons.
 Using (\ref{4D}), we obtain: $d(y_1,y_2) \ge 2D/\sin\alpha$ whenever $d(x,y_1)$ or $d(x,y_2)$ is greater than $2D(1/\sin^2\alpha+1/\sin\alpha+1)$. We may set
\[ R_1 = 2D\left(1+\frac{1}{\sin\alpha} + \frac{1}{\sin^2\alpha}\right).\] 
This proves part \ref{tool2}.

\medskip
To prove part \ref{tool3} we need the following lemmas.

Recall that $s_x : X \r X$ denotes the Cartan involution of $X$ fixing $x$.

\begin{lem}\label{lem1}
Let $\tau,\tau'\in\flag{\tmod}$ be a pair of simplices, let $x\in X$ be any point, and let $y\in V(x,\tst{s_x\tau})$ be a point satisfying  $d(x,y) \ge l$. 
For sufficiently small $\epsilon$, $\epsilon\le \epsilon_0(\xi_\m)$, we have: 
If $\angle_x^\xi(\tau,\tau') \le \epsilon$, then
\[
\angle_y^\xi(\tau,\tau') \le \epsilon'(\Theta,l)
\]
with $\epsilon'(\Theta,l) \rightarrow 0$ as $l\rightarrow \infty$.
\end{lem}

\begin{proof}
This is a restatement of \cite[Lemma 2.44(ii)]{Kapovich:2014aa}.
\end{proof}

In the following, $\Theta''$ will denote an auxiliary subset of $\ost{\tmod}$ such that 
$\Int{\T''} \supset \T$. 
Let $\alpha'' = \titsangle(\Theta,\partial\T'')$.

\begin{lem}\label{lem2}
Let  $\tau \in \flag{\tmod}$ be any simplex, and $y\in V(x,\tst{\tau})$ be any point. If $z \in x\xi_\tau$ is any point such that $d(x,y)\sin\alpha'' \ge d(x,z)$, then
\[
y\in V(z,\tttst{\tau}).
\]
\end{lem}

\begin{proof}
Let $F$ be a maximal flat asymptotic to $\tau$ containing $x$ and $y$, and let $y'$ be the nearest point projection of $y$ into $x\xi_\tau$. Since $\xi \in \tmod$, the Tits distance from $\xi$ to any point in $\Theta$ is bounded above by $\pi/2-\epsilon(\T)$ where $\epsilon(\T)>0$. Then, the distance from $x$ to $y'$ is comparable to $d(x,y)$, i.e.
\[
d(x,y)\cos(\theta) = d(x,y'), \quad \theta = \angle_x(y,y') \le \pi/2 - \epsilon(\Theta).
\]
Notice that $0<\alpha'' \le \theta + \alpha'' \le \pi/2-\epsilon(\Theta'')$. 
For any point $z'\in xy'$, $y\in V(z',\tttst{\tau})$ whenever $d(y,y') \le d(z',y') \tan(\theta + \alpha'')$. 

Let $z' \in xy'$ be a point which satisfies $d(y,y') = d(z',y') \tan(\theta + \alpha'')$. In that case,
\begin{multline*}
 d(x,z') = d(x,y') - d(z',y') = d(x,y)\cos\theta - d(y,y')\cot(\theta+\alpha'') \\
 =d(x,y)\cos\theta - d(x,y)\sin\theta\cot(\theta+\alpha'') = d(x,y)\frac{\sin{\alpha''}}{\sin(\theta+\alpha'')} \ge d(x,y)\sin\alpha''.
\end{multline*}
Moreover, if $z \in x\xi_\tau$ is the point satisfying $d(x,z) = d(x,y)\sin\alpha''$, then $z' \in V(z,\tttst{\tau})$, and from convexity of cones, $y \in V(z,\tttst{\tau})$.
\end{proof}

\begin{lem}\label{lem3}
There exists a function $f_1'(x,\Lambda_1,\Lambda_2,\Theta,\xi) : [0,\infty) \rightarrow [0,\pi]$ satisfying $ f_1'(R) \r 0$ as $R\rightarrow \infty$
 such that the following holds: For $\tau_1\in\Lambda_1$, let $y_1\in V(x,\tst{\tau_1})$ be any point. If $d(x,y_1) \ge R$, then
\begin{equation*}
\max_{\tau_2\in \Lambda_2}\angle_{y_1}^\xi(x,\tau_2) \leq f_1'(R).
\end{equation*}
\end{lem}

\begin{proof}
Using Lemma \ref{lem2}, if $z_1\in x\xi_{\tau_1}$ is satisfies  $d(x,z_1) = d(x,y_1)\sin\alpha''$, then $y_1\in V(z_1,\tttst{\tau_1})$. See Figure \ref{fig:small_angles}. Letting $d(x,z_2)\rightarrow \infty$ in Proposition \ref{small_angles_preffered}, we get
\[ \angle_{z_1}^\xi (s_x(\tau_1),\tau_2) = \angle_{z_1}^\xi (x,\tau_2) \le f_0(R\sin\alpha''), \quad \forall\tau_2\in\Lambda_2.\]
When $R$ is sufficiently large, $R\ge R_2(x,\Lambda_1,\Lambda_2,\xi)$, then $f_0(R\sin\alpha) \le \epsilon_0(\xi_\m)$, where $\epsilon_0(\xi_\m)$ is as in Lemma \ref{lem1}. Moreover, since $d(y_1,z_1)\ge R(1-\sin\alpha'')$, Lemma \ref{lem1} implies that
\[ \angle_{y_1}^\xi (x,\tau_2) = \angle_{y_1}^\xi (s_x(\tau_1),\tau_2) \le \epsilon'(\Theta'', R(1-\sin\alpha'')), \quad\forall \tau_2\in\Lambda_2.\]

\begin{figure}[h]
\begin{center}
\begin{tikzpicture}
    \node[anchor=south west,inner sep=0] (image) at (0,0,0) {\includegraphics{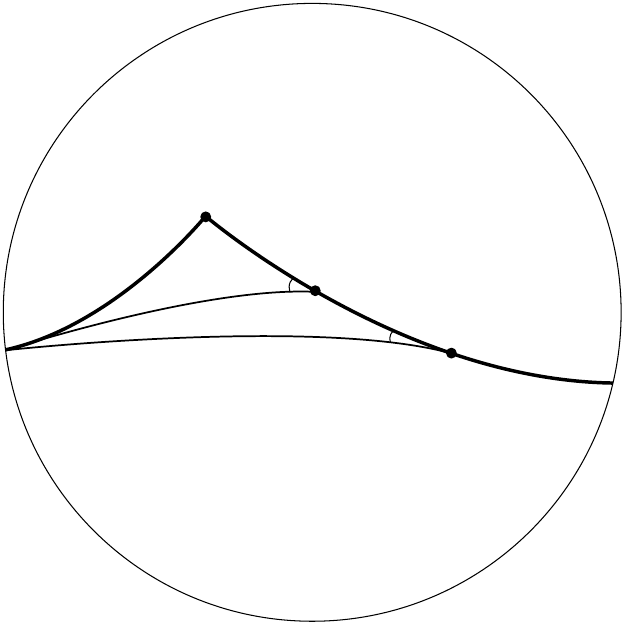}};
    \begin{scope}[x={(image.south east)},y={(image.north west)}]
        \node at (0.33,0.7) {$x$};
        \node at (0.546,0.57) {$z_1$};
        \node at (0.74,0.474) {$y_1$};
        \node at (1.02,0.37) {$\tau_1$};
        \node at (-.02,0.43) {$\tau_2$};
        \node at (0.4,0.56) {$\scriptstyle\epsilon_0 \ge$};
        \node at (0.559,0.4821) {$\scriptstyle\epsilon' \ge$};
    \end{scope}
\end{tikzpicture}
\end{center}
\captionsetup{width=0.7\textwidth}
\caption{A schematic picture depicting small angles. The thick lines are Weyl cones $V(x,\tst{\tau_1})$ and $V(x,\tst{\tau_2})$, while the thin lines are the geodesic rays $z_1\xi_2$ and $y_1\xi_2$. }
\label{fig:small_angles}
\end{figure}

So, we may define
\begin{equation*}
f_1'(R) =
\left\{
\begin{array}{ll}
\epsilon'(\Theta'', R(1-\sin\alpha'')), &\text{if }R\ge R_2\\
\pi, &\text{otherwise}
\end{array}
\right..
\end{equation*}
\end{proof}

Now we are ready to prove the estimate (\ref{f_1}). We first observe that the only property of the point $x\in X$ we have used to estimate the functions $f$ in Proposition \ref{f}, $R_0$ and $f_0$ in Proposition \ref{small_angles_preffered} and subsequently $f_1'$ in Lemma \ref{lem3} is that
\[ d(x, P(\tau_1,\tau_2)) \leq D, \quad \forall\tau_1\in\Lambda_1, \forall\tau_2\in\Lambda_2.\]
All these estimates for $x$ work for any other point $x_1$ satisfying this inequality with the same number $D$.
Moreover, all these estimates work if we replace $\Lambda_1$ or $\Lambda_2$ by their proper subsets. In particular, we may replace $\Lambda_2$ by any of its singleton subsets $\{\tau_2\}$, or replace $x$ by a point $y_2\in V(x,\tst{\tau_2})$. Here we use the fact that for a fixed $\tau_2$ and a point $y_2$ in $V(x,\tst{\tau_2})$,
\[ d(y_2,P(\tau_1,\tau_2)) \le d(x, P(\tau_1,\tau_2)) \leq D, \quad \forall\tau_1\in\Lambda_1.\]
Therefore, the same estimate $f_1'(x,\Lambda_1,\Lambda_2,\xi)$ works when $x$  and $\Lambda_2$ (elsewhere) in Lemma \ref{lem3} are replaced by $y_2$ and $\{\tau_2\}$, respectively. 
Precisely, whenever $y_1y_2$ is a $\Theta'$-regular,
\begin{equation}\label{y_2}
\angle_{y_1}^\xi(y_2,\tau_2) \leq f_1'(d(y_1,y_2)),
\end{equation}
where $f_1' = f_1'(x,\Lambda_1,\Lambda_2,\Theta',\xi)$. $\Theta'$-regularity of $y_1y_2$ is also guaranteed whenever, for $i=1,2$, $d(x,y_i) \ge R_1(x,\Lambda_1,\Lambda_2,\Theta',\Theta)$.

Therefore, if $R\ge R_1(x,\Lambda_1,\Lambda_2,\Theta',\Theta)$ and $d(x,y_1),d(x,y_2)\ge R$, we can use Lemma \ref{lem3}, (\ref{4D}) and (\ref{y_2}) to get
\begin{multline*}
\angle_{y_1}^\xi(x,y_2) \le \angle_{y_1}^\xi(x,\tau_2) + \angle_{y_1}^\xi(y_2,\tau_2)\\
\le f_1'(R) + f_1'(R\sin\alpha -2D(1+\sin\alpha))
\le 2f_1'(R\sin\alpha -4D),
\end{multline*}
where $\alpha =  \titsangle(\Theta,\partial\Theta') $.

This completes the proof of part \ref{tool3}.
\end{proof}

%========================
\section{Morse condition}\label{sec:morse}

In this section, we discuss Morse quasigeodesics, Morse embeddings and Morse subgroups and their various properties. These notions were introduced in \cite{Kapovich:2014aa}, and it was proved that the notions of Morse subgroups and Anosov subgroups are equivalent (Theorem \ref{equiv_RCA}). 

\medskip
One of the important new result in this section is the \emph{replacement lemma} (see Theorems \ref{replacements} and \ref{replacements2}) which will be proved in section \ref{sec:replacements}. This will be an important ingredient in the proof of Theorem \ref{main_result}.

\subsection{Stability of quasigeodesics}\label{stability}
Recall that a \emph{quasigeodesic} in a metric space $(Y,d_Y)$ is a coarse-isometric embedding of an interval $I\subset\R$ into $Y$. Quantitively speaking, an $(L,A)$-quasigeodesic in $Y$ is a map, not necessarily continuous, $\gamma : I \rightarrow Y$ which satisfies
\[ L^{-1}|a-b| -A \le d_Y(\g(a),\g(b)) \le L|a-b| + A,\]
where $d_Y$ is the metric of $Y$. When $Y$ is assumed to be a geodesic $\delta$-hyperbolic space, the Morse lemma, proven for these spaces by Gromov \cite[Proposition  7.2.A]{MR919829}, establishes stability of quasigeodesics. Precisely, an $(L,A)$-quasigeodesic in a $\delta$-hyperbolic space stays within a uniform neighborhood of a geodesic; the radius $H$ of this neighborhood solely depends on the given parameters, namely $L,A$ and $\delta$,
\[H = L^2(A_1A + A_2\delta),\]
where $A_1$ and $A_2$ are universal constants, see \cite{MR3003738}. The stability of quasigeodesics can also be stated without referring to geodesics: An $(L,A)$-quasigeodesic path is \emph{stable} if the image of any $(L',A')$-quasigeodesic with the same endpoints is uniformly close to the original path. Thus, any uniform quasigeodesic in a $\delta$-hyperbolic space is stable. Morse lemma is a vital ingredient to prove the invariance of hyperbolicity under quasiisometries, see \cite[Corollary 11.43]{Drutu:2018aa}.

\medskip
One of the major differences between the coarse geometric nature of $\cat0$ (or \emph{non-positively curved}) and $\delta$-hyperbolic  spaces is that the quasigeodesics in $\cat0$ spaces can be unstable, and thus the most naive generalization of the Morse lemma fails in the $\cat0$ settings, already for the Euclidean plane. 
Some versions of the Morse lemma are known for $\cat0$ spaces; in \cite{MR3175245} it has been shown that a quasigeodesic is stable if and only if it is \emph{strongly contracting}. However, this class of quasigeodesics is too restrictive in the context of symmetric spaces.

Nevertheless, according to the main theorem of 
\cite{Kapovich:2014ab}, an analogue of the Morse lemma
holds for  $\tmod$-\emph{regular quasigeodesics}, with 
diamonds (or, cones, or parallel sets) 
replacing geodesic segments (rays, complete geodesics). 

\medskip
The letters $B$, $D$ appear bellow are non-negative numbers.

\begin{defn}[Regular quasigeodesics]
A pair of points in $X$ is called \emph{$\Theta$-regular} if the geodesic segment connecting them is $\Theta$-regular. An $(L,A)$-quasigeodesic $\gamma: I\rightarrow X$ is called \emph{$(\Theta,B)$-regular} if for all $t_1,t_2 \in I$, $|t_1 - t_2| \ge B$ implies that $(\gamma(t_1),\gamma(t_2))$ is $\Theta$-regular.
\end{defn}

In \cite[Theorem 5.17]{Kapovich:2014ab}, it is shown that (finite) regular quasigeodesics are special in the sense that they live very close to the diamonds. We state this result in the next theorem.

\begin{thm}[Morse Lemma for Symmetric Spaces of Higher Rank]\label{morse_lemma} Let $\gamma: [a,b] \rightarrow X$ be a $(\Theta,B)$-regular $(L,A)$-quasigeodesic. There exists a constant $D = D(L,A,\Theta,\Theta',B,X)>0$ such that the image of $\g$ is contained in the $D$-neighborhood of a diamond $\diamondsuit_{\Theta'}(x_1,x_2)$ with tips satisfying $d(\g(a),x_1)\le D$, $d(\g(b),x_2) \le D$.
\end{thm}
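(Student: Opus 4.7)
My plan is to discretize $\gamma$, upgrade short-scale $\Theta$-regularity to long-scale $\Theta'$-regularity via the \tedl, and then use the visual angle estimates of Section \ref{estimates} to force every intermediate point into a bounded neighborhood of a single diamond.

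First, I would fix a discretization $a=t_0<t_1<\cdots<t_N=b$ with $B\le t_{i+1}-t_i\le 2B$ and set $p_i=\gamma(t_i)$. The $(L,A)$-quasigeodesic property bounds $d(p_i,p_{i+1})\le 2LB+A$, and $(\Theta,B)$-regularity makes each pair $(p_i,p_{i+1})$ $\Theta$-regular. Applying the \tedl\ to the chain $(p_j,p_{j+1},\dots,p_k)$ and using the lower bound $d(p_j,p_k)\ge L^{-1}(t_k-t_j)-A$ coming from the quasigeodesic property, I would argue that for $k-j$ above a threshold $N_0=N_0(L,A,\Theta,\Theta',B)$ the Euclidean error accumulated from the short regular contributions is small relative to $\|d_\Delta(p_j,p_k)\|$, forcing $p_jp_k$ to be $\Theta'$-regular. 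Assuming $N\ge N_0$ (the short case being absorbed into the constant $D$), this selects a unique antipodal pair $\tau_\pm$ of type $\tmod$ with $p_N\in V(p_0,\ttst{\tau_+})$ and $p_0\in V(p_N,\ttst{\tau_-})$, so the diamond $\tdd{p_0,p_N}$ is well-defined.

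Second, for each vertex $p_i$ with $\min(i,N-i)\ge N_0$, the subsegments $p_0p_i$ and $p_Np_i$ are themselves $\Theta'$-regular, determining simplices $\sigma_i^+$ at $p_0$ and $\sigma_i^-$ at $p_N$. I would apply Proposition \ref{tool} with $\Lambda_+,\Lambda_-$ chosen as compact antipodal subsets of $\flag{\tmod}$ containing all possible $\sigma_i^+,\sigma_i^-$ (these land in $\theta^{-1}(\Theta'')\cap\flag{\tmod}$ for a suitable intermediate $\Theta''$ with $\Theta\subset\Int{\Theta''}$, $\Theta''\subset\Int{\Theta'}$) to conclude that the $\xi$-angles $\angle_{p_0}^\xi(\sigma_i^+,\tau_+)$ and $\angle_{p_N}^\xi(\sigma_i^-,\tau_-)$ tend to $0$ as $\min(d(p_0,p_i),d(p_i,p_N))\to\infty$. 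Via the uniform continuity of $\Theta'$-cones (Theorem \ref{hd}) combined with the \tedl, these vanishing angular deviations translate into a uniform bound on $d(p_i,V(p_0,\ttst{\tau_+}))$ and on $d(p_i,V(p_N,\ttst{\tau_-}))$, and hence on $d(p_i,\tdd{p_0,p_N})$. For the boundary strips $\min(i,N-i)<N_0$, the point $p_i$ is within distance $2N_0(LB+A)$ of $p_0$ or $p_N$, hence automatically within a uniform constant of a tip of the diamond; setting $(x_1,x_2):=(p_0,p_N)$ yields $d(\gamma(a),x_1)=d(\gamma(b),x_2)=0$ and the advertised bound.

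The main obstacle is the circular-looking first step: the Weyl-convex target $\Theta'$ must be chosen in advance, yet the simplices $\sigma_i^\pm$ singled out by the quasigeodesic are a priori unconstrained. The way out is to promote regularity in stages through the chain $\Theta\subset\Int{\Theta''}\subset\Theta''\subset\Int{\Theta'}$: use only the $\Delta$-length triangle inequality to establish weak $\Theta''$-regularity on intermediate scales, then upgrade to $\Theta'$-regularity by the angle-controlled argument of the second paragraph. A secondary subtlety is that Proposition \ref{tool} is phrased at a fixed base point $x$, whereas here the base points $p_0,p_N$ vary along the quasigeodesic; this is handled by the remark implicit in the proof of Proposition \ref{tool} that the estimates depend only on a uniform bound for $d(x,P(\tau_1,\tau_2))$, which is furnished here by the bounded projection distance of $p_0,p_N$ onto $P(\tau_+,\tau_-)$ (itself a consequence of the $\Theta'$-regularity of $p_0p_N$).
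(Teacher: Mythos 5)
The paper does not prove Theorem~\ref{morse_lemma}: it is quoted verbatim from \cite[Theorem 5.17]{Kapovich:2014ab}, where it is the central result of that entire paper, proved by a substantial local-to-global argument. There is therefore no internal proof for me to compare your proposal against; instead I will assess your proposal on its own terms.

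Your first step is where the argument breaks down. You claim that the $\Delta$-valued triangle inequality, combined with the quasigeodesic lower bound $d(p_j,p_k)\ge L^{-1}(t_k-t_j)-A$, forces $p_jp_k$ to be $\Theta'$-regular once $k-j$ is large, because ``the Euclidean error accumulated from the short regular contributions is small relative to $\|d_\Delta(p_j,p_k)\|$.'' But the $\Delta$-valued triangle inequality only controls the \emph{norm} of the change in $d_\Delta(p_j,\cdot)$ when one endpoint moves; it does not constrain the \emph{direction} of $d_\Delta(p_j,p_k)$ in $\Delta$. In particular, $d_\Delta(p_j,p_k)$ is not (even approximately) the vector sum of the $d_\Delta(p_i,p_{i+1})$, since consecutive short segments live in different maximal flats with uncontrolled angles at the joints, and there is no cancellation-free additivity outside a single flat. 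A chain of $\Theta$-regular segments can perfectly well ``zigzag'' so that the coarse displacement becomes singular; ruling this out for uniform quasigeodesics is precisely the deep content of the higher-rank Morse Lemma, and it requires the angle control in the hypothesis of Theorem~\ref{seq_morse} (the $(\Theta,\epsilon)$-straightness condition $\angle^\xi_{x_n}(x_{n-1},x_{n+1})\ge \pi-\epsilon$), which your discretization has not established and which does not follow from the $\Delta$-triangle inequality alone. Without this step, the simplices $\tau_\pm$ and the target diamond are not yet defined, and the rest of the argument cannot get started.

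The second step has an additional mismatch even granting the first. Proposition~\ref{tool} produces bounds on $\xi$-angles at the points $y_1,y_2$ lying in two antipodal cones with a \emph{common} tip $x$; it does not give bounds on the $\xi$-angles $\angle^\xi_{p_0}(\sigma_i^+,\tau_+)$ or $\angle^\xi_{p_N}(\sigma_i^-,\tau_-)$ at the endpoints $p_0,p_N$ of the quasigeodesic, which is what your proposal needs. The simplices $\sigma_i^+$ (at $p_0$) and $\sigma_i^-$ (at $p_N$) are not a pair of opposite flags in the sense required for Proposition~\ref{tool}, and there is no single base point from which both are seen in antipodal cones. So that proposition as stated does not apply in the way you intend. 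Were you to try instead to import stronger tools from the paper such as Theorem~\ref{cd} or Proposition~\ref{nd2}, the argument would become circular, since those are proved using the very Morse Lemma you are trying to establish.

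In short, what you have proposed is a plausible \emph{use} of the Morse Lemma rather than a proof of it. The hard part, namely showing that uniform $\tau_{\mathrm{mod}}$-regular quasigeodesics cannot lose regularity at long scales, is exactly the part your first step glosses over, and it is not a consequence of the $\Delta$-triangle inequality plus the quasi-isometry lower bound; it requires the local-to-global machinery for straight sequences developed in \cite{Kapovich:2014aa,Kapovich:2014ab}.
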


Now we review the notion of Morse quasigeodesics.

\begin{defn}[Morse quasigeodesics, \cite{Kapovich:2014aa}]\label{MQG_def}
A (finite, semiinfinite, or biinfinite) $(L,A)$-quasi\-geodesic $\gamma: I\rightarrow X$ is called a \emph{$(L,A,\Theta,D)$-Morse quasigeodesic} if for all $t_1,t_2\in I$, the image $\gamma{([t_1,t_2])}$ is $D$-close to a $\Theta$-diamond $\diamondsuit_\Theta(x_1,x_2)$ with tips satisfying $d(x_i, \gamma(t_i)) \le D$, for $i=1,2$. 
\end{defn}

\begin{rem}~

\begin{enumerate}
\item In light of this definition, the Theorem \ref{morse_lemma} is equivalent to saying that \emph{the uniformly regular uniform quasigeodesics are uniformly Morse}. Conversely, uniformly Morse quasigeodesics are also uniformly regular.
\item Note that it is not in general true that for an $(L,A,\Theta,D)$-Morse quasigeodesic $\g$, the segment $\g(t_1)\g(t_2)$ is $\tmod$-regular. However, when $t_2 -t_1$ is uniformly large, $\g(t_1)\g(t_2)$ becomes uniformly $\tmod$-regular, and in this case one can say that $\g([t_1,t_2])$ lies in a uniform neighborhood of $\tdd{\g(t_1),\g(t_2)}$ for any subset $\T'\subset \ost{\tmod}$ containing $\T$ in its interior (cf. Theorem \ref{cd}).
\end{enumerate}
\end{rem}

\subsection{Straight sequences}

We review some important tools for constructing Morse quasigeodesics.

\medskip
Let $\T$, $\T'$ be subsets of $\flag{\tmod}$ such that
\[ \Int{\Theta'} \supset \Theta.\]

\begin{defn}[Straight-spaced sequences, \cite{Kapovich:2014aa}]
Let $\epsilon\ge 0$ be a number. A (finite, infinite, or biinfinite) sequence $(x_n)$ is called \emph{$(\Theta,\epsilon)$-straight} if, for each $n$, the segments $x_n x_{n+1}$ are $\Theta$-regular and
\[ \angle_{x_n}^\xi (x_{n-1}, x_{n+1}) \ge \pi - \epsilon.\]
Moreover, $(x_n)$ is called \emph{$l$-spaced} if $d(x_n,x_{n+1}) \ge l$ for all $n$.
\end{defn}

\begin{defn}[Morse sequence]
A sequence $(x_n)$ is called \emph{$(\Theta,D)$-Morse} if the piecewise geodesic path formed by connecting consecutive points by geodesic segments is a $(\Theta, D)$-Morse quasigeodesic.
\end{defn}

\begin{thm}[Morse lemma for straight spaced sequences, \cite{Kapovich:2014aa}]\label{seq_morse}
For $\Theta,\Theta',D$, there exists $l,\epsilon$ such that any $(\Theta,\epsilon)$-straight $l$-spaced sequence $(x_n)$ in $X$ is $D$-close to a parallel set $P(\tau_+,\tau_-)$ of type $\tmod$. Moreover, the nearest point projection $\bar{x}_n$ of $x_n$ on $P(\tau_+,\tau_-)$ satisfies
\[\bar{x}_{n\pm m} \in V(\bar{x}_n,\ttst{\tau_{\pm}}),\quad \forall m \in\mathbb{N}.\]
Furthermore, the sequence $(x_n)$ is a uniform Morse sequence with parameters depending only on the given data $\Theta,\Theta',D$.
\end{thm}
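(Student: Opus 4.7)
My plan is to first show that any $(\Theta,\epsilon)$-straight $l$-spaced sequence fellow-travels a geodesic in some parallel set $P(\tau_+,\tau_-)$ of type $\tmod$, and then derive the Morse property from Theorem~\ref{morse_lemma}. For each $n$, let $\tau_\pm^{(n)}$ denote the simplices in $\flag{\tmod}$ such that $x_{n\pm 1}\in V(x_n,\st{\tau_\pm^{(n)}})$; the straightness condition says precisely that $\tau_+^{(n)}$ and $\tau_-^{(n)}$ are $\epsilon$-nearly antipodal in the $\xi$-angle sense at $x_n$.

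The crux is a quantitative Cauchy-type argument showing that $\tau_+^{(n)}$ converges in $\flag{\tmod}$ to a simplex $\tau_+$ (and similarly $\tau_-^{(n)}\to \tau_-$, antipodal to $\tau_+$). The mechanism is supplied by Proposition~\ref{tool}: from a far point $y\in V(x,\tst{\tau_+^{(0)}})$ (with $y$ playing the role of $x_n$), the $\xi$-angle back toward $x$ through points of the opposite cone is close to $\pi$, with error decaying like $f_1(d(x,y))$. Combined with the straightness $\angle^\xi_{x_n}(x_{n-1},x_{n+1})\ge \pi-\epsilon$ at each vertex, I can inductively control how much $\tau_+^{(n+1)}$ differs from $\tau_+^{(n)}$ by an error no worse than $\epsilon + f_1(l)$, which upon iteration summs to a small geometric-series total once $l$ is large and $\epsilon$ is small. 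The same argument backwards produces $\tau_-$, and near-antipodality at each level passes to the limit to give $\tau_+$ antipodal to $\tau_-$, hence a well-defined parallel set $P = P(\tau_+,\tau_-)$.

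Once $P$ is identified, uniform continuity of $\Theta'$-cones (Theorem~\ref{hd}), together with the smallness of the accumulated angular errors, gives that each $x_n$ lies within a uniformly bounded distance $D$ of $P$; this $D$ is controlled by the given parameters and tends to $0$ as $l\to\infty$, $\epsilon\to 0$. Letting $\bar{x}_n$ be the nearest-point projection of $x_n$ onto $P$, the containment $\bar{x}_{n+m}\in V(\bar{x}_n,\ttst{\tau_+})$ will follow from the fact that each $\bar{x}_{n+1}$ lies near the $\Theta$-cone $V(\bar{x}_n,\tst{\tau_+^{(n)}})$ obtained by projecting the cone at $x_n$, combined with the already-established $\xi$-closeness of $\tau_+^{(n)}$ to $\tau_+$ (which puts the projection inside the slightly fatter $\Theta'$-star). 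Finally, for the Morse property, the piecewise geodesic through $(x_n)$ is an $(L,A)$-quasigeodesic by the spacing and straightness conditions, and it is $(\Theta',B)$-regular because it stays within distance $D$ of the parallel set $P$; thus Theorem~\ref{morse_lemma} applies and provides the required diamond approximation with parameters depending only on $\Theta,\Theta',D$.

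The main obstacle is the inductive bootstrap in the second step: ensuring that the cumulative angular errors in $\tau_+^{(n)}$ do not blow up as $n\to\infty$. The essential input that makes this work is the asymptotic decay $f_1(R)\to 0$ from Proposition~\ref{tool}, which supplies a contracting factor that, provided $l$ is chosen large enough, dominates the linear accumulation of the per-step straightness error $\epsilon$. Concretely, one first picks $\epsilon$ small (in terms of the target $D$, $\Theta$, $\Theta'$), and then chooses $l$ large enough that the $f_1$-terms sum to a small geometric series, closing the induction and yielding all the assertions.
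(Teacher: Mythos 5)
The paper does not prove Theorem~\ref{seq_morse}; it is quoted directly from \cite{Kapovich:2014aa}, so there is no in-paper argument to compare against. Evaluating your sketch on its own terms, the overall strategy (first show the sequence fellow-travels a parallel set, then promote to the Morse property via Theorem~\ref{morse_lemma}) is the right shape, and the final step is correct once fellow-traveling is in hand. However, the central convergence step has a genuine gap.

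You claim that the deviation of $\tau_+^{(n+1)}$ from $\tau_+^{(n)}$ is at most $\epsilon + f_1(l)$ per step and that these ``sum to a small geometric-series total.'' But $\epsilon + f_1(l)$ is a fixed constant (the spacing $l$ is fixed), so summing over $n$ steps yields linear growth $n(\epsilon + f_1(l))$, not a convergent geometric series. For the Cauchy argument to close, you need the per-step angular error to \emph{decay} with $n$. The natural mechanism is to measure $\angle^\xi_{x_0}(x_n,x_{n+1})$ using the growing distance $d(x_0,x_n)$, but this raises two further problems. First, there is a bootstrap circularity: applying Proposition~\ref{tool} at stage $n$ in the way you describe requires already knowing that $x_n$ lies in a cone $V(x_0,\tst{\tau})$ with $\tau$ in a fixed compact antipodal pair and that $d(x_0,x_n)$ has grown by a definite amount, but both of those facts are consequences of (not inputs to) the fellow-traveling you are trying to establish. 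In higher rank the straightness condition on $\xi$-angles does not by itself yield a drift estimate $d(x_0,x_n)\gtrsim n$, since $\diam(\Theta)$ can be comparable to $\pi/2$; this is exactly where the argument is delicate. Second, even granting the drift, the decay rate supplied by Proposition~\ref{tool} is $f_1(R)\sim\sin^{-1}(C/R)\sim 1/R$, so the putative error sum $\sum_n f_1(d(x_0,x_n))\sim\sum_n 1/(nl)$ diverges; the decay is not fast enough, and $\Lambda_1,\Lambda_2$-compactness uniformity for the constants in Proposition~\ref{tool} is also not a priori available for the wandering family $\tau_\pm^{(n)}$. What is missing is the global contraction mechanism that KLP use in their proof, which does not proceed by naively accumulating per-step visual-angle errors. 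As written, the sketch does not close the induction and the key claim that $(\tau_+^{(n)})$ is Cauchy is not established.
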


\subsection{Replacements}\label{sec:replacements}

Here we define an alternative notion of stability of quasigeodesics, namely that \emph{Morse property is stable under replacements}. See Theorem \ref{replacements}, and its generalized version Theorem \ref{replacements2}.

\medskip
We first develop an important tool which will be needed in the proof of these results.

\begin{defn}[Longitudinal segments]
 Let $y_1$, $y_2$ be any points in $\pt$. The (oriented) segment $y_1y_2$ is called  \emph{$\T$-longitudinal} if $y_2 \in\ V(y_1,\tst{\tau_+})$. Moreover, $y_1y_2$ is called  \emph{($\tmod$)-longitudinal} if $y_2 \in\ V(y_1,\ost{\tau_+})$
 \end{defn}
 
 Convexity of $\T$-cones implies: 
 
\begin{lem}[Concatenation of longitudinal segments]\label{lem:con_longitudinal}
Let $x_1,x_2,x_3\in \pt$ be points such that $x_1x_2$ and $x_2x_3$ are $\T$-longitudinal. Then $x_1x_2$ is also $\T$-longitudinal.
\end{lem}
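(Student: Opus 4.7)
The conclusion should be that $x_1x_3$ (not $x_1x_2$, which is assumed) is $\Theta$-longitudinal, so the task is to show $x_3\in V(x_1,\tst{\tau_+})$ given that $x_2\in V(x_1,\tst{\tau_+})$ and $x_3\in V(x_2,\tst{\tau_+})$. The plan is to establish the stronger containment
\[
V(x_2,\tst{\tau_+})\ \subset\ V(x_1,\tst{\tau_+}),
\]
from which the lemma is immediate.

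First I would recall that $V(x_1,\tst{\tau_+})$ is, by definition, the union of all $\Theta$-regular rays from $x_1$ with ideal endpoints in $\tst{\tau_+}$, so it contains $x_1$ together with every ray $x_1\eta$ for $\eta\in\tst{\tau_+}$. I will use the key fact cited from \cite{Kapovich:2014aa, MR3736790} that the $\Theta$-cone $V(x_1,\tst{\tau_+})$ is a closed convex subset of $X$.

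Next comes the main geometric step, a standard $\cat0$ observation: if $C\subset X$ is closed convex, $y\in C$, and $C$ contains an entire ray $x\eta$ with ideal endpoint $\eta$, then $C$ also contains the ray $y\eta$. Indeed, for each $t\ge 0$ the geodesic segment from $y$ to $x\eta(t)$ lies in $C$ by convexity, and as $t\to\infty$ these segments converge (uniformly on compacts) to the ray $y\eta$ asymptotic to $\eta$, which then lies in $C$ by closedness. Applying this with $C=V(x_1,\tst{\tau_+})$ and $y=x_2$, and letting $\eta$ range over $\tst{\tau_+}$, I conclude that the ray $x_2\eta$ lies in $V(x_1,\tst{\tau_+})$ for every $\eta\in\tst{\tau_+}$. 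Taking the union over such $\eta$ yields $V(x_2,\tst{\tau_+})\subset V(x_1,\tst{\tau_+})$.

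Since $x_3\in V(x_2,\tst{\tau_+})$ by hypothesis, this containment gives $x_3\in V(x_1,\tst{\tau_+})$, which is exactly the statement that $x_1x_3$ is $\Theta$-longitudinal. There is no real obstacle here beyond invoking convexity/closedness of the cone; the only mild subtlety is the asymptotic limit argument, which is classical in the $\cat0$ setting and does not require anything beyond standard comparison geometry.
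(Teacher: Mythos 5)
Your proposal is correct and takes essentially the same route the paper intends: the paper gives no explicit proof and simply remarks ``Convexity of $\T$-cones implies:'' before the lemma, and your argument (nesting of cones $V(x_2,\tst{\tau_+})\subset V(x_1,\tst{\tau_+})$, obtained by taking limits of segments $[x_2, x_1\eta(t)]$ using convexity and closedness of the cone) is the standard way to make that remark precise. You also correctly flag the typo: the conclusion should read ``$x_1x_3$ is $\T$-longitudinal,'' not $x_1x_2$.
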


\begin{prop}[Nearby diamonds]\label{nd2}
Let $\g: [a,b]\r X$ be an $(L,A,\T,D)$-Morse qusaigeodesic, and let $\delta> 0$ be any number. Let $\pt$ be a parallel set such that the image of $\g$ is contained in $\nbhd{\delta}{\pt}$. Denote the nearest point projection of $\g(t)$ into $\pt$ by $\bg(t)$. 
Suppose that ${\bg(a)\bg(b)}$ is longitudinal. Then,
there exist $R' = R'(L,A,\T,\T',D,\delta)$, $D' = D'(L,A,\T,\T',D,\delta)$ such that the following holds: For any $t_1,t_2 \in [a,b]$,  if $(t_2 - t_1) \ge R'$, then ${\bg(t_1)\bg(t_2)}$ is $\Theta'$-longitudinal and $\gamma([t_1,t_2]) \subset \nbhd{D'}{\tdd{\bg(t_1),\bg(t_2)}}$.
\end{prop}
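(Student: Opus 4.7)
Plan. My approach is in two steps: first localize $\g$ to a uniform neighborhood of the longitudinal diamond $\tdd{\bg(a),\bg(b)}$ by combining the Morse condition on $[a,b]$ with uniform continuity of diamonds (Theorem \ref{cd}); then handle an arbitrary sub-interval $[t_1,t_2]$ of length $\ge R'$ by applying Morse to the sub-path and transporting the longitudinality via a second application of Theorem \ref{cd}.

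For the global step, I apply Morse to $\g|_{[a,b]}$ to obtain a $\T$-diamond $\td{x_-,x_+}$ with $\g([a,b])\subset \nbhd{D}{\td{x_-,x_+}}$ and $d(\g(a),x_-),\, d(\g(b),x_+)\le D$, so that $x_-,x_+$ lie within $D+\delta$ of $\bg(a),\bg(b)$. Taking $R' > b-a$ when $b-a$ is small renders the conclusion vacuous, so I assume $b-a$ is large; then the $\T'$-regularity property of long Morse segments together with the triangle inequality for $\Delta$-lengths forces $\bg(a)\bg(b)$ to be $\T'$-regular, and combined with longitudinality this makes $\tdd{\bg(a),\bg(b)}$ well-defined. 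Theorem \ref{cd} then yields $\td{x_-,x_+}\subset \nbhd{c}{\tdd{\bg(a),\bg(b)}}$ with $c=c(\T,\T',D+\delta)$, so $\g([a,b])\subset \nbhd{D+c}{\tdd{\bg(a),\bg(b)}}$, and by the $1$-Lipschitz property of the projection onto the convex set $\pt$ also $\bg([a,b])\subset \nbhd{D+c}{\tdd{\bg(a),\bg(b)}}$.

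For the sub-path step, apply Morse to $\g|_{[t_1,t_2]}$ to obtain a $\T$-diamond $\td{z_-,z_+}$ with $d(\g(t_i),z_i)\le D$, so $d(z_i,\bg(t_i))\le D+\delta$ and $\g([t_1,t_2])\subset \nbhd{D}{\td{z_-,z_+}}$. The segment $z_-z_+$ is $\T$-regular, with some associated antipodal pair of simplices $\sigma_\pm$ of type $\tmod$, and $\td{z_-,z_+}\subset P(\sigma_-,\sigma_+)$. The crucial point is to identify $(\sigma_-,\sigma_+)=(\tau_-,\tau_+)$, equivalently to show that $\bg(t_1)\bg(t_2)$ is $\T'$-longitudinal in $\pt$; once this is established, a final application of Theorem \ref{cd} to the pairs $(z_-,z_+)$ and $(\bg(t_1),\bg(t_2))$ (with corresponding tips $(D+\delta)$-close) gives $\td{z_-,z_+}\subset \nbhd{c'}{\tdd{\bg(t_1),\bg(t_2)}}$, hence $\g([t_1,t_2])\subset \nbhd{D+c'}{\tdd{\bg(t_1),\bg(t_2)}}$, and one takes $D'=D+c'$.

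The main obstacle is the rigidity identification $\sigma_\pm=\tau_\pm$: $\T$-regularity alone does not force a segment close to $\pt$ either to lie in $\pt$ or to point in the $\tau_+$ direction. My plan to force this is to exploit that $\td{z_-,z_+}$ is contained in a $(2D+c)$-neighborhood of $\pt$ while having length $\gtrsim L^{-1}(t_2-t_1)\ge L^{-1}R'$; for $R'$ large enough, the linear divergence of distinct parallel sets of type $\tmod$ from any compact region forces $P(\sigma_-,\sigma_+)=\pt$, and the longitudinal ordering of $\bg(a),\bg(b)$ together with quasigeodesic monotonicity of $\g$ (monitored through the Busemann function associated with $\xi_{\tau_+}$) selects $\sigma_+=\tau_+$ rather than any other antipodal pair available inside $\pt$. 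The threshold $R'$ is then determined by the rate of this divergence together with the accumulated constants $D,\delta,c,c'$.
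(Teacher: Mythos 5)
You take a genuinely different route, and there is a gap at exactly the point you flag as the main obstacle. The paper never re-identifies a parallel set at the sub-path level: after the global step places $\bg([a,b])$ in a bounded neighborhood of the single cone $V(\bg(a),\tst{\tau_+})$ (for an auxiliary enlarged $\T$-set), it applies the triangle inequality for $\Delta$-lengths once more to conclude that $\bg(a)\bg(t)$ is $\T'$-longitudinal as soon as $t-a$ exceeds some $R$, and symmetrically that $\bg(t)\bg(b)$ is $\T'$-longitudinal when $b-t\ge R$. For an interior sub-interval it passes to the midpoint $t$ of $[t_1,t_2]$ and applies the two one-sided statements to the sub-paths $\gamma|_{[a,t]}$ and $\gamma|_{[t,b]}$, whose projected endpoints are now longitudinal by the foregoing; Lemma~\ref{lem:con_longitudinal} then concatenates to give longitudinality of $\bg(t_1)\bg(t_2)$ when $t_2-t_1\ge 2R$. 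The orientation is thus carried inward from the endpoints via the cone, and the direction of an interior segment is never recognized from scratch.

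Your plan to close the gap via linear divergence plus Busemann monotonicity does not work as sketched. The claimed linear divergence of distinct parallel sets of type $\tmod$ is false in general: two such parallel sets sharing an ideal simplex (e.g., two maximal flats $\ell_1\times\ell_2$ and $\ell_1\times\ell_2'$ in $\H^2\times\H^2$ with $\ell_2,\ell_2'$ asymptotic) stay within bounded Hausdorff distance over an unbounded half-flat, so boundedness over a long stretch does not force $P(\sigma_-,\sigma_+)=\pt$. More seriously, even granting $P(\sigma_-,\sigma_+)=\pt$, selecting $\sigma_+=\tau_+$ (as opposed to $\tau_-$ or, when $\dim\tmod\ge 1$, another $\tmod$-simplex in the singular sphere of $\pt$) is precisely the longitudinality being proved, and Busemann monotonicity of $\g$ toward $\xi_{\tau_+}$ is not an independent input but essentially equivalent to it: the global cone containment bounds the Busemann function along $\bg$ but does not by itself yield monotonicity at scale $R'$. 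The paper's midpoint decomposition is what breaks this circularity.
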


\begin{proof}
Let $\Theta'',\T'''\subset\tmod$ be auxiliary subsets such that $\Int{\Theta'}\supset\T'''$,  $\Int{\T'''} \supset \T''$, and $\Int{\T''} \supset \T$. Note that when $(b-a)$ is sufficiently large, the \tedl\ asserts that $\bg(a)\bg(b)$ is $\Theta''$-regular, which in turn makes ${\bg(a)\bg(b)}$ $\Theta''$-longitudinal. Therefore, it follows that
\[ \bg([a,b]) \subset \nbhd{c+\delta}{\tdddd{\bg(a),\bg(b)}} \subset  \nbhd{c+\delta}{V\left(\bg(a),\ttttst{\tau_+}\right)},\]
where $c = c(\Theta'',\T''',D+\delta)$ is the constant as in Theorem \ref{cd}. 

Let $t\in [a,b]$ be any point. From above, we get $d(\bg(t), V\left(\bg(a),\ttttst{\tau_+}\right)) \le c+\delta$. Using the \tedl\ again, we obtain that when $t-a \ge R \gg L(c+\delta)$, then
\[ \bg(t) \in V\left(\bg(a),\ttst{\tau_+}\right),\]
i.e. $\bg(a)\bg(t)$ is $\T'$-longitudinal. By reversing the direction of $\g$, we also get that when $b-t \ge R$, then $\bg(t)\bg(b)$ is $\T'$-longitudinal.

For arbitrary $t_1,t_2\in[a,b]$, we let $t = (t_2-t_1)/2$. The same argument applied to the paths $\gamma([a,t])$, $\gamma([t,b])$ implies that when $t - t_1 \ge R$, and $t_2 - t \ge R$, then $\bg(t_1)\bg(t)$ and $\bg(t)\bg(t_2)$ are $\T'$-longitudinal segments. Using Lemma \ref{lem:con_longitudinal}, we get that $\bg(t_1)\bg(t_2)$ is $\T'$-longitudinal.

Therefore, $\bg(t_1)\bg(t_2)$ is $\T'$-longitudinal whenever $t_2-t_1 \ge 2R$.

\medskip
After enlarging $\T'$, the second part follows from Theorem \ref{cd}.
\end{proof}

We now turn to the discussion of replacements.

\begin{defn}[Morse quasigeodesic replacements] 
Let $\gamma : I \rightarrow X$ be an $(L,A,\Theta,D)$-Morse quasigeodesic, and let $[t_1,t_2]$ be a subinterval of $I$. Let $\gamma':{[t_1,t_2]}\rightarrow X$ be another $(L',A',\Theta',D')$-Morse quasigeodesic such that $\gamma|_{\{t_1,t_2\}} = \gamma'|_{\{t_1,t_2\}}$. An \emph{$(L',A',\Theta',D')$-Morse quasigeodesic replacement} of $\gamma|_{[t_1,t_2]}$ is the concatenation of $\gamma|_{I - (t_1,t_2)}$ with $\gamma'|_{[t_1,t_2]}$.
\end{defn}

\begin{thm}[Replacement lemma]\label{replacements}
Uniform Morse quasigeodesic replacements are uniformly Morse.
\end{thm}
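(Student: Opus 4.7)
The strategy is to verify that the concatenation $\tilde\gamma$ of $\gamma|_{I \smallsetminus (t_1, t_2)}$ with $\gamma'|_{[t_1, t_2]}$ is a uniform quasigeodesic which is uniformly regular on sufficiently long subsegments, and then to invoke the higher-rank Morse lemma (Theorem \ref{morse_lemma}) to conclude that every subsegment of $\tilde\gamma$ is uniformly close to a diamond with the correct tips.

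I would first establish that $\gamma([t_1, t_2])$ and $\gamma'([t_1, t_2])$ lie in a common bounded neighborhood of a single diamond. The Morse property of $\gamma|_{[t_1, t_2]}$ yields a $\Theta$-diamond $\td{x_{t_1}, x_{t_2}}$ whose tips are $D$-close to $\gamma(t_1), \gamma(t_2)$ and whose $D$-neighborhood contains $\gamma([t_1, t_2])$; analogously for $\gamma'$ with its own parameters. Applying the uniform continuity of diamonds (Theorem \ref{cd}) to enlarge both of these diamonds to a common Weyl-convex type $\Theta'' \supset \Theta \cup \Theta'$, one finds a $\Theta''$-diamond $\tddd{y_{t_1}, y_{t_2}}$, and a constant $C_0$ depending only on the input data, whose $C_0$-neighborhood contains both $\gamma([t_1, t_2])$ and $\gamma'([t_1, t_2])$.

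Next, I would compare the parameterizations of the middle piece: for every $s \in [t_1, t_2]$ one can find $\phi(s) \in [t_1, t_2]$ with $d(\gamma(\phi(s)), \gamma'(s)) \le 2 C_0$, and the two-sided quasigeodesic bounds for $\gamma$ and $\gamma'$ combined with the triangle inequality force $\phi(s)$ to be related to $s$ by uniform bi-Lipschitz constants. This approximation reduces the verification of the two hypotheses of Theorem \ref{morse_lemma} for $\tilde\gamma$ to corresponding statements for $\gamma$. For the uniform quasigeodesic estimate, the mixed case $a \le t_1 \le b \le t_2$ is handled by replacing $\tilde\gamma(b) = \gamma'(b)$ by the $2C_0$-close point $\gamma(\phi(b))$ and applying the quasigeodesic lower bound for $\gamma$ with $|\phi(b) - a|$ comparable to $|b - a|$. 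For uniform $(\Theta''', \bar B)$-regularity on long subsegments, the triangle inequality for $\Delta$-lengths, used as in the proofs of Propositions \ref{small_angles_preffered} and \ref{tool}, absorbs the $2C_0$ perturbation into a slight enlargement of $\Theta''$ to some $\Theta'''$.

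Once these hypotheses are in place, Theorem \ref{morse_lemma} applied to arbitrary subsegments of $\tilde\gamma$ produces approximating $\Theta'''$-diamonds with tips within $\bar D$ of $\tilde\gamma(a), \tilde\gamma(b)$, which is precisely the defining condition of a uniformly Morse quasigeodesic, with all constants depending only on the input Morse data of $\gamma$ and $\gamma'$. I expect the main obstacle to be the parameter-comparison step and its propagation to the verification of regularity in the mixed configuration: naive triangle-inequality estimates do not immediately yield lower bounds on distances or $\xi$-angles when the two endpoints lie on different pieces of the concatenation, so one must carefully exploit the common diamond containment together with the $\Delta$-length triangle inequality to push these estimates through uniformly.
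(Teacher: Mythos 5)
Your first reduction---finding a single $\Theta''$-diamond whose $C_0$-neighborhood contains both $\gamma([t_1,t_2])$ and $\gamma'([t_1,t_2])$---is correct and is also how the paper's proof begins. The problem is the reparameterization step: you claim that for every $s$ there is $\phi(s)$ with $d(\gamma(\phi(s)),\gamma'(s))\le 2C_0$, i.e.\ that the two middle pieces fellow-travel. This is false in higher rank, because diamonds are thick. Take a maximal flat $F$ in a rank $\ge 2$ space, let $\gamma|_{[t_1,t_2]}$ parametrize a $\Theta$-regular geodesic segment $pq$ of length $T$, and let $\gamma'|_{[t_1,t_2]}$ parametrize the broken geodesic $p\to m\to q$ where $m$ lies in $\diamondsuit_\Theta(p,q)$ but at distance $\epsilon T$ from the segment $pq$, with $\epsilon$ small enough that $pm$ and $mq$ stay $\Theta$-regular. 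Both are uniform Morse quasigeodesics with the same endpoints lying inside the same $\Theta$-diamond, yet their Hausdorff distance grows like $\epsilon T$; no uniform $C_0$ works. Since the bi-Lipschitz control on $\phi$, the quasigeodesic lower bound in the mixed case $a\le t_1\le b\le t_2$, and the $\Theta'''$-regularity estimate for mixed subsegments all hinge on this nonexistent $\phi$, the argument does not close.

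The paper gets around this precisely by \emph{not} comparing $\gamma'$ to the image of $\gamma$, but to the parallel set $P(\tau_-,\tau_+)$ spanned by the $\Theta$-diamond of $\gamma$ over the \emph{full} interval $[a,b]$. One projects both $\gamma$ and $\gamma'$ to $P(\tau_-,\tau_+)$ and uses Proposition \ref{nd2} to show that, for sufficiently spaced parameters, the projected segments are $\Theta'$-longitudinal; by Lemma \ref{lem:con_longitudinal}, longitudinal segments concatenate to longitudinal segments. In the mixed case $r_1\in[a,t_1]$, $r_2\in[t_1,t_2]$, the projected points $\bar{\gamma}(r_1)$ and $\bar{\gamma}'(r_2)$ then lie in opposite $\Theta'$-cones with common tip $\bar{\gamma}(t_1)=\bar{\gamma}'(t_1)$, giving the uniform angle bound $\angle_{\bar{\gamma}(t_1)}(\bar{\gamma}(r_1),\bar{\gamma}'(r_2))\ge\alpha'$; Euclidean triangle comparison then supplies the quasigeodesic lower bound, and longitudinality together with Theorem \ref{cd} supplies the diamond proximity (while $r_1\in[a,t_1]$, $r_2\in[t_2,b]$ is handled by nesting diamonds). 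The longitudinal structure on the parallel set is what replaces the fellow-traveling your sketch relies on; without it, the mixed-case estimates cannot be pushed through.
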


\begin{proof}
Suppose that $I = [a,b]$ is some interval. Let $\gamma : I \rightarrow X$ be an $(L,A,\Theta,D)$-Morse quasigeodesic, and let $\rho : I \rightarrow X$ be obtained by replacing of $\gamma|_{[t_1,t_2]}$ by a $(L',A',\Theta',D')$-Morse quasigeodesic $\gamma' : [t_1,t_2] \rightarrow X$. Let $\Theta''$ be  subset of $\ost{\tmod}$ which contains $\Theta$ and $\Theta'$. Replacing the parameters $(L,A,\Theta,D)$ and $(L',A',\Theta',D')$ by $(L'',A'',\Theta'', D'')$, where $L'' = \max\{L,L'\}$, $A'' = \max\{A,A'\}$, $D'' = \max\{ D,D'\}$, and some $\T'' \supset \T\cup\T'$, we may assume that $(L,A,\Theta,D) = (L',A',\Theta',D')$ to begin with.

\medskip
By the definition, there exists a diamond $\td{x_1,x_2}$ with $d(x_1,\g(a))\le D$, $d(x_2,\g(b)) \le D$ such that $\g([a,b]) \subset \nbhd{D}{\td{x_1,x_2}}$. Without loss of generality, we may assume that $x_1\ne x_2$. The diamond $\td{x_1,x_2}$ spans a unique parallel set $\pt$ such that $x_2 \in\V{x_1}{\tau_+}$. We denote the nearest point projections of $\g(t)$ and $\g'(t)$ to $\pt$ by $\bg(t)$ and $\bgp(t)$, respectively.

By the \tedl\ we get that when $(b-a)$ is sufficiently large, $(b-a) \ge C(\Theta,\Theta',D)$, then ${\bg(a)\bg(b)}$ is $\T'$-longitudinal\footnote{the nearest point projection might not send $\g(a)$ (resp. $\g(b)$) to $x_1$ (resp. $x_2$), but sends into a $D$-neighborhood of $x_1$ (resp. $x_2$).}. 
Using Proposition \ref{nd2}, when $(t_2 - t_1) \ge R'$, then ${\bg(t_1)\bg(t_2)} = {\bgp(t_1)\bgp(t_2)}$ is also $\T'$-longitudinal. From Theorem \ref{cd} we get a constant $D'$ such that $\g'([t_1,t_2]) \subset\nbhd{D'}{\pt}$.

We prove that any subpath $\rho |_{[r_1,r_2]}$ is uniformly close to a diamond.
From above, if $(r_2 - r_1)\ge R'$, for $r_1,r_2\in I$, then $\bg(r_2) \in V(\bg(r_1),\ttst{\tau_+})$. This also holds for $\g'$ and $r_1,r_2\in [t_1,t_2]$ for a bigger $R'$ because $\g'([t_1,t_1])$ is $D'$-close to $\pt$, and ${\bgp(t_1)\bgp(t_2)}$ is longitudinal. Also, note that in this case, possibly after enlarging $\T'$,  $\g|_{[r_1,r_2]}$ and $\g'|_{[r_1,r_2]}$ become uniformly close to $\tdd{\bg(r_1),\bg(r_2)}$ and $\tdd{\bg'(r_1),\bg'(r_2)}$, respectively (Theorem \ref{cd}).

Clearly, when both $r_1,r_2$ belong to one of the sets $[a,t_1]$, $[t_1,t_2]$, $[t_2,b]$, then $\rho({[r_1,r_2]})$ is uniformly close to a diamond. Therefore, the following are only nontrivial cases.

\begin{case1}
$r_1\in [a,t_1]$ and $r_2 \in [t_1,t_2]$. 
\end{case1}

In this case, if $(t_1 - r_1)\ge R'$ and $(r_2-t_1)\ge R'$, 
then from the discussion above we get
\[ \bg(t_1) \in V(\bg(r_1),\ttst{\tau_+}), \quad \bg'(r_2) \in V(\bg(t_1),\ttst{\tau_+}).\]
From convexity of cones, it follows that 
\[ \bg'(r_2) \in V({\bg(r_1)},\ttst{\tau_+}).\]
Since $\tdd{\bg(r_1),\bg(t_1)}$ and $\tdd{\bg'(t_1),\bg'(r_2)}$  are subsets of $\tdd{\bg(r_1),\bg'(r_2)}$, $\rho|_{[r_1,r_2]}$ is uniformly close to $\tdd{\bg(r_1),\bg'(r_2)}$. 

Now we prove the quasiisometric inequality for $\rho(r_1)$ and $\rho(r_2)$. Since the points $\bg(r_1)$ and $\bg'(r_2)$ belong to two opposite cones with tip $\bg(t_1)=\bg'(t_1)$,
\[ \angle_{\bg(t_1)}\left(\bg(r_1),\bg'(r_2)\right) \ge \alpha',\]
where $\alpha' = \titsangle(\T',\partial\st{\tmod})$.
Comparing the geodesic triangle $\triangle\left(\bg(r_1),\bg(t_1),\bg'(r_2)\right)$ with a Euclidean one, we get
\begin{equation*} 
d\left(\bg(r_1),\bg'(r_2)\right) 
\ge  \frac{\sin\alpha'}{2}\left(d\left(\bg(r_1),\bg(t_1)\right) + d\left(\bg'(t_1),\bg'(r_2)\right)\right).
\end{equation*}
It follows that
\begin{multline*}
d(\rho(r_1),\rho(r_2)) = d\left(\g(r_1),\g'(r_2)\right)
\ge \frac{\sin\alpha'}{2} \left({d\left({\g(r_1)},{\g(t_1)}\right) + d\left({\g'(t_1)},{\g'(r_2)}\right)}\right)\\ - 2D'(1+\sin\alpha')
\ge \frac{\sin\alpha'}{2L} |r_1-r_2| - \left(4 D' + A\right).
\end{multline*}
In the last inequality, we are using the quasigeodesic data for the paths $\gamma$ and $\g'$.

\begin{case2}
$r_1\in [t_1,t_2]$ and $r_2 \in [t_2,b]$. 
\end{case2}

This case is settled by reversing the direction of $\gamma$ in the case 1.

\begin{case3}
$r_1\in [a,t_1]$ and $r_2 \in [t_2,b]$.
\end{case3}

The quasiisometric inequality for $\rho(r_1)$ and $\rho(r_2)$ is clear, since
\[d\left( \rho(r_1),\rho(r_2) \right) = d\left( \g(r_1),\g(r_2) \right) \ge \frac{|r_1 - r_2|}{L} - A.\]
It remains only to show that the image $\rho([r_1,r_2])$ is uniformly close to a $\T'$-diamond.

We know that $\g([r_1,r_2])$ is $D$-close to a diamond $\td{x_1,x_2}$ satisfying $d(x_i,\g(r_i))\le D$, and that $\g'([t_1,t_2])$ is $D$-close to a diamond $\td{y_1,y_2}$ satisfying $d(y_i,\g'(t_i))\le D$, for $i=1,2$. Since $\g(t_i) = \g'(t_i)$, it follows that the points $y_1$ and $y_2$ are $2D$-close to $\td{x_1,x_2}$. Let $\pt$ be the unique parallel set spanned by $\td{x_1,x_2}$ satisfying $x_2 \in\V{x_1}{\tau_+}$. Then,
\[
y_1y_2 \subset \nbhd{2D}{\pt}.
\]

Let $\bar{y}_1$, $\bar{y}_2$ denote the projections of $y_1,y_2$, respectively, in $\pt$. Note that the points $y_1,y_2$ are $D$-close to $\g([r_1,r_2])$. Using Proposition \ref{nd2}, it follows that when $d(y_1,y_2)$, or equivalently $(t_2-t_1)$, is sufficiently large, then $\bar{y}_1\bar{y}_2$ is $\T'$-longitudinal. In addition, note that the points $\bar{y}_1,\bar{y}_2$ are $4D$-close to the cones $V(x_1,\tst{\t_+}),V({x_2,\tst{\t_-}})$, respectively. Using the \tedl\, it follows that when $d(x_1,\bar{y}_1)$ and $d(x_2,\bar{y}_2)$, or equivalently $(t_1-r_1)$ and $(r_2 - t_2)$, are large enough, then $x_1\bar{y}_1$ and $\bar{y}_2x_2$ are $\T'$-longitudinal. Therefore,
\[ \bar{y}_1\bar{y}_2 \subset \tdd{x_1,x_2}.\]
Using Theorem \ref{cd}, there is a constant $c$ which depends only on $D,\T',\T''$ such that 
\[\tddd{y_1,y_2} \subset \nbhd{c}{\tdd{\bar{y}_1,\bar{y}_2}} \subset \nbhd{c}{\tdd{x_1,x_2}}.\]
Therefore, $\rho([r_1,r_2])$ is $(c+D)$-close to $\tdd{x_1,x_2}$.
\end{proof}

\begin{rem}
The replacement lemma is false if we relax the Morse condition. It is not generally true that a \emph{uniform quasigeodesic replacement} of an (ordinary) quasigeodesic in a $\catk$ space, $k\ge 0$, is a uniform quasigeodesic. See the example below. However, if $k <0$, then the ordinary quasigeodesics are Morse quasigeodesics, so the replacement lemma for ordinary quasigeodesics holds.
\end{rem}

\begin{exmp}
Let $Y = \R^2$, $\g$ be the $x$-axis. For $r\ge 0$, define $\g'_r : [-r,r] \rightarrow \R^2$ as in Figure \ref{fig:ex4.6} which is a $(4,0)$-quasigeodesic. If $\phi_r$ is the replacement, then $\phi_r(2r) = \phi(r - k_r)$, for some number $k_r > 0$ (observe the point $(2r,0)$). However, if $\phi$ is an $(l,a)$-quasigeodesic, then $d(\phi_r(2r) - \phi(r - k_r)) \ge r/l - a$, which is false for large $r$.

\begin{figure}[h]
\begin{center}
\vspace{.2in}
\begin{tikzpicture}
    \node[anchor=south west,inner sep=0] (image) at (0,0,0) {\includegraphics[width=4.5in]{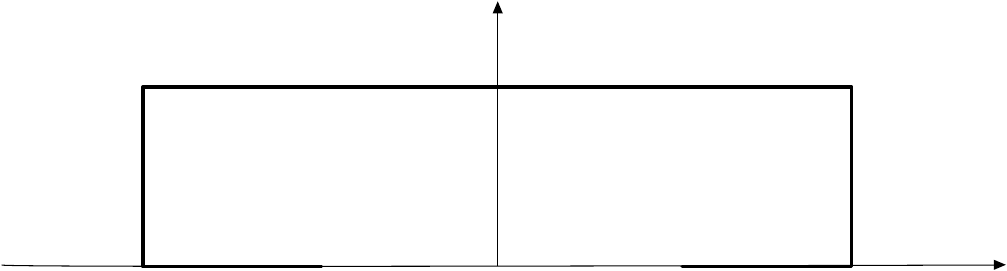}};
    \begin{scope}[x={(image.south east)},y={(image.north west)}]
        \node at (.325,.1) {$ \scriptstyle (-r,0)$};
        \node at (.67,.1) {$\scriptstyle(r,0)$};
        \node at (.088,.1) {$\scriptstyle(-2r,0)$};
        \node at (.89,.1) {$\scriptstyle(2r,0)$};
        \node at (.088,.7) {$\scriptstyle(-2r,r)$};
        \node at (.89,.7) {$\scriptstyle(2r,r)$};
    	\node at (.98,.1) {$x$};
	\node at (.515,.96) {$y$};
	\node at (.3,.76) {$\gamma'$};
    \end{scope}
\end{tikzpicture}
\vspace{.2in}
\end{center}
\caption{}
\label{fig:ex4.6}
\end{figure}
\end{exmp}

We can also replace a Morse quasigeodesic at multiple segments.

\begin{thm}[Generalized replacement lemma]\label{replacements2}
Let $\g : [a,b]\r X$ be an $(L,A,\T,D)$-Morse quasigeodesic, and let $a=t_0 \le t_1 \le \dots\le t_{r_0-1} \le t_{r_0} = b$. For $r = 1,\dots, r_0$, let $\g_r: [t_{r-1},t_r] \r X$ be an $(L',A',\T',D')$-Morse quasigeodesic with $\g_r|_{\{t_{r-1},t_r\}} = \g|_{\{t_{r-1},t_r\}}$. Then the concatenation of  $\g_r$'s is an $(L'',A'',\T'',D'')$-Morse quasigeodesic where $(L'',A'',\T'',D'')$ depends only on $(L,A,\T,D)$ and $(L',A',\T',D')$.
\end{thm}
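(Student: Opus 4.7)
The plan is to iterate Theorem \ref{replacements} without allowing the constants to blow up with $r_0$, by anchoring all estimates to a single ``reference parallel set" provided by the original $\gamma$, rather than running Theorem \ref{replacements} recursively on the output of each intermediate step. The key point is that the proof of Theorem \ref{replacements} shows the replaced quasigeodesic to stay uniformly close to the same parallel set $P(\tau_-,\tau_+)$ associated with $\gamma$, so the \emph{same} $P$ can be used for every replacement simultaneously.

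First I would normalize parameters exactly as at the start of the proof of Theorem \ref{replacements}: set $L''_0 = \max(L,L')$, $A''_0 = \max(A,A')$, $D''_0 = \max(D,D')$, and pick an $\iota$-invariant, Weyl-convex, compact $\Theta''_0 \subset \ost{\tmod}$ whose interior contains $\Theta \cup \Theta'$, so that $\gamma$ and each $\gamma_r$ are $(L''_0, A''_0, \Theta''_0, D''_0)$-Morse. Fix the diamond $\td{x_-, x_+}$ supplied by the Morse property of $\gamma$ and the associated parallel set $P = P(\tau_-, \tau_+)$; denote nearest-point projection onto $P$ by $\pi_P$. Next I would prove a uniform-closeness statement: there exists $c_0$, depending only on $(L,A,\Theta,D)$ and $(L',A',\Theta',D')$, such that $\gamma_r([t_{r-1}, t_r]) \subset \nbhd{c_0}{P}$ for every $r$. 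The argument splits on $\ell_r = t_r - t_{r-1}$. If $\ell_r$ exceeds the threshold $R'$ of Proposition \ref{nd2} applied to $\gamma_r$, then the Morse diamond of $\gamma_r$ has tips near the endpoints $\gamma(t_{r-1}), \gamma(t_r) \in \nbhd{D}{P}$, and Theorem \ref{cd} forces it to lie uniformly close to $\tdd{\pi_P \gamma(t_{r-1}), \pi_P \gamma(t_r)} \subset P$. If $\ell_r \le R'$, the $(L',A')$-quasi\-geodesic bound places $\gamma_r([t_{r-1},t_r])$ inside an $(L''_0 R' + A''_0 + D)$-neighborhood of $P$ outright.

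Finally I would verify the quasi-isometry inequality and the diamond property of $\rho = \gamma_1 \star \cdots \star \gamma_{r_0}$ by running the three-case analysis of the proof of Theorem \ref{replacements} on arbitrary $s_1 < s_2 \in [a,b]$ lying in subintervals $[t_{k_1-1}, t_{k_1}]$ and $[t_{k_2-1}, t_{k_2}]$. The projection sequence $\pi_P \gamma(t_0), \pi_P \gamma(t_1), \dots, \pi_P \gamma(t_{r_0})$ is uniformly $\Theta''$-longitudinal along $P$ (for an enlarged $\Theta''$) by the Morse property of $\gamma$ and Proposition \ref{nd2}, and each $\pi_P \gamma_r$ is longitudinal on $P$ between sufficiently separated points. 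Combining these via the concatenation Lemma \ref{lem:con_longitudinal} produces a global $\Theta''$-longitudinal relation between the projections of $\rho(s_1)$ and $\rho(s_2)$ whenever $|s_2 - s_1|$ exceeds a uniform threshold, and hence the linear lower bound $d(\rho(s_1), \rho(s_2)) \ge (\sin \alpha'/ 2 L''_0) |s_1 - s_2| - C$ as in Case 1 of the proof of Theorem \ref{replacements}. Theorem \ref{cd} then contains $\rho([s_1, s_2])$ in a uniform neighborhood of a single $\Theta''$-diamond, giving the Morse property with parameters independent of $r_0$.

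The main obstacle is the presence of clusters of consecutive short subintervals (those with $\ell_r < R'$), on which the Morse structure of $\gamma_r$ does not directly supply longitudinal projections via Proposition \ref{nd2}. I would handle these by observing that any such cluster contributes only bounded arc-length distortion and bounded Hausdorff displacement from $P$, both controlled by $L''_0 R' + A''_0$ and $c_0$, so that the failure of longitudinality over a short cluster appears merely as an additive error, absorbable into the output constants $A''$ and $D''$. Because this bound depends only on $R'$ and the original Morse parameters, it is independent of $r_0$; this is precisely where anchoring to the single reference parallel set $P$ (rather than iterating Theorem \ref{replacements} recursively) prevents the blow-up of constants, and is the crux of the whole argument.
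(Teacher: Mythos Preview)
The paper omits the proof entirely, saying only that it ``closely follows the proof of the previous one''. Your proposal is a correct and reasonably detailed fleshing-out of exactly that approach: you carry over the structure of the proof of Theorem~\ref{replacements}, anchoring all estimates to the single parallel set $P(\tau_-,\tau_+)$ coming from the original $\gamma$, and you correctly identify that this is what prevents the constants from depending on $r_0$ (whereas a naive recursive application of Theorem~\ref{replacements} would degrade them at each step).

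One small remark on the obstacle you flag. Your concern about clusters of short subintervals is legitimate, but the resolution is slightly simpler than you suggest. For the longitudinality between $\bar\rho(s_1)$ and $\bar\rho(s_2)$ you never need every consecutive pair $\bar\gamma(t_{r-1})\bar\gamma(t_r)$ to be longitudinal; you only need longitudinality of the three pieces $\bar\rho(s_1)\bar\gamma(t_{k_1})$, $\bar\gamma(t_{k_1})\bar\gamma(t_{k_2-1})$, and $\bar\gamma(t_{k_2-1})\bar\rho(s_2)$, and the middle one comes directly from Proposition~\ref{nd2} applied to $\gamma$ itself (which does not see the partition). When one of the outer pieces is short you absorb it, as you say, into the additive constants via the triangle inequality for $\Delta$-lengths. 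So the ``cluster'' issue does not actually arise in the middle stretch, only at the two ends, which keeps the bookkeeping lighter than your last paragraph implies.
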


The proof of this theorem closely follows the proof of the previous one and, we  are omitting the details.

\subsection{Morse subgroups}\label{sec:morsesubgroups}

We first review the notion of Morse subgroups of $G$. See \cite[Sections 7.4, 7.5]{Kapovich:2014aa}.

\medskip
For a finitely generated group $H$ with a finite generating set $A$, we denote by $\C{H,A}$ the associated \emph{Cayley graph} equipped with the word metric. We usually suppress ``$A$'' from the notation, and denote the Cayley graph by $\C{H}$. A finitely generated group $H$ is called \emph{hyperbolic} if $\C{H}$ is hyperbolic.

A metric space $Y$ is called \emph{$(l,a)$-quasigeodesic} if any pair of points can be connected by an $(l,a)$-quasigeodesic. $Y$ is called \emph{quasigeodesic}, if it is $(l,a)$-quasigeodesic for some constants $l,a$. For a finitely generated subgroup $H < G$ and a chosen base point $x\in X$, there is a natural map $o_x :\C{H} \rightarrow X$ induced by the orbit map $H \rightarrow Hx$. A subgroup $H< G$ is called \emph{undistorted} (in $G$), if some (any) $o_x$ is a quasiisometric embedding. General quasiisometric embeddings of a quasigeodesic space into a symmetric space tend to be ``{bad}''. However, one obtains a good control on these embeddings when they are \emph{Morse}; we review this notion below.

\begin{defn}[Morse embeddings, \cite{Kapovich:2014aa}]
Let $X$ be a symmetric space of noncompact type. A map $f:Y\rightarrow X$ from a quasigeodesic space $Y$ is called \emph{$\Theta$-Morse embedding}  if it sends uniform quasigeodesics in $Y$ to uniform $\Theta$-Morse quasigeodesics in $X$. Moreover, $f$ is called \emph{$\tmod$-Morse embedding} if it is $\Theta$-Morse embedding for some $\Theta$. 
\end{defn}

Now we state the notion of Morse subgroups.

\begin{defn}[Morse subgroups, \cite{Kapovich:2014aa}]
A finitely generated subgroup $\Gamma$ of $G$ is called \emph{$\tmod$-Morse}  if, for an(y) $x\in X$, the map $o_x : \C{H}\rightarrow X$ of $\C{\Gamma}$ into $X$ induced by $\Gamma\rightarrow Hx$ is $\tmod$-Morse.
\end{defn}

Note that Morse subgroups are undistorted. 

\medskip
Every $\tmod$-Morse subgroup $\Gamma$ induces a canonical boundary embedding $\beta : \di \Gamma \r \flag{\tmod}$, see \cite{Kapovich:2014aa, MR3736790}. The image of $\beta$ in $\flag{\tmod}$ is called the \emph{flag limit set} of $\Gamma$, and will be denoted by $\limflag{\Gamma}$.

Moreover,   $\tmod$-Morse subgroups are \emph{uniformly $\tmod$-regular} (see \cite{MR3736790}) and, hence, 
the accumulation set in $\di X$ of any orbit $\Gamma x$ contains only points whose types are elements of $\T$, for some compact 
Weyl-convex subset $\T\subset\ost{\tmod}$. The smallest such $\T$ will be denoted by $\T_\Gamma$. 

\begin{rem}
A finitely generated uniformly $\tmod$-regular and undistorted subgroup $\Gamma <G$ is called a \emph{$\tmod$-URU} subgroup. 
The equivalence of $\tmod$-Morse and $\tmod$-URU is the main result of \cite{Kapovich:2014ab}; see also 
\cite{MR3736790}.
\end{rem}

\begin{prop}\label{prop1}
Let $\Gamma$ be a $\tmod$-Morse subgroup, let $\Lambda'$ be any compact set in $\flag{\tmod}$ whose interior contains $\Lambda = \limflag{\Gamma}$, and let $\Theta'$ be any compact set containing $\Theta = \Theta_\Gamma(x)$ in its interior. There exists a number $S>0$ such that any $\g\in \Gamma$ satisfying $d(x, \g x) > S$ also satisfies $\g x\in \VVt{x}$, for some $\tau \in \Lambda'$.
\end{prop}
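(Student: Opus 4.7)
The plan is a contradiction/compactness argument using flag convergence. Suppose the conclusion fails: there exists a sequence $\g_n \in \Gamma$ with $d(x, \g_n x) \to \infty$ and $\g_n x \notin V(x, \ttst{\tau})$ for any $\tau \in \Lambda'$. After extracting a subsequence, $\g_n x$ converges in the visual compactification $\bar X$ to some $\xi \in \di X$. Uniform $\tmod$-regularity of the Morse subgroup $\Gamma$, together with the definition of $\Theta = \Theta_\Gamma(x)$, forces $\theta(\xi) \in \Theta$, so $\xi$ is $\tmod$-regular; let $\tau_\infty \in \flag{\tmod}$ be the unique $\tmod$-simplex with $\xi \in \ost{\tau_\infty}$. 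By the asymptotic-embedding property of Morse subgroups (the defining property of the flag limit set via the boundary embedding $\beta: \di \Gamma \to \flag{\tmod}$), we have $\tau_\infty \in \Lambda$.

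Next I would transfer visual convergence in $\bar X$ to convergence of simplices in $\flag{\tmod}$. The unit tangent directions at $x$ converge, $v_n \to v_\infty$ in $S_x X$, where $v_\infty$ points toward $\xi$. Continuity of the $K$-equivariant simplex-assignment map from the $\tmod$-regular part of $S_x X$ to $\flag{\tmod} = G/P_{\tmod}$ (where $K$ is the stabilizer of $x$) then yields $\tau_n \to \tau_\infty$ in $\flag{\tmod}$, where $\tau_n$ is the simplex characterized by $\g_n x \in V(x, \ost{\tau_n})$.

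Since $\tau_\infty \in \Lambda \subset \Int{\Lambda'}$ we get $\tau_n \in \Lambda'$ for all large $n$, and since $\theta(v_n) \to \theta(\xi) \in \Theta \subset \Int{\Theta'}$ the direction types eventually lie in $\Theta'$. Together these give $\g_n x \in V(x, \ttst{\tau_n})$ with $\tau_n \in \Lambda'$ for all sufficiently large $n$, contradicting the assumption. The main non-trivial ingredient is the continuity of the simplex-assignment in the second step: it lets us upgrade direction-space convergence in $S_x X$ to convergence of $\tmod$-simplices in $\flag{\tmod}$, and is what makes the compactness argument work.
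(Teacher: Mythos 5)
Your proof is correct and takes essentially the same compactness-and-contradiction route as the paper: both extract a divergent sequence violating the conclusion, pass to a subsequential limit, and contradict that $\Theta=\Theta_\Gamma$ and $\Lambda=\limflag{\Gamma}$ sit in the interiors of $\Theta'$ and $\Lambda'$. The only cosmetic difference is that the paper splits the contradiction into two steps (first $\Theta'$-regularity, then the $\Lambda'$-cone condition), whereas you combine them by explicitly invoking convergence in $\bar X$ and the continuity of the $\tmod$-simplex assignment on the $\tmod$-regular part of $S_xX$.
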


\begin{proof}
We first prove that there exists $S'> 0$ such that $d(x,\g x) > S'$ implies that $(x,\g x)$ is $\Theta'$-regular. 
Suppose that $S'$ does not exist; then, 
there is an unbounded sequence $(\g_i)_{i\in\mathbb{N}}$ in $\Gamma$ such that $(x, \g_i x)$ is not $\Theta'$-regular for all $i$. 
Then, $(\g_i x)_{i\in\mathbb{N}}$ subconverge to an ideal point whose type $\not\in \Int{\T'}$. This can not happen because the interior of $\Theta'$ contains $\Theta$.

Next we prove that $S$ exists. Assuming that it does not exist, we get an unbounded sequence $(\g'_i)_{i\in\mathbb{N}}$ in $\Gamma$ such that $\g'_i x\not\in \VVt{x}$, for all $i\in\mathbb{N}$ and $\tau\in \Lambda'$.
After extraction we may assume that $(x,\g'_ix)$ is $\Theta'$-regular, for all $i$. 
But then, $(\g'_i x)_{i\in\mathbb{N}}$ does not accumulate in any simplex in the interior of $\Lambda'$ i.e. $\Gamma$ has a limit simplex outside $\Lambda$,
but this gives a contradiction.
\end{proof}

\subsection{Residual finiteness}\label{sec:RF}

An important feature shared by many finitely generated subgroups of $G$ is the \emph{residual finiteness} property which enables us to obtain finite index subgroups which avoid a given finite set of elements.

\begin{defn}[Residual finiteness]
A group $H$ is called \emph{residually finite (RF)} if it satisfies one of the following equivalent conditions: (1) Given a finite subset $S\subset H \setminus  \{1_H\}$, there exists a finite index subgroup $F < H$ such that $F\cap S=\emptyset$. (2) Given an element $h\in H \setminus \{1_H\}$, there exits a finite group $\Phi$ and a homomorphism $\phi : H \rightarrow \Phi$ such that $\phi(h) \ne 1_\Phi$. (3) The intersection of finite index subgroups of $H$ is trivial.
\end{defn}

Residual finiteness of Morse subgroups is a corollary to the following celebrated theorem.

\medskip
Let $R$ be a commutative ring with unity, and let $GL(N,R)$ denote the group (with multiplication) of non-singular $N\times N$ matrices with entries in $R$. Then,

\begin{thm}[A. I. Mal'cev, \cite{MR0003420}]
Finitely generated subgroups of $GL(N,R)$ are RF.
\end{thm}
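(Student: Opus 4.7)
The plan is to run the classical reduction-to-a-finite-residue-field argument. Given a finitely generated subgroup $\Gamma < GL(N,R)$ with generating set $\{g_1,\dots,g_k\}$ and a fixed element $h \in \Gamma$ with $h \ne 1$, I want to produce a homomorphism $\phi$ from $\Gamma$ onto a finite group with $\phi(h) \ne 1$; this verifies characterization (2) in the definition of residual finiteness.

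First I would shrink the coefficient ring. Let $R_0 \subset R$ be the unital subring generated by all matrix entries of $g_1^{\pm 1},\dots,g_k^{\pm 1}$. Then $\Gamma \subset GL(N, R_0)$, and $R_0$ is a finitely generated commutative ring, hence a quotient of $\mathbb{Z}[x_1,\dots,x_m]$ for some $m$, and in particular Noetherian. Since $h \ne 1$, at least one entry of the matrix $h - I$ is a nonzero element $r \in R_0$.

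The heart of the proof is to produce a maximal ideal $\mathfrak{m} \subset R_0$ with $r \notin \mathfrak{m}$ and with finite residue field. For the first requirement, consider the localization $R_0[1/r]$; it is nonzero, hence contains a maximal ideal, whose preimage in $R_0$ is a maximal ideal $\mathfrak{m}$ not containing $r$. The second requirement is the only nontrivial input: one invokes the theorem of Jacobson (a form of the Nullstellensatz over $\mathbb{Z}$) asserting that any field which is a finitely generated $\mathbb{Z}$-algebra is finite. Applied to $R_0/\mathfrak{m}$, this shows $R_0/\mathfrak{m} \cong \mathbb{F}_q$ for some prime power $q$. This is where I would expect to cite rather than reprove, since it is a standard commutative algebra fact.

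With $\mathfrak{m}$ in hand, reduction modulo $\mathfrak{m}$ yields a ring homomorphism $R_0 \to \mathbb{F}_q$, which induces a group homomorphism
\[
\phi : \Gamma \hookrightarrow GL(N, R_0) \longrightarrow GL(N, \mathbb{F}_q).
\]
The target is a finite group. The distinguished entry $r$ of $h - I$ maps to a nonzero element of $\mathbb{F}_q$, so the corresponding entry of $\phi(h) - I$ is nonzero in $\mathbb{F}_q$, giving $\phi(h) \ne 1$. This establishes residual finiteness of $\Gamma$. The only genuine obstacle is Jacobson's theorem; the rest of the argument is bookkeeping about finitely generated rings and entrywise reduction of matrices.
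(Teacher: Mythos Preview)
The paper does not prove this theorem; it is stated as Mal'cev's result and simply cited. So there is no ``paper's own proof'' to compare against. Your argument is the standard one and is essentially what one would expect.

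That said, there is a genuine gap in your proof at the level of generality stated. You assert that $R_0[1/r]$ is nonzero, but this holds only when $r$ is not nilpotent. For a general commutative ring $R$, the subring $R_0$ need not be reduced, and the distinguished entry $r$ of $h-I$ can be nilpotent (e.g.\ $R=\mathbb{Z}[\epsilon]/(\epsilon^2)$, $h=\begin{pmatrix}1&\epsilon\\0&1\end{pmatrix}$). In that case $R_0[1/r]=0$ and your construction of $\mathfrak{m}$ collapses. The standard fix is to choose any maximal ideal $\mathfrak{m}\supset\mathrm{Ann}(r)$; Krull's intersection theorem in the Noetherian ring $R_0$ gives $r\notin\mathfrak{m}^n$ for some $n$, and $R_0/\mathfrak{m}^n$ is a finite ring (it is filtered by finite-dimensional vector spaces over the finite field $R_0/\mathfrak{m}$), so one reduces modulo $\mathfrak{m}^n$ rather than $\mathfrak{m}$. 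Your argument as written is correct whenever $R$ is reduced (in particular a field), which covers the paper's application via the adjoint representation into $GL(\mathfrak{g})\subset GL(\dim\mathfrak{g},\mathbb{R})$.

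A minor point: the pullback to $R_0$ of a maximal ideal of $R_0[1/r]$ is a priori only prime. To see it is maximal, apply the Nullstellensatz to the finitely generated $\mathbb{Z}$-algebra $R_0[1/r]$ first; its residue field is then finite, and the image of $R_0$ in this finite field is a finite integral domain, hence a field.
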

 
As an application to this theorem, one obtains,

\begin{cor}\label{cor2}
Each finitely generated subgroup $\Gamma < G$  which intersects the center of $G$ trivially is RF.
\end{cor}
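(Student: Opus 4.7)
The plan is to invoke Mal'cev's theorem after embedding $\Gamma$ into a matrix group. Since $G$ is a semisimple Lie group with Lie algebra $\mathfrak{g}$, the adjoint representation
\[ \mathrm{Ad} : G \longrightarrow GL(\mathfrak{g}) \cong GL(N,\mathbb{R}), \qquad N = \dim_\mathbb{R}\mathfrak{g}, \]
is a Lie group homomorphism whose kernel is precisely the center $Z(G)$. (This is standard: for a connected semisimple $G$ the kernel of $\mathrm{Ad}$ equals $Z(G)$; in general one may first pass to the identity component, which has finite index in $G$, since the center of $G$ contains the center of $G^0$ and $G$ has finitely many components in our setup.)

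Next, because $\Gamma\cap Z(G)=\{1\}$ by hypothesis, the restriction $\mathrm{Ad}|_\Gamma : \Gamma \to GL(N,\mathbb{R})$ is an injective homomorphism. Therefore $\Gamma$ is isomorphic to a finitely generated subgroup of $GL(N,\mathbb{R})$. By Mal'cev's theorem recalled just above, every finitely generated subgroup of $GL(N,R)$ (with $R$ a commutative ring with unity, in particular $R = \mathbb{R}$) is residually finite. Hence $\Gamma$ itself is RF.

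There is essentially no obstacle; the only mild point to check is that $\ker(\mathrm{Ad}) = Z(G)$ under our standing assumption on $G$ (semisimple, no compact factors, finite center). This is immediate for the identity component, and for the full group $G$ it suffices to note that $\mathrm{Ad}$ factors through $G/Z(G)$ faithfully because any element acting trivially on $\mathfrak{g}$ by conjugation is central. Once this identification is in place, the corollary is a one-line consequence of Mal'cev's theorem.
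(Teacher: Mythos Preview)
Your proposal is correct and matches the paper's own proof essentially verbatim: the paper simply observes that under the hypothesis the adjoint representation $\Gamma \to GL(\mathfrak{g})$ is faithful, and then Mal'cev's theorem applies. Your additional remarks about $\ker(\mathrm{Ad}) = Z(G)$ just make explicit what the paper leaves implicit.
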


\begin{proof}
Under our assumptions, the adjoint representation $\Gamma \r GL({\mathfrak g})$ 
 is faithful.
\end{proof}

For a subgroup $\Gamma < G$, we define the \emph{norm} of $\Gamma$ with respect to $x\in X$ as \[\| \Gamma \|_x = \inf\{ d(x,\gamma x) \mid 1_\Gamma \ne \gamma \in \Gamma\}.\]  Note that when $\|\Gamma\|_x > 0$, $\Gamma$ is discrete. Residual finiteness implies the following useful lemma which we use to obtain subgroups whose nontrivial elements send $x$  arbitrarily far: 

\begin{lem}\label{residual}
Let $\Gamma$ be a RF discrete 
subgroup of $G$. For any $R\in\mathbb{R}$, there exist a finite index subgroup $\Gamma' < \Gamma$ such that $\| \Gamma' \|_x \ge R$.
\end{lem}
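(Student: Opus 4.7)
The plan is to combine the discreteness of $\Gamma$ with the defining property of residual finiteness via a short two-step argument.

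First, I would observe that since $\Gamma$ acts properly discontinuously on $X = G/K$ (as a discrete subgroup of the Lie group $G$ with $K$ compact), the set
\[
S = \{\gamma \in \Gamma : 0 < d(x, \gamma x) < R\}
\]
is finite. Indeed, the set $\{g \in G : d(x, gx) \leq R\}$ is of the form $K \cdot C \cdot K$ for some compact $C \subset G$, hence compact, and its intersection with the discrete subgroup $\Gamma$ is finite. Thus $S$ is a finite subset of $\Gamma \setminus \{1_\Gamma\}$.

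Second, I would apply the first formulation of residual finiteness (condition (1) in the definition) directly to the finite set $S$: there exists a finite index subgroup $\Gamma' < \Gamma$ such that $\Gamma' \cap S = \emptyset$. Equivalently, one can proceed by picking for each $\gamma \in S$ a finite index subgroup $\Gamma_\gamma < \Gamma$ with $\gamma \notin \Gamma_\gamma$ (using condition (1) or (2) applied element-wise) and setting $\Gamma' = \bigcap_{\gamma \in S} \Gamma_\gamma$, which is finite index as a finite intersection of finite index subgroups.

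To conclude, any nontrivial element $\gamma \in \Gamma'$ lies in $\Gamma \setminus S$, so either $d(x, \gamma x) = 0$ or $d(x, \gamma x) \geq R$. The first alternative is impossible: if $\gamma x = x$, then $\gamma \in K \cap \Gamma$ (viewing $x$ as the identity coset), and since $\Gamma$ is discrete and $K$ is compact, $K \cap \Gamma$ is finite; however $\gamma x = x$ actually forces $\gamma = 1_\Gamma$ unless $\Gamma$ has torsion fixing $x$, in which case the finitely many such nontrivial fixators could themselves be removed by the same RF trick applied at the outset (by enlarging $S$ to include all nontrivial $\gamma$ with $d(x, \gamma x) < R$, where we now interpret $<R$ as $\le R$, including $0$; this set is still finite by proper discontinuity). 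Hence $\|\Gamma'\|_x \geq R$. No step presents a serious obstacle; the only subtlety is the minor care needed with potential point-stabilizers in $\Gamma$, which is handled by including them in the finite set whose elements are killed by passing to a finite index subgroup.
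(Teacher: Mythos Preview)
Your proposal is correct and follows essentially the same approach as the paper: use discreteness to conclude that the set of group elements moving $x$ by less than $R$ is finite, then invoke residual finiteness to pass to a finite index subgroup avoiding that finite set. The paper's proof is a two-line sketch that does not explicitly discuss point-stabilizers; your treatment of that edge case is more careful but not a genuinely different route.
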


\begin{proof}
Since $\Gamma$ is discrete, the set $\Phi = \{\gamma \mid d(x,\gamma x) < R\}$ is finite. The assertion follows from the residual finiteness property.
\end{proof}

Combining this lemma with Proposition \ref{prop1}, we get the following:

\begin{cor}\label{cor1}
Let $\Gamma< G$ be a RF $\tmod$-Morse subgroup, let $\Lambda'$ be any compact set in $\flag{\tmod}$ whose interior contains 
$\Lambda = \limflag{\Gamma}$, and let $\Theta'$ be any compact set containing $\Theta = \Theta_\Gamma$ in its interior. There exists $S_1>0$ such that for any $S\ge S_1$ there exists a finite index subgroup $\Gamma'$ of $\Gamma$ satisfying $\| \Gamma'\|_x > S$ which also satisfies the following:  For any $\g'\in \Gamma'$ exists $\tau \in \Lambda'$ for which $\g'x\in \VVt{x}$. 
\end{cor}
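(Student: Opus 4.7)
The proof is essentially a direct combination of Proposition \ref{prop1} and Lemma \ref{residual}, so I would proceed as follows.

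First, I would apply Proposition \ref{prop1} with the given data $\Lambda'$ and $\Theta'$ (noting that their interiors contain $\Lambda$ and $\Theta$ respectively) to obtain a threshold $S_1 > 0$ with the property that every $\gamma \in \Gamma$ with $d(x,\gamma x) > S_1$ satisfies $\gamma x \in \VVt{x}$ for some $\tau \in \Lambda'$. This $S_1$ is the one claimed in the statement.

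Next, for any $S \ge S_1$, I would invoke Lemma \ref{residual} to produce a finite index subgroup $\Gamma' < \Gamma$ with $\|\Gamma'\|_x > S$. For this step, I need $\Gamma$ to be discrete (so that the set $\{\gamma \in \Gamma : d(x,\gamma x) < S\}$ is finite) and residually finite. Discreteness is immediate because $\Gamma$ is $\tmod$-Morse (hence undistorted, in particular faithfully embedded with discrete orbit), and residual finiteness is a hypothesis.

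Finally, I would verify the cone condition for every $\gamma' \in \Gamma'$. If $\gamma' \neq 1_{\Gamma'}$, then $d(x,\gamma' x) \ge \|\Gamma'\|_x > S \ge S_1$, so by the choice of $S_1$ via Proposition \ref{prop1}, there exists $\tau \in \Lambda'$ with $\gamma' x \in \VVt{x}$. If $\gamma' = 1_{\Gamma'}$, then $\gamma' x = x$ is the tip of every $\Theta'$-cone based at $x$, so it lies in $\VVt{x}$ for any $\tau \in \Lambda'$ (and $\Lambda'$ is nonempty since it contains the nonempty limit set $\Lambda$, assuming $\Gamma$ is infinite; if $\Gamma$ is finite, the statement is vacuous or trivial for $\Gamma' = \{1\}$).

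There is no substantive obstacle here; the only small point of care is ensuring discreteness of $\Gamma$ so that Lemma \ref{residual} applies, which follows from the Morse hypothesis via undistortedness as recorded in section \ref{sec:morsesubgroups}.
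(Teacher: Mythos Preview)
Your proof is correct and follows exactly the approach the paper intends: the corollary is stated immediately after the sentence ``Combining this lemma with Proposition \ref{prop1}, we get the following,'' and no further argument is given. Your write-up simply makes this combination explicit, including the minor bookkeeping for the identity element and the observation that discreteness (needed for Lemma \ref{residual}) follows from the Morse hypothesis.
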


Now we briefly turn to the discussion of pairwise antipodal subgroups before proving our main theorem in the next section.

\begin{defn}[Antipodal Morse subgroups]\label{def:antipodal_morse}
A pair of $\tmod$-Morse subgroups $\Gamma_1,\Gamma_2 < G$ is called \emph{antipodal} if their flag limit sets in $\flag{\tmod}$ are antipodal.
\end{defn}

Let $\Gamma_1, \dots,\Gamma_n$ be pairwise antipodal, RF $\tmod$-Morse subgroups of ${G}$. Let $\Theta \subset \ost{\tmod}$ be a subset which contains the sets $\Theta_{\Gamma_i}$, for $i = 1,\dots,n$, in its interior.

\begin{prop}\label{prop315}
There exists a collection $\{\Lambda'_1,\dots,\Lambda'_n\}$ of pairwise antipodal, compact subsets of $\flag{\tmod}$, and a number $S_2>0$ such that for any $S\ge S_2$ there exists a collection of finite index subgroups $\Gamma_1', \dots,\Gamma_n'$ of $\Gamma_1, \dots,\Gamma_n$, respectively, satisfying $\| \Gamma_1' \|_x\ge S,\dots,\| \Gamma_n' \|_x \ge S$ which also satisfies the following: For each $i=1,\dots,n$, and for each $\g_i \in \Gamma'_i$, there exists $\tau_i\in\Lambda'_i$ such that
\[\gamma_i x \in V(x,\tst{\tau_i}).\]
\end{prop}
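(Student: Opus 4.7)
The plan is to reduce the assertion to $n$ simultaneous applications of Corollary \ref{cor1}, one for each subgroup $\Gamma_i$; the only new ingredient is a choice of compact sets $\Lambda'_i \subset \flag{\tmod}$ thickening the flag limit sets $\Lambda_i := \limflag{\Gamma_i}$ while retaining pairwise antipodality.

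For this topological thickening I would use the fact that antipodality is an open condition: the space $\antiflag$ is open in $\flag{\tmod}\times\flag{\tmod}$ (cf.\ Section \ref{estimates1}). Since the $\Gamma_i$ are pairwise antipodal, each compact product $\Lambda_i \times \Lambda_j$ with $i\ne j$ lies inside $\antiflag$. By the tube lemma, for each pair $(i,j)$ with $i\neq j$ there exist open neighborhoods $W_{ij} \supset \Lambda_i$ and $W_{ji}\supset \Lambda_j$ with $W_{ij}\times W_{ji}\subset \antiflag$. Setting $U_i := \bigcap_{j\ne i} W_{ij}$, we obtain open neighborhoods $U_i\supset\Lambda_i$ such that every $\tau_i\in U_i$ and $\tau_j\in U_j$ are antipodal whenever $i\ne j$. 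Since $\Lambda_i$ is compact and $U_i$ open, I would now pick a compact set $\Lambda'_i$ with $\Lambda_i\subset\Int{\Lambda'_i}\subset\Lambda'_i\subset U_i$ (for example, a closed $\varepsilon_i$-neighborhood of $\Lambda_i$ for small enough $\varepsilon_i$ in a fixed $G$-invariant metric on $\flag{\tmod}$). By construction, the family $\{\Lambda'_i\}_{i=1}^n$ is pairwise antipodal.

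With these $\Lambda'_i$ in hand, I would apply Corollary \ref{cor1} to each $\Gamma_i$, taking the ``$\Lambda'$'' there to be $\Lambda'_i$ and the ``$\Theta'$'' there to be $\Theta$; the hypotheses are satisfied because, by construction, $\Lambda_i\subset\Int{\Lambda'_i}$, and by assumption $\Theta_{\Gamma_i}\subset\Int{\Theta}$. This produces, for each $i$, a threshold $S_{1,i}>0$ such that for every $S\geq S_{1,i}$ there is a finite index subgroup $\Gamma'_i < \Gamma_i$ with $\|\Gamma'_i\|_x > S$ and such that every $\gamma_i\in\Gamma'_i$ satisfies $\gamma_i x \in V(x,\tst{\tau_i})$ for some $\tau_i\in\Lambda'_i$.

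Finally, taking $S_2 := \max_{1\le i\le n} S_{1,i}$ and, for any given $S\ge S_2$, applying the above simultaneously to each $i$ produces the required tuple $(\Gamma'_1,\dots,\Gamma'_n)$. The main (and essentially only) delicate point is the thickening construction in the second paragraph; everything else is bookkeeping on top of Corollary \ref{cor1}. The argument does not mix the different $i$'s, so no new Morse or antipodality estimate is needed beyond what has already been set up.
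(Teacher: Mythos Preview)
Your proposal is correct and follows essentially the same approach as the paper: reduce to Corollary \ref{cor1} applied to each $\Gamma_i$ once one has constructed pairwise antipodal compact thickenings $\Lambda'_i\supset\Lambda_i$. The only difference is cosmetic: the paper builds the thickenings inductively (enlarge $\Lambda_1$ against $\Lambda_2\cup\dots\cup\Lambda_n$, then $\Lambda_2$ against $\Lambda'_1,\Lambda_3,\dots,\Lambda_n$, etc.), whereas you do it in one shot by intersecting the tube-lemma neighborhoods $W_{ij}$; both rest on the openness of $\antiflag$ and the compactness of the $\Lambda_i$.
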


\begin{proof}
Once we show that there exists a collection $\{\Lambda'_1,\dots,\Lambda'_n\}$ such that, for each $i$, the interior of $\Lambda'_i$ contains the flag limit set $\Lambda_i$ of $\Gamma_i$, the first part of the proposition follows from the Corollary \ref{cor1}. We may construct $\Lambda'_1,\dots,\Lambda'_n$ as follows: 

\begin{lem} \label{flag_thickening}
Let $\{\Lambda_1,\dots,\Lambda_n\}$ be a collection pairwise antipodal, compact subsets of $\flag{\tmod}$. Then, there exists a collection $\{\Lambda'_1,\dots,\Lambda'_n\}$ of pairwise antipodal, compact subsets of $\flag{\tmod}$ such that each $\Lambda_i$ is contained in the interior of $\Lambda'_i$.
\end{lem}
\begin{proof}
The case $n=2$ can be proved as follows. Let $\Lambda_1,\Lambda_2$ be a pair of antipodal, compact subsets of $\flag{\tmod}$. Then,
\[ \Lambda_1 \times \Lambda_2 \underset{\mathrm{compact}}{\subset} \antiflag \underset{\mathrm{open}}{\subset} \flag{\tmod}\times\flag{\tmod}.\]
There is a open neighborhood of $\Lambda_1 \times \Lambda_2$ in $\antiflag$ of the form $U_1 \times U_2$. In particular, the subsets 
$U_1$ and $U_2$ are antipodal. Then any pair of compact subsets $\Lambda'_1$ and $\Lambda'_2$ of $U_1$ and $U_2$, respectively, containing $\Lambda_1$ and $\Lambda_2$ in their respective interiors, does the job.

We consider now the general case  $n\ge 3$ and let $\{\Lambda_1,\dots,\Lambda_n\}$ be a collection of subsets as in proposition. For $\Lambda_1$, using  the lemma, we find 
a compact neighborhood $\Lambda'_1$ of $\Lambda_1$ which is antipodal to 
the compact 
 \[\bigcup_{k=2}^n \Lambda_k.\]
Then, $\{\Lambda'_1,\Lambda_2,\dots,\Lambda_n\}$ is new collection pairwise antipodal, compact subsets of $\flag{\tmod}$. 
The same argument yields a compact neighborhood $\Lambda_2'$ of $\Lambda_2$ antipodal to $\Lambda_1', \Lambda_3,...,\Lambda_k$. 
We continue inductively. \end{proof}

This completes the proof of the proposition.
\end{proof}

%========================
\section{A combination theorem}\label{sec:combination}
In this section, we prove our main result.

\begin{thm}[Combination theorem]\label{main_result}
Let $\Gamma_1, \dots, \Gamma_n$ be pairwise antipodal, RF $\tmod$-Morse subgroups of $G$. Then, there exist finite index subgroups $\Gamma'_i < \Gamma_i$, for $i=1,\dots,n$, such that $\gen{\Gamma'_1, \dots, \Gamma'_n}$ is $\tmod$-Morse, and is naturally isomorphic to $\Gamma'_1*\dots* \Gamma'_n$
\end{thm}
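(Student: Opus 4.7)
The strategy is to realize the orbit map $o_x$ of the Cayley graph of the formal free product $\Gamma'_1 * \cdots * \Gamma'_n$ into $X$ as a uniform $\tmod$-Morse embedding, using the Schottky-type local-to-global approach. First, apply Proposition \ref{prop315} to obtain pairwise antipodal compact thickenings $\Lambda'_i \subset \flag{\tmod}$ of the flag limit sets $\limflag{\Gamma_i}$ together with, for each displacement threshold $S$ to be chosen later, finite-index subgroups $\Gamma'_i < \Gamma_i$ such that $\|\Gamma'_i\|_x \ge S$ and every nontrivial $\gamma \in \Gamma'_i$ satisfies $\gamma x \in V(x, \tst{\tau})$ for some $\tau \in \Lambda'_i$. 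Fix in addition a compact, $\iota$-invariant, Weyl-convex $\Theta' \subset \ost{\tmod}$ whose interior contains each $\Theta_{\Gamma_i}$.

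For a reduced word $w = \gamma_1 \cdots \gamma_k$ in $\Gamma'_1 * \cdots * \Gamma'_n$ (so $\gamma_j \in \Gamma'_{i_j} \setminus \{1\}$ and $i_j \ne i_{j+1}$), form the corner sequence $x_j = \gamma_1 \cdots \gamma_j \cdot x$ in $X$. The plan is to verify that this sequence is $(\Theta', \epsilon)$-straight and $l$-spaced in the sense required by Theorem \ref{seq_morse}. The spacing follows immediately from $d(x_{j-1}, x_j) = d(x, \gamma_j x) \ge S$, provided $S \ge l$. For straightness at an interior corner $x_j$, translation by the isometry $(\gamma_1 \cdots \gamma_j)^{-1}$ reduces matters to a lower bound for $\angle_x^\xi(\gamma_j^{-1} x, \gamma_{j+1} x)$; since $\gamma_j^{-1} x \in V(x, \tst{\sigma})$ and $\gamma_{j+1} x \in V(x, \tst{\tau})$ for some $\sigma \in \Lambda'_{i_j}$ and $\tau \in \Lambda'_{i_{j+1}}$ that are antipodal, this $\xi$-angle equals $\angle_x(\xi_\sigma, \xi_\tau)$, which we bound from below via Corollary \ref{D} and a \cat0 comparison in the spirit of Proposition \ref{tool}. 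Theorem \ref{seq_morse} then exhibits the piecewise geodesic through the corners $(x_j)$ as a uniform Morse quasigeodesic in $X$.

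Between consecutive corners the actual orbit trajectory --- namely the $(\gamma_1 \cdots \gamma_{j-1})$-translate of the $o_x$-image of a Cayley-graph geodesic in $\C{\Gamma'_{i_j}}$ from $1$ to $\gamma_j$ --- is itself a uniform Morse quasigeodesic with matching endpoints, since $\Gamma'_{i_j}$ inherits $\tmod$-Morseness from $\Gamma_{i_j}$ as a finite-index subgroup. By the generalized replacement lemma (Theorem \ref{replacements2}), swapping each geodesic segment $[x_{j-1}, x_j]$ for the corresponding orbit Morse quasigeodesic preserves the uniform Morse property, producing a uniform Morse quasigeodesic in $X$ that coincides with the $o_x$-image of a Cayley-graph geodesic in $\C{\Gamma'_1 * \cdots * \Gamma'_n}$ representing $w$. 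Since Morse subgroups of $G$ are hyperbolic and a free product of finitely many hyperbolic groups is hyperbolic, uniform quasigeodesics in the Cayley graph of the free product stay uniformly close to geodesics, and $o_x$ is a $\tmod$-Morse embedding. In particular, no nontrivial reduced word acts trivially on $x$, so the natural map $\Gamma'_1 * \cdots * \Gamma'_n \to \langle \Gamma'_1, \ldots, \Gamma'_n \rangle$ is an isomorphism and its image is $\tmod$-Morse.

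The main obstacle is the straightness verification at the corners. Unlike the endpoint $\xi$-angle bounds of Proposition \ref{tool}, which can be made arbitrarily small by enlarging $S$, the middle $\xi$-angle $\angle_x(\xi_\sigma, \xi_\tau)$ is an asymptotic quantity depending on the location of $x$ relative to the parallel set $P(\sigma, \tau)$ and is bounded below by $\pi - \phi$ for a positive constant $\phi = \phi(\Lambda'_i, x)$ that does not improve with $S$. To apply Theorem \ref{seq_morse} one must therefore select the target Morse output parameters coarse enough that the resulting straightness tolerance $\epsilon$ exceeds $\phi$, while still taking $S$ large enough to meet the spacing requirement and the $\Theta'$-regularity condition provided by Proposition \ref{tool}.
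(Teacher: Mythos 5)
Your overall architecture matches the paper's: pass to finite-index subgroups with large $x$-displacement and cone control via Proposition \ref{prop315}, build a sequence in $X$ from a reduced word, apply Theorem \ref{seq_morse} to see that the piecewise geodesic through that sequence is a uniform Morse quasigeodesic, and then invoke the generalized replacement lemma (Theorem \ref{replacements2}) to upgrade from corner-to-corner geodesic segments to actual orbit paths. That much is right, and the isomorphism and Morseness conclusions at the end are drawn in the same way.

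The gap is in the straightness verification, and you have actually located it yourself but then proposed a workaround that does not hold up. You build the \emph{corner} sequence $x_j=\gamma_1\cdots\gamma_j\cdot x$ and try to show it is $(\Theta',\epsilon)$-straight; as you observe, the hinge angle at an interior corner is $\angle_x(\xi_\sigma,\xi_\tau)$, a quantity depending only on $\sigma,\tau\in\Lambda'_{i}$ and on the base point $x$, which is bounded below by some $\pi-\phi$ with $\phi=\phi(\Lambda'_\bullet,x)>0$ but does not tend to $\pi$ as $S\to\infty$, since it does not depend on $S$ at all. Your proposed fix --- to take the output Morse parameters coarse enough that Theorem \ref{seq_morse} tolerates straightness $\epsilon>\phi$ --- is not available. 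In that theorem, $\epsilon$ (and $l$) are small constants \emph{produced} by the theorem once $\Theta,\Theta',D$ are given, not parameters one may inflate by enlarging $D$; a sequence of uniformly regular segments whose hinge angles equal $\pi-\phi$ for a fixed $\phi$ beyond the critical threshold will in general drift arbitrarily far from any single parallel set, no matter how large a neighborhood one is willing to accept. So the corner sequence simply does not satisfy the hypotheses of the local-to-global theorem for large $S$.

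The paper's proof sidesteps this by replacing the corners with the \emph{midpoint} sequence $(m_r)$, where $m_r$ is the midpoint of $p_{r-1}p_r$. After translating a hinge $p_{r-1}p_rp_{r+1}$ to $(\gamma_i x)(x)(\gamma_j x)$, the relevant midpoints $m'_i,m'_j$ satisfy $d(x,m'_i),d(x,m'_j)\ge S/2$, so Proposition \ref{tool} applies: $m'_im'_j$ is $\Theta'$-regular for $S/2\ge R_1$, and the $\xi$-angles $\angle^\xi_{m'_i}(x,m'_j)$, $\angle^\xi_{m'_j}(x,m'_i)$ are at most $f_1(S/2)$, which tends to $0$ as $S\to\infty$. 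This is exactly what the corner sequence lacks: the midpoints recede from $x$ with $S$, so the hinge angles at the midpoints approach $\pi$ and one can meet whatever $\epsilon$ Theorem \ref{seq_morse} demands. The spacing estimate $(S\sin\alpha)/2-4D$ for the midpoint sequence also falls out of inequality (\ref{4D}). If you replace your corner sequence by the midpoint sequence and run your argument through Proposition \ref{tool}, the rest of your plan goes through essentially as written.
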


\begin{proof}
We first fix our notations. We denote the $\tmod$-flag limit sets of $\Gamma_1,\dots, \Gamma_n$ by $\Lambda_1,\dots, \Lambda_n$, respectively. Let $\Theta\subset \ost{\tmod}$, let $\{\Lambda'_1,\dots, \Lambda'_n\}$ be a collection of compact, pairwise antipodal subsets in  $\flag{\tmod}$, and let $S_2>0$ be as in  Proposition \ref{prop315}. As always, the point $x$ will be treated as a fixed base point in $X$. Finally, $\Theta \subset \Theta' \subset \Theta''$ are \iinv s of $\ost{\tmod}$ such that
\[ \Int{\Theta''} \supset \Theta',\quad \Int{\Theta'} \supset \Theta.\] 

By Proposition \ref{prop315}, for each $S>S_2$ there exist finite index subgroups $\Gamma'_1,\dots,\Gamma_n'$ of $\Gamma_1,\dots, \Gamma_n$, respectively,  of norms $\|\Gamma_i'\|_x \ge S$, such that for each $i=1,\dots,n$, and each $\gamma_i \in \Gamma_i'$,
\begin{equation}\label{4.5.0}
\gamma_i x \in V(x,\tst{\tau_i}),
\end{equation}
 for some $\tau_i\in\Lambda'_i$.
Let $A_i$ be a finite generating set of $\Gamma'_i$, for each $i = 1,\dots,n$. This choice endows each $\Gamma_i'$ with a word metric, and thus yields a $\Theta$-Morse embedding $o_x^i : \C{\Gamma'_i,A_i} \rightarrow X$ induced by the orbit map $\Gamma'_i \rightarrow \Gamma'_i x$. We take the standard generating set $A = A_1\cup \dots\cup A_n$ of the abstract free product $\Gamma' = \Gamma'_1 *\dots*\Gamma'_n$.
We obtain a natural homomorphism $\varphi: \Gamma' \rightarrow G$. 
When $S$ sufficiently large we prove that $o_x: \C{\Gamma',A} \rightarrow X$ is a $\Theta'$-Morse embedding, i.e. we prove that the geodesics of $\C{\Gamma',A}$ are mapped to uniform Morse quasigeodesics in $X$. This not only will prove that $\varphi$ is injective, 
but also will show that the subgroup $\gen{\Gamma'_1, \dots, \Gamma'_n}$ of $G$ generated by $\Gamma'_1, \dots,\Gamma_n'$ is $\tmod$-Morse.

\begin{claim}
There exists $S_0> 0$ such that if $S \ge S_0$, then the map $o_x: \C{\Gamma',A} \rightarrow X$ sends (finite) geodesics to uniform Morse quasigeodesics.
\end{claim}

\begin{proof}
Given any $\gamma\in \Gamma'$, there is a natural embedding of $\C{\Gamma_i'}$ into $\C{\Gamma'}$ given by the right multiplication map $\gamma_i \mapsto \gamma_i\gamma$. Any geodesic in $\C{\Gamma'}$ is a concatenation of paths which are images of the geodesics under the maps above. By equivariance, it suffices to study the geodesics in $\Gamma'$ starting at $1_{\Gamma'}$. Any geodesic $\psi$ with starting point $1_{\Gamma'}$ in $\C{\Gamma'}$ can be written as
\begin{equation}\label{4.5.1}
\psi~:~1_{\Gamma'},~ \gamma_{k_1},~\gamma_{k_2}\gamma_{k_1},\dots,
\end{equation}
where the path joining $\gamma_{k_r}\gamma_{k_{r-1}}\dots\gamma_{k_1}$ and $\gamma_{k_{r-1}}\dots\gamma_{k_1}$ in $\C{\Gamma'}$ is the image of a geodesic segment in $\C{\Gamma'_i}$ connecting the identity to $\gamma_{k_r}$ under the map $(\cdot) \mapsto (\cdot) (\gamma_{k_{r-1}}\dots\gamma_{k_1})$, assuming that $\gamma_{k_r} \in\Gamma'_i$. We group together $\gamma_{k_r}$'s in above to avoid two consecutive ones coming from same $\Gamma_i$'s.

The (finite) sequence (\ref{4.5.1}) is mapped to $x,~ \gamma_{k_1} x,~\gamma_{k_2}\gamma_{k_1} x, \dots$ under the map $o_x$; to avoid cumbersome notations, we denote $\gamma_{k_r}\gamma_{k_{r-1}}\dots\gamma_{k_1}$ by $g_r$, denote $\gamma_{k_r}\gamma_{k_{r-1}}\dots\gamma_{k_1} x$ by $p_r$ and assume that  the index $r$ of this sequence varies between $0$ and $r_0$. Using these notations, we have
\begin{equation}\label{4.5.2}
g_r = \gamma_{k_r} g_{r-1}, \quad g_r x = p_r.
\end{equation}
  Let $m_1 = p_0$, $m_{r_0} = p_{r_0}$, and, for $2\le r\le r_0 -1$, let $m_r$ denote the midpoint of $p_{r-1}$ and $p_r$ (see Figure \ref{fig:combination1}).

It follows from (\ref{4.5.0}) that all the segments $p_{r-1} p_r$ in $X$ are $\Theta$-regular and have length at least $S$. Moreover, it follows from (\ref{4.5.2}) that, for any $1\le r \le r_0 -1$, precomposing the right multiplication action $g_r^{-1} \curvearrowright \Gamma$ with $o_x$ maps the hinge $p_{r-1} p_r p_{r+1}$ to $(\gamma_{k_r}^{-1} x) (x) (\gamma_{k_{r+1}}x)$ which is of the form $(\gamma_i x)(x)(\gamma_j x)$, for some $\gamma_i \in\Gamma'_i$, $\gamma_j \in\Gamma'_j$, $i\ne j$. From (\ref{4.5.0}), we get that $\gamma_i x \in V(x,\tst{\tau_i})$ and $\gamma_j x \in V(x,\tst{\tau_j})$, for some $\tau_i \in\Lambda_i'$ and $\tau_j \in \Lambda_j'$. To simplify our notation, the corresponding images of $m_r$ and $m_{r+1}$ are denoted by $m'_i$ and $m'_j$, respectively.

\begin{figure}[h]
\begin{center}
\vspace{.2in}
\begin{tikzpicture}
    \node[anchor=south west,inner sep=0] (image) at (0,0,0) {\includegraphics[width=5in]{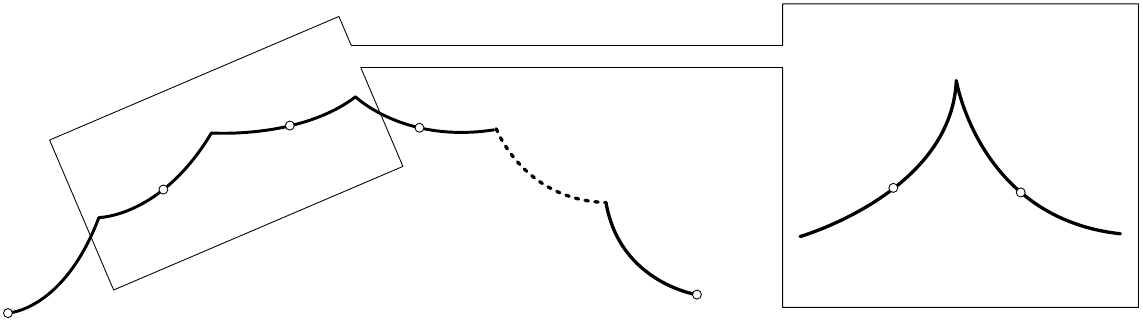}};
    \begin{scope}[x={(image.south east)},y={(image.north west)}]
        \node at (-0.015,0.0) {$p_0$};
        \node at (0.087,.376) {$p_1$};
        \node at (0.18,.653) {$p_2$};
        \node at (0.305,.76) {$p_3$};
        \node at (0.45,.66) {$p_4$};
        \node at (0.549,.445) {$p_{r_0-1}$};
        \node at (0.638,.09) {$p_{r_0}$};
        \node at (0.5,.93) {$\scriptstyle g^{-1}_2$};
        \node at (0.84,.8) {$x$};
        \node at (0.71,.2) {$\gamma_i x$};
        \node at (0.973,.21) {$\gamma_j x$};
        \node at (0.778,.51) {$m_i'$};
        \node at (0.92,.47) {$m_j'$};
    \end{scope}
\end{tikzpicture}
\vspace{.2in}\end{center}
\captionsetup{width=0.7\textwidth}
\caption{Morse embedding of quasigeodesics. The hollow points represent the midpoint sequence $(m_i)$.}
\label{fig:combination1}
\end{figure}

Let $D = \max\{D(\Lambda'_i,\Lambda'_j,x) \mid 1 \le i< j \le n\}$, where $D(\Lambda'_i,\Lambda'_j,x)$ is the constant given by Corollary \ref{D}. Moreover, let $R_1(x,\Lambda'_i,\Lambda'_j,\Theta',\Theta)$ and $f_1 (x,\Lambda'_i,\Lambda'_j,\Theta',\Theta,\xi)$ be the quantities as in Proposition \ref{tool}. Define
\[
 R_1 = \max_{i,j,~{i\ne j}} \left\{R_1(x,\Lambda'_i,\Lambda'_j,\Theta',\Theta)\right\},
\]
and
\[
 f_1 = \max_{i,j,~{i\ne j}} \left\{ f_1(x,\Lambda'_i,\Lambda'_j,\Theta',\Theta,\xi)\right\}.
\]
Note that $d(x,m'_i),d(x,m'_j) \ge S/2$. Using part \ref{tool2} of Proposition \ref{tool}, when $S/2\ge R_1$, then $m'_i m'_j$ is $\Theta'$-regular. Using part \ref{tool3} of the same proposition we get
\[\angle_{m'_i}^\xi(x,m'_j), \angle_{m_j}^\xi(x,m'_i) \le f_1(S/2).\]
Moreover, using and (\ref{4D}), we obtain
\[ d(m'_i,m'_j) \ge (S\sin\alpha)/2 - 4 D,\]
where $\alpha = \titsangle{(\Theta,\partial\T')}$. 
Therefore, when $S \ge 2R_1$, the sequence $(m_r)$ is $\left(\Theta', 2f_1(S/2)\right)$-straight and $\left((S\sin\alpha)/2 - 4 D\right)$-spaced.

For any $\delta'>0$, Theorem \ref{seq_morse} applied to $\Theta'$, $\Theta''$ and $\delta'$ concludes that there exists $S_0\gg R_1$ such that when $S\ge S_0$, then the sequence $(m_r)$ is $\delta'$-close to a parallel set $P(\tau_-,\tau_+)$ such that the nearest point projection map sends $m_r m_{r+1}$ to a $\Theta''$-longitudinal segment. This proves that the piecewise geodesic path $p_0p_1\dots p_{r_0}$ is a  uniform Morse quasigeodesic for sufficiently small $\delta'$.

Finally, we  prove that $o_x\circ \psi$ is uniformly Morse. By invoking the Morse property of $\Gamma'_i$'s, we get that each segment of $o_x\circ \psi$ connecting a consecutive pair $p_r$ and $p_{r+1}$ is a uniform Morse quasigeodesic. Therefore, $o_x\circ \psi$ is obtained by replacing the geodesic segments $p_rp_{r+1}$ of the path $p_0p_1\dots p_{r_0}$ by uniform Morse quasigeodesics. From the generalized replacement lemma (Theorem \ref{replacements2}), it follows that $o_x\circ\psi$ is also a uniform Morse quasigeodesic.
\end{proof}

This completes the proof of the theorem.
\end{proof}

\begin{rem}
The RF condition in the above theorem can be relaxed by integrating the content of Corollary \ref{cor2} into the hypothesis. Precisely, instead of requiring $\Gamma_i$'s to be RF one may require that $\Gamma_i$'s intersect the center of $G$ trivially. When $G\cong \isomid{X}$, this happens automatically, because $\isomid{X}$ is centerless.
\end{rem}

\medskip
Below is a  more general form of Theorem \ref{main_result} which does not involve passing to finite index subgroups, but  instead 
requires  ``sufficient antipodality and sparseness'' of the subgroups $\Gamma_i$. Let 
\[
(\underbrace{\flag{\tmod}\times ... \times \flag{\tmod}}_{n \text{ times}})^{\mathrm{opp}}
\]
denote the subset of $(\flag{\tmod})^n$ consisting of $n$ tuples of pairwise antipodal flags. For a subset 
\[
A\subset (\underbrace{\flag{\tmod}\times ... \times \flag{\tmod}}_{n \text{ times}})^{\mathrm{opp}}
\]
and for $x\in X$ define the subset $O_{A,x}\subset X^n$ consisting of $n$-tuples $(x_1,...,x_n)$ such that 
for some $(\tau_1,...,\tau_n)\in A$, we have $x_i\in V(x, \st{\tau_i})$, $i=1,...,n$.

\begin{thm}\label{thm:general}
For each compact 
\[
A\subset (\underbrace{\flag{\tmod}\times ... \times \flag{\tmod}}_{n \text{ times}})^{\mathrm{opp}},
\]
and $\Theta\subset \ost{\tmod}$, there exists a constant $S=S(A,\Theta,x)$ such that the following holds. Let $\Gamma_1,...,\Gamma_n$ be $P$-Anosov subgroups of $G$ such that:

a. $\| \Gamma_i \|_x\ge S$, $i=1,...,n$. 

b. For each $\gamma_i\in \Gamma_i, i=1,...,n$, the segment $x\gamma_i(x)$ is $\Theta$-regular.  

c. For each $n$-tuple of nontrivial elements $\gamma_i\in \Gamma_i -\{1\}$, $i=1,...,n$, 
 we have 
$$
(\gamma_1(x),...,\gamma_n(x))\in O_{A,x}. 
$$

Then the subgroup of $G$ generated by $\Gamma_1,...,\Gamma_n$ is $P$-Anosov, and is naturally isomorphic to the free product $\Gamma_1 * ... * \Gamma_n$. 
\end{thm}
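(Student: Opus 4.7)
The plan is to run the argument of Theorem \ref{main_result} essentially verbatim, with hypotheses (a), (b), (c) serving as a drop-in replacement for the conclusion of Proposition \ref{prop315}. Residual finiteness and passage to finite-index subgroups are no longer needed: ``sufficiently high displacement'' is now a direct hypothesis, and the antipodality data $A$ (together with the $\Theta$-regularity of orbit segments supplied by (b)) plays the role of the compact pairwise antipodal collection $\{\Lambda_1', \ldots, \Lambda_n'\}$ produced there.

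First, I would manufacture the antipodal compact data from $A$. For each pair $i\ne j$ set $A_{ij} = \pi_{ij}(A) \subset (\flag{\tmod} \times \flag{\tmod})^{\mathrm{opp}}$ and $\Lambda_i' = \pi_i(A)\subset \flag{\tmod}$; all of these are compact, and $A_{ij}$ lies in the antipodal set. Hypothesis (c) implies that for any $\gamma_i \in \Gamma_i \setminus \{1\}$ and $\gamma_j \in \Gamma_j \setminus \{1\}$ (filling in arbitrary non-trivial elements of the other factors) there exists $(\tau_i, \tau_j) \in A_{ij}$ with $\gamma_i x \in V(x, \st{\tau_i})$ and $\gamma_j x \in V(x, \st{\tau_j})$; the same applies with $\gamma_i^{-1}$ in place of $\gamma_i$, since $\gamma_i^{-1} \in \Gamma_i \setminus \{1\}$. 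Fixing an auxiliary \iinv\ $\Theta' \subset \ost{\tmod}$ with $\Theta \subset \Int{\Theta'}$, apply Proposition \ref{tool} to each pair $(\Lambda_i', \Lambda_j')$ and take the maximum of the resulting $R_1$'s and the pointwise maximum of the $f_1$'s to obtain uniform constants $R_1$, $f_1$.

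Next, I would repeat the orbit-sequence and midpoint construction from the proof of Theorem \ref{main_result}. Given a reduced word $s_r \cdots s_1$ in $\Gamma_1 * \cdots * \Gamma_n$ with $s_k \in \Gamma_{i_k}\setminus\{1\}$ and $i_k \ne i_{k+1}$, form $p_0 = x$, $p_k = s_k \cdots s_1 x$, and let $m_k$ be the midpoint of $p_{k-1}p_k$ for interior indices, with $m_1 := p_0$ and $m_{r+1} := p_r$. Translating each hinge $(p_{k-1}, p_k, p_{k+1})$ by $(s_k \cdots s_1)^{-1}$ gives the hinge $(s_k^{-1} x, x, s_{k+1} x)$, which lies in cones over an antipodal pair by the previous paragraph. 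Applying both parts of Proposition \ref{tool} then shows, exactly as in the proof of Theorem \ref{main_result}, that $(m_k)$ is $(\Theta', 2f_1(S/2))$-straight and $\left((S \sin\alpha)/2 - 4D\right)$-spaced, where $\alpha = \titsangle(\Theta, \partial \Theta')$ and $D$ is the projection constant from Corollary \ref{D}. Choosing the threshold $S(A, \Theta, x)$ large enough that these parameters meet the Morse criterion of Theorem \ref{seq_morse} (with some auxiliary $\Theta'' \supset \Theta'$), the polygonal path $p_0 p_1 \cdots p_r$ is uniformly Morse.

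The finale is the replacement step. Each $\Gamma_i$ is $P$-Anosov, hence $\tmod$-Morse by Theorem \ref{equiv_RCA}, so the orbit map $\C{\Gamma_i} \to X$ is already a Morse embedding, uniformly in $i$. The image under $o_x$ of any Cayley graph geodesic of $\gen{\Gamma_1, \ldots, \Gamma_n}$ is obtained from the polygonal path just shown to be Morse by replacing each segment $p_{k-1}p_k$ with the $o_x^{i_k}$-image of the corresponding geodesic in $\C{\Gamma_{i_k}}$. The generalized replacement lemma (Theorem \ref{replacements2}) then guarantees the concatenation is again a uniform Morse quasigeodesic. Since Morse quasigeodesics are quasi-isometric embeddings, the natural map $\Gamma_1 * \cdots * \Gamma_n \to G$ is injective and its image subgroup is $\tmod$-Morse, i.e.\ $P$-Anosov. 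The main obstacle I anticipate is purely bookkeeping: calibrating a single $S = S(A, \Theta, x)$ so that the thresholds coming from Proposition \ref{tool}, Theorem \ref{seq_morse}, and the replacement lemma all cooperate. No new geometric ideas beyond those in the proof of Theorem \ref{main_result} appear to be required.
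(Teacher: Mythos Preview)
Your proposal is correct and matches the paper's own proof, which consists of two sentences: hypotheses (a)--(c) replace the conclusion of Proposition \ref{prop315}, so no passage to finite-index subgroups is needed, and the rest of the proof of Theorem \ref{main_result} goes through unchanged. One minor caution on bookkeeping: Proposition \ref{tool} as stated requires $\Lambda_1,\Lambda_2$ to be \emph{antipodal} compact subsets, whereas your projections $\Lambda_i'=\pi_i(A)$, $\Lambda_j'=\pi_j(A)$ need not be mutually antipodal; since antipodality enters the proof of Proposition \ref{tool} only via Corollary \ref{D} (compactness in $\mathcal X$), you should apply it over the compact set $A_{ij}\subset\mathcal X$ you already defined rather than over $\Lambda_i'\times\Lambda_j'$.
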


\begin{proof}
The proof is very similar to the one of Theorem \ref{main_result}. The conclusion of Proposition \ref{prop315} 
now becomes a hypothesis on the subgroups $\Gamma_i$, so no passage to finite index subgroups is required. Hence, the rest 
of the proof of Theorem \ref{main_result} goes through. 
\end{proof}

\begin{rem}
We should note that this theorem is in the spirit of the ``quantitative ping-pong lemma'' of Breuillard and Gelander, see \cite[Lemma 2.3]{BG}. 
\end{rem}

\medskip
As a last remark, we note that the traditional Klein-Maskit combination theorems are stated not in terms of group actions on 
symmetric spaces but in terms of their actions on the sphere at infinity; they also do not involve passing to finite index subgroups. The following is a reasonable combination conjecture in the setting of Anosov subgroups:

\begin{conj}
Let $A_1,...,A_n\subset \flag{\tmod}$ be nonempty compact subsets such that any two distinct elements of 
$$
A:= \bigcup_{i=1}^n A_i
$$
are antipodal. Suppose that $\Gamma_1,...,\Gamma_n$ are $P_{\tmod}$-Anosov subgroups of $G$ such that for all $i=1,...,n$ and all 
$\gamma\in \Gamma_i-\{1\}$ we have
$$
\gamma(A -A_i)\subset \Int{A_i}. 
$$
Then the subgroup $\Gamma$ of $G$ generated by $\Gamma_1,...,\Gamma_n$ is $P_{\tmod}$-Anosov. 
\end{conj}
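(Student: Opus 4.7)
The plan is to work in the Morse framework (via Theorem~\ref{equiv_RCA}) and adapt the architecture of the proof of Theorem~\ref{main_result}. Each $\Gamma_i$ is $\tmod$-Morse, and its flag limit set $\Lambda_i$ is contained in $A_i$, since attracting fixed flags of $\gamma\in\Gamma_i-\{1\}$ lie in $\Int{A_i}$ and $\Lambda_i$ is the closure of such flags. The free-product structure follows from classical ping-pong: for any reduced word $g=g_k\cdots g_1$ with $g_j\in\Gamma_{i_j}-\{1\}$ and $i_{j+1}\neq i_j$, iterating the hypothesis yields $g(A-A_{i_1})\subset\Int{A_{i_k}}$, so $g$ acts nontrivially on $\flag{\tmod}$ and hence $g\neq 1$ in $G$. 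Thus $\Gamma=\gen{\Gamma_1,\dots,\Gamma_n}\cong\Gamma_1*\cdots*\Gamma_n$.

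For the $\tmod$-Morse property of $\Gamma$ itself, I would mimic the proof of Theorem~\ref{main_result}: decompose any geodesic in $\C{\Gamma}$ through syllable vertices $1,g_1,g_2g_1,\ldots$, observe that each syllable orbit is a uniform Morse quasigeodesic in $X$ (by Morseness of the relevant $\Gamma_{i_j}$), and invoke the generalized replacement lemma (Theorem~\ref{replacements2}) to reduce to proving that the broken-geodesic path through the orbit vertices $p_r:=g_r\cdots g_1\cdot x$ is uniformly Morse. By the Morse lemma for straight-spaced sequences (Theorem~\ref{seq_morse}), this in turn reduces to showing that (a midpoint refinement of) the sequence $(p_r)$ is uniformly $(\Theta',\epsilon)$-straight and $l$-spaced for constants depending only on the initial data.

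The main obstacle, and the key departure from the proof of Theorem~\ref{main_result}, is the absence of a uniform displacement bound $\|\Gamma_i\|_x\ge S$: one cannot pass to finite-index subgroups in the conjectural setting, and elements of $\Gamma_i-\{1\}$ may have arbitrarily small displacement at $x$ (small translation length combined with large rotational part is not excluded by the hypothesis). The intended substitute is to exploit the strict interior in $\gamma(A-A_i)\subset\Int{A_i}$. Pick compact antipodal thickenings $\Lambda'_i\supset A_i$ via Lemma~\ref{flag_thickening}; by compactness of $A-A_i$ there is a positive Hausdorff gap between $\gamma(A-A_i)$ and $\partial A_i$, from which a variant of Proposition~\ref{prop315} yields $\gamma x\in V(x,\ttst{\tau})$ for some $\tau\in\Lambda'_i$ whenever $d(x,\gamma x)$ exceeds a threshold. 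The finitely many short elements of each $\Gamma_i$ (finite by discreteness) are absorbed by aggregating consecutive syllables into \emph{blocks} of bounded word length until cumulative block-displacement is controlled from below; within a block the Morse property of $\Gamma_{i_j}$ takes over, while between blocks the antipodality of the $\Lambda'_i$'s combined with Proposition~\ref{tool} delivers straight-spacedness at the block scale. Theorem~\ref{seq_morse} then applies to the block-decimated sequence, and Theorem~\ref{cd} transfers the Morse property from the block-decimated sequence to the full broken geodesic.

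The hardest step will be making the block aggregation quantitatively uniform across all reduced words, so that the block size, the straightness $\epsilon$, and the spacing $l$ depend only on the initial data $(A,\Gamma_1,\ldots,\Gamma_n,x)$ and not on the particular word. An alternative strategy bypasses blocks entirely by constructing a $\Gamma$-equivariant antipodal boundary map $\partial\Gamma\to\flag{\tmod}$ directly from the nested ping-pong sets $g_k\cdots g_1(A-A_{i_1})\subset\Int{A_{i_k}}$ and invoking the asymptotic-embedding characterization of $\tmod$-Morse subgroups; however this still demands quantifying uniform shrinking of these nested images, which is essentially the same obstacle in a different guise.
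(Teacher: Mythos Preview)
The statement you are attempting to prove is stated in the paper as a \emph{conjecture}, not a theorem: the paper gives no proof of the $P_{\tmod}$-Anosov conclusion, and only remarks (citing Tits) that the free-product structure follows from classical ping-pong. So there is no proof in the paper to compare your attempt against; you are proposing an attack on an open problem.

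Your outline correctly isolates the obstruction that separates the conjecture from Theorem~\ref{main_result}/\ref{thm:general}: the absence of any lower bound on $\|\Gamma_i\|_x$. However, the proposed ``block aggregation'' fix has a circularity that you do not fully acknowledge. You want to merge consecutive short syllables into blocks ``until cumulative block-displacement is controlled from below''; but the assertion that a bounded-length product of syllables from alternating $\Gamma_{i_j}$'s has displacement bounded below by a definite constant is \emph{exactly} the quasi-isometric embedding statement you are trying to prove. Nothing in the ping-pong hypothesis on $\flag{\tmod}$ translates directly into a displacement bound in $X$: a nontrivial $\gamma\in\Gamma_i$ can satisfy $\gamma(A-A_i)\subset\Int{A_i}$ while moving $x$ an arbitrarily small distance, and products of such elements could a priori exhibit near-cancellation in $X$ even though the word in $\Gamma_1*\cdots*\Gamma_n$ is reduced. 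Your appeal to ``finitely many short elements in each $\Gamma_i$'' does not help, because a block is a product across \emph{different} $\Gamma_i$'s and lies in none of them, so neither the Morse property of a single $\Gamma_i$ nor Proposition~\ref{prop1} applies to it.

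Your alternative boundary-map strategy is more promising in spirit, but as you note it faces the same quantitative issue: the hypothesis $\gamma(A-A_i)\subset\Int{A_i}$ gives no uniform contraction rate, so the nested images along an infinite reduced word need not shrink to a point, and even if they do, antipodality of distinct limit points does not follow without further work. In short, your proposal accurately diagnoses why the conjecture is harder than Theorem~\ref{main_result}, but neither route you sketch closes the gap; this is consistent with the paper leaving the statement as a conjecture.
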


Note that under the above assumptions, $\Gamma$ is naturally isomorphic to the free product $\Gamma_1*...*\Gamma_n$, see e.g. \cite{tits1972free}.

%========================
\bibliographystyle{amsalpha}
\providecommand{\bysame}{\leavevmode\hbox to3em{\hrulefill}\thinspace}
\providecommand{\MR}{\relax\ifhmode\unskip\space\fi MR }
\providecommand{\MRhref}[2]{
  \href{http://www.ams.org/mathscinet-getitem?mr=#1}{#2}
}
\providecommand{\href}[2]{#2}

\vspace{2cm}

\Addresses

\end{document}